% The following directions are for 2p to one printing, 110% size
%\hoffset=-2.3cm
%\voffset=-2.3cm
\hoffset=-1.2cm
\voffset=-1.2cm
%Version of Sept 28.2002, updated March, 2003, after referee comments.
%(submitted to J. Alg Nov. 02)
%&latex
%\documentclass{amsart}
\documentclass{article}
\usepackage{amsmath,latexsym,amssymb,amsthm,enumerate,amscd}
\setlength{\topmargin}{0.0in}
\setlength{\textheight}{21.5cm}
\setlength{\evensidemargin}{0.35in}
\setlength{\oddsidemargin}{0.35in}
\setlength{\headsep}{0.1cm}
\setlength{\textwidth}{15.0cm}
\setlength{\parindent}{0.6cm}
\setlength{\unitlength}{1mm}

\theoremstyle{plain}
\newtheorem{theorem}{Theorem}[section]

\newtheorem{corollary}[theorem]{Corollary}
\newtheorem{lemma}[theorem]{Lemma}
\theoremstyle{definition}
\newtheorem{definition}[theorem]{Definition}

\newtheorem{remark}[theorem]{Remark}
\newtheorem{example}[theorem]{Example}

%following is from Kanev-I book
\newtheorem{thm}{Theorem}

\theoremstyle{plain}

\theoremstyle{definition}

%\newtheorem{exa}[thm]{Example}
%\newtheorem{problem}[thm]{Problem}
%\theoremstyle{remark}

 %TTT
%\newtheorem*{uquest}{Question} %TTT
%\newtheorem*{note}{Note} %TTT
\newtheorem*{ack}{Acknowledgment}
%\newtheorem{note}[thm]{Note} %TTT

%\numberwithin{section}{chapter}
\numberwithin{equation}{section}
\numberwithin{table}{section} %TTT
\setcounter{secnumdepth}{2}
\setcounter{tocdepth}{2}

\def\AFG{\mathrm{AFG}}

\def\cha{\mathrm{char}\ }

\def\Hilb{\mathrm{Hilb}}

\providecommand{\bysame}{\makebox[3em]{\hrulefill}\thinspace}

\def\<{\left<}
\def\>{\right>}

\def\Ker{\mathrm{Ker}}

\def\Sym{\mathrm{Sym}}

\def\ns{\footnotesize \it}

\def\rk{\mathrm{rank}}

\def\max{\mathrm{max}}
\title{Pairs of commuting nilpotent matrices, and Hilbert function}
\author{Roberta Basili\footnote{The first author was supported partially by
University of Perugia and the Italian G.N.S.A.G.A. during her visit of summer 2003 to the
Mathematics Department of Northeastern University; and by an NSF grant 
 of J. Weyman during her visit in summer 2006.}\\[.05in]
{\ns  Via dei Ciclamini 2B, 06126 Perugia, Italy.
%email:iarrobin@neu.edu
}\\[.2in]
Anthony Iarrobino\\[.05in]
{\ns Department of Mathematics,  Northeastern University, Boston, MA 02115, USA.
%email:iarrobin@neu.edu
}\\[.2in]}

\date{February 6, 2008}
\begin{document}
\maketitle
\begin{abstract} Let $K$ be an infinite field. There has been substantial recent study of the family 
 $\mathcal H(n,K)$ of pairs of commuting nilpotent $n\times n$ matrices, relating this family to the fibre $H^{[n]}$ of the punctual Hilbert scheme 
$A^{[n]}=\Hilb^n(\mathbb A^2)$ over the point $np$ of the symmetric product $\Sym^n(\mathbb A^2)$, where $ p$ is a point of 
the affine plane $ \mathbb A^2$ \cite{Bar,Bas1,Prem}. In this study a pair of commuting nilpotent matrices $(A,B)$ is related to an Artinian algebra
 $K[A,B]$. There has also been substantial study of the stratification of the local punctual Hilbert scheme $H^{[n]}$ 
by the Hilbert function \cite{Br,I1,ElS,Ch,Go,Gu,Hui,KW,IY,Yama,Yamb}. However these studies have been hitherto separate. \par 
We first determine the stable partitions: i.e. those for which $P$ itself is the
partition $Q(P)$ of a generic nilpotent element of the centralizer of the Jordan nilpotent matrix $J_P$. We then explore the relation between $\mathcal H(n,K)$ and its stratification by the Hilbert function
of $K[A,B]$. Suppose that $\dim _K K[A,B]=n$, and that $K$ is algebraically closed of characteristic 0 or large enough $p$.  We show that a generic element of the pencil $A+\lambda B, \lambda\in K$ has Jordan partition the maximum
partition $P(H)$ whose diagonal lengths are the Hilbert function $H$ of $K[A,B]$. 
These results were announced in the talk notes \cite{I4}, and have been used by T. Ko{\v{s}}ir and
P.~Oblak in their proof that $Q(P)$ is itself stable \cite{KO}.  
\end{abstract}
\section{Pairs of commuting nilpotent matrices.}\label{sec1}
\subsection{Introduction}
We assume throughout Section \ref{sec1} that $K$ is an infinite field. Further assumptions on $K$, when needed, will be explicitly stated in each result. Given $B=J_P\in 
M_n(K)$,
 a nilpotent $n\times n$ matrix in Jordan form corresponding to the
partition $P$ of $n$, we denote by $\mathcal C_B$ the centralizer of $B$,
\begin{equation}
\mathcal C_B=\{ A\in M_n(K)\mid [A,B]=0\},
\end{equation} 
and by $\mathcal N_B$ the set of nilpotent elements of $\mathcal C_B$. They each have a natural scheme structure. It
 is well known that $\mathcal N_B$ is an
irreducible algebraic variety (\cite[Lemma 2.3]{Bas2}, see also Lemma \ref{nilpBirredlem} below). Thus there is a Jordan partition that we will denote
$Q(P)$ of a generic matrix $A\in \mathcal N_B$. Several have studied the problem of determining $Q(P)$ 
given $P$ \cite{Oblak,Oblak2,KO, Pan}.  We here first determine the ``stable'' partitions $P$ under $P\to Q(P)$ -- that is, those $P$ for which $Q(P)=P$ -- using results from \cite{Bas2} (see Theorem \ref{stablecor} below).
\begin{thm}$P$ is stable if and only if the parts of $P$ differ pairwise by at least two.
\end{thm}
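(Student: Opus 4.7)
The plan is to exploit the block Toeplitz description of $\mathcal C_B$ for $B = J_P$. Writing $P = (p_1 \geq p_2 \geq \cdots \geq p_k)$, every $A \in \mathcal C_B$ decomposes into a $k \times k$ array of blocks $A_{ij}$ of size $p_i \times p_j$, each constrained by commutation with $J_P$ to a shifted upper-Toeplitz pattern with exactly $\min(p_i, p_j)$ free parameters. Since $\mathcal N_B$ is irreducible by Lemma \ref{nilpBirredlem}, $Q(P)$ is read off from the generic value of the rank functions $A \mapsto \mathrm{rank}(A^m)$, $m \geq 1$. Because $J_P$ itself lies in $\mathcal N_B$ and realises the partition $P$, the dominance $Q(P) \succeq P$ always holds, and the theorem amounts to detecting when it is strict.

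For the \emph{if} direction, assume every consecutive gap satisfies $p_i - p_{i+1} \geq 2$. I would prove that $\mathrm{rank}(A^m) \leq \mathrm{rank}(J_P^m)$ for all $A \in \mathcal N_B$ and all $m$, which forces $Q(P) = P$. The key observation is that the shift constraint pins an off-diagonal block $A_{ij}$ with $p_i \neq p_j$ to act between nested shift-invariant subspaces whose codimensions in their ambient blocks differ by at least $|p_i - p_j|$. Under the hypothesis $p_i - p_{i+1} \geq 2$ these codimensions accumulate across the powers of $A$, so that all off-diagonal contributions to $A^m$ are already absorbed by the diagonal contributions $J_{p_i}^m$. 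A short induction on $m$ (or on $k$) then packages this rank bound, and the threshold $2$ appears as the smallest gap at which this absorption becomes automatic. For the precise combinatorial estimate I would invoke the authors' Theorem \ref{stablecor} extracted from \cite{Bas2}.

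For the \emph{only if} direction, suppose some consecutive gap satisfies $p_i - p_{i+1} \leq 1$. I would construct $A \in \mathcal N_B$ with Jordan type strictly dominating $P$ by mixing the $i$-th and $(i+1)$-st Jordan blocks: on the subspace $V_i \oplus V_{i+1}$ replace $\mathrm{diag}(J_{p_i}, J_{p_{i+1}})$ by
\[
M = \begin{pmatrix} J_{p_i} & E_{i,i+1} \\ E_{i+1,i} & J_{p_{i+1}} \end{pmatrix},
\]
where $E_{i,i+1}, E_{i+1,i}$ are nonzero off-diagonal Toeplitz blocks permitted by the centralizer condition. A direct computation of the powers of $M$, exploiting that the gap $\leq 1$ leaves enough off-diagonal room to genuinely couple the two blocks, shows that for generic coefficients $M$ has a single Jordan block of size $p_i + p_{i+1}$. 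Extending by $J_{p_j}$ on the remaining $V_j$ produces $A \in \mathcal N_B$ whose Jordan partition, obtained by replacing $\{p_i, p_{i+1}\}$ in $P$ with the single part $p_i + p_{i+1}$, strictly dominates $P$; hence $Q(P) \succ P$ and $P$ is not stable.

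The principal obstacle is the rank bookkeeping in the if direction: tracking how off-diagonal Toeplitz entries combine under powers of $A$ and verifying that the $\geq 2$ gap hypothesis forbids any resulting rank increase beyond $J_P^m$. This combinatorial estimate is the technical heart of the argument, and I would lean directly on Theorem \ref{stablecor} from \cite{Bas2} for its packaged form.
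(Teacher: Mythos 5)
Your high-level architecture matches the paper's: irreducibility of $\mathcal N_B$ gives a well-defined $Q(P)$, the dominance $Q(P)\ge P$ is automatic, and the theorem reduces to a rank bound (if direction) plus an explicit construction (only if). However, there are two concrete problems.

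In the ``if'' direction you say you would ``invoke the authors' Theorem \ref{stablecor} extracted from \cite{Bas2}'' — but Theorem \ref{stablecor} \emph{is} the statement under proof, so this is circular. The tool you actually want is Basili's Proposition 3.5 (Theorem \ref{bas3.5thm} in the paper), which gives $\mathrm{rank}\bigl((A^{s_P})^m\bigr)\le \mathrm{rank}(B^m)$ for all $A\in\mathcal N_B$. When the parts of $P$ differ pairwise by at least two, every maximal string of $P$ is a single part, so $s_P=1$ and the bound reads $\mathrm{rank}(A^m)\le\mathrm{rank}(J_P^m)$ for all $m$, which together with $Q(P)\ge P$ forces $Q(P)=P$. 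Your informal account of why the gap-two hypothesis should kill off-diagonal rank contributions is not wrong in spirit, but it is not a proof, and the packaged statement you reach for is the wrong one.

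In the ``only if'' direction your construction breaks when $p_i=p_{i+1}$. A matrix in $\mathcal C_B$ with diagonal blocks $J_{p_i}, J_{p_{i+1}}$ and off-diagonal Toeplitz blocks $E_{i,i+1}, E_{i+1,i}$ both having nonzero leading coefficients $a,b$ projects, in the semisimple quotient of $\mathcal C_B$ described in \cite[Lemma 2.3]{Bas2}, to the matrix $\left(\begin{smallmatrix}0&a\\b&0\end{smallmatrix}\right)\in\mathrm{Mat}_2(K)$, which is not nilpotent when $ab\ne 0$. So your $M$ with ``generic coefficients'' simply does not lie in $\mathcal N_B$ in the equal-parts case, and ``for generic coefficients $M$ has a single Jordan block'' is false there. (For $p_i-p_{i+1}=1$ the two parts are distinct, the semisimple quotient contribution vanishes, and your $M$ is fine.) The paper sidesteps this entirely via Lemma \ref{qbiggerlem}: since $p_i-p_{i+1}\le 1$, Lemma \ref{partpowlem}\eqref{partpowlemi} shows $(J_n)^2$ with $n=p_i+p_{i+1}$ already has Jordan partition $(p_i,p_{i+1})$; conjugating so that $(J_n)^2\mapsto J_{(p_i,p_{i+1})}$ carries $J_n$ to a single-block nilpotent element of the centralizer, and nilpotence comes for free. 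You should either restrict to the block-upper-triangular case $E_{i+1,i}=0$ and justify the single-block claim directly, or adopt the paper's square-root-of-a-power argument.
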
\par\noindent
We next in Section \ref{hilbsec} consider a pair of commuting $n\times n$ nilpotent matrices $(A,B)$ such that $\dim_K K[A,B]=n$.
The ring $\mathcal A=K[A,B]\cong K\{x,y\}/I_{A,B}$ has a Hilbert function $H=H(\mathcal A )$
satisfying 
\begin{equation*}
H=(1,2,\ldots ,\nu ,t_{\nu},\ldots , t_j,0)  \text{ where } \nu\ge t_{\nu}\ge \cdots \ge t_j>0,
\end{equation*}
where $j$ is the socle degree of $H$. We denote by
$P(H)$ the dual partition to the partition of $n$ given by $H$: thus, the entries of $P(H)$ are the lengths of the
rows of the bar graph of $H$ (Definition \ref{diagdef}). We denote by $\mathcal U_B\subset \mathcal N_B$ the dense subset
$\{ A\in {\mathcal N}_B\mid \dim_K K[A,B]=n\}$. Considering an element of the pencil $C_\lambda =A+\lambda B, \lambda \in K$, and the multiplication endomorphism $\times (
A+\lambda B)$ 
it induces on $K[A,B]$, we have (Theorems \ref{genericlem} and \ref{decreasebthm})
\begin{thm}\begin{enumerate}[A.]
\item Suppose $A\in \mathcal U_B$, let $H=H(K[A,B])$ of socle degree $j$, and let $K$ be an algebraically closed field 
with $\cha K=0$ or $\cha K>j$.
 Then for generic $\lambda\in \mathbb
P^1$ the Jordan block sizes of the action of $A+\lambda B$ on $K[A,B]$ are given by the parts of $P(H)$.
\item Assume further $\cha K=0$ or $\cha K>n$. Then the partition $Q(P)$ satisfies
\begin{equation*}
Q(P)=\max_{A\in \mathcal U_B} P(H(K[A,B])),
\end{equation*}
and has decreasing parts.
\end{enumerate}
\end{thm}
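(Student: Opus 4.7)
Write $\mathcal{A} = K[A,B]$ as a local Artinian ring with maximal ideal $\mathfrak{m}=(A,B)$ and socle degree $j$, and let $G = \mathrm{gr}_{\mathfrak{m}}(\mathcal{A}) \cong K[x,y]/I^{*}$ be its associated graded ring, a standard graded codimension-$\le 2$ Artinian $K$-algebra with the same Hilbert function $H$. For $\ell = A + \lambda B \in \mathfrak{m}$, multiplication by $\ell$ preserves the filtration by powers of $\mathfrak{m}$ and induces multiplication by $\bar\ell = x + \lambda y$ on $G$. The strategy is to bracket the Jordan partition of $\times\ell$ on $\mathcal{A}$ between $P(H)$ on both sides in the dominance order, thereby proving Part~A; Part~B then follows by semicontinuity together with the coincidence of matrix and operator Jordan types when $\dim K[A,B] = n$.

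For Part~A, one direction is essentially automatic: the filtration yields an injection $\mathrm{gr}(\ker(\times\ell)^{k}) \hookrightarrow \ker(\times\bar\ell)^{k}$, giving $\dim\ker(\times\ell)^{k} \le \dim\ker(\times\bar\ell)^{k}$ and hence $\mathrm{Jord}(\times\ell) \trianglerighteq \mathrm{Jord}(\times\bar\ell)$. In characteristic $0$ or greater than $j$ the graded algebra $G$ satisfies the Strong Lefschetz Property, classical for codimension-two standard graded Artinian algebras in suitable characteristic, so that for generic $\lambda$ the map $\bar\ell^{k}$ attains maximum rank between every pair of graded pieces, and a combinatorial bookkeeping identifies $\mathrm{Jord}(\times\bar\ell) = P(H)$; thus $\mathrm{Jord}(\times\ell) \trianglerighteq P(H)$. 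For the reverse direction, I plan to lift the SLP decomposition from $G$ to $\mathcal{A}$: choose a basis of $G$ consisting of cyclic $\bar\ell$-chains $\bar v_{s}, \bar\ell \bar v_{s}, \ldots, \bar\ell^{P(H)_{s} - 1} \bar v_{s}$, lift each generator $\bar v_{s}$ to $v_{s} \in \mathfrak{m}^{\deg \bar v_{s}}$, and verify that the iterates $\{\ell^{i} v_{s} : 0 \le i < P(H)_{s}\}$ form a basis of $\mathcal{A}$ (their filtration leading forms being the chosen basis of $G$) and that $\ell^{P(H)_{s}} v_{s}$ lies in the $K[\ell]$-span of strictly shorter lifted chains. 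This produces $\mathrm{Jord}(\times\ell) \trianglelefteq P(H)$, whence equality.

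For Part~B, Jordan type is upper semicontinuous in the dominance order on $\mathcal{N}_{B}$, so $Q(P)$ is the dominance-maximum Jordan type attained. For any $A \in \mathcal{U}_{B}$, Part~A provides a generic $\lambda$ for which the matrix Jordan type of $A + \lambda B \in \mathcal{N}_{B}$ equals $P(H(K[A,B]))$, and since $A + \lambda B \in \mathcal{N}_{B}$ we get $P(H(K[A,B])) \trianglelefteq Q(P)$. Conversely, for generic $A \in \mathcal{U}_{B}$ the matrix Jordan type of $A$ is exactly $Q(P)$; upper semicontinuity along the pencil $\{A + \lambda B\}$ then forces the special value at $\lambda = 0$ to be dominated by the generic value, giving $Q(P) \trianglelefteq P(H(K[A,B]))$. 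Combining, $P(H(K[A,B])) = Q(P)$ for generic $A$, so the maximum equals $Q(P)$. The strictly-decreasing-parts assertion follows from $P(H)_{k} - P(H)_{k+1} = \#\{i : h_{i} = k\} \ge 1$ for $k = 1, \ldots, \nu$, since $H = (1, 2, \ldots, \nu, t_{\nu}, \ldots)$ attains each value from $1$ to $\nu$ in its initial ascent.

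The principal obstacle is the reverse dominance $\mathrm{Jord}(\times\ell) \trianglelefteq P(H)$ in Part~A: the closure of the lifted chains --- that is, showing $\ell^{P(H)_{s}} v_{s}$ can be expressed in the other lifted chains so that the Jordan blocks have lengths exactly $P(H)_{s}$ --- is precisely where the characteristic hypothesis $\cha K > j$ is essential to preclude low-characteristic obstructions to the lift.
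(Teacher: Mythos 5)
Your Part~B argument, and your reduction of Part~A to a pair of dominance inequalities, are sound and close in spirit to the paper's: Part~B is proved in the paper (Theorem~\ref{decreasebthm}, Lemma~\ref{comparelem}) by exactly your semicontinuity-along-the-pencil argument. One caveat there: passing between the Jordan type of the \emph{matrix} $A+\lambda B$ on $V$ and that of the \emph{operator} $\times(A+\lambda B)$ on $K[A,B]$ requires a cyclic vector for the pair (Theorem~\ref{Barthm}); it holds for generic $A\in\mathcal N_B$, but it does not follow from $\dim_K K[A,B]=n$ alone, so your phrase ``coincidence of matrix and operator Jordan types when $\dim K[A,B]=n$'' needs to be replaced by an appeal to genericity and Baranovsky's lemma.

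The genuine gap is the reverse inequality $\mathrm{Jord}(\times\ell)\trianglelefteq P(H)$ in Part~A, which you correctly flag as the principal obstacle but do not prove, and whose proposed verification is false for an arbitrary lift. All one knows a priori is $\ell^{P(H)_s}v_s\in\mathfrak m^{\deg\bar v_s+P(H)_s}$, a space spanned by the \emph{tails of the other chains, including longer ones}, not by shorter chains. Already for $H=(1,2,1,1)$ (Example~\ref{specializeex}), with chains generated by $1$ (length $4$) and $g_1$ (length $1$), replacing the good lift $g_1$ by $g_1+cx^2$ gives $\ell\cdot(g_1+cx^2)=c\,\ell^3\cdot 1\neq 0$, which lands in the tail of the \emph{longer} chain; the sum $K[\ell]\cdot 1+K[\ell]v_2$ is then not direct and yields no bound on the Jordan type. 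One must correct the lifts inductively by subtracting suitable $K[\ell]$-multiples of the other generators and show the process terminates with $\ell^{P(H)_s}v_s=0$ for every $s$; carrying this out is precisely the content of the existence of a \emph{normal} standard basis of $I$ in a generic direction (\cite{Br} in characteristic $0$, \cite{I1} for $p>j$), which is the single input the paper's proof of Theorem~\ref{genericlem} invokes and which you would have to either cite or reprove. Two further remarks: (i) your SLP input in step (1) is, for codimension two, essentially the graded case of that same normal-basis theorem, so your route does not avoid it; (ii) the characteristic hypothesis is needed in the \emph{lower}-bound direction (your SLP step), not the lifting step --- in Example~\ref{McNex} the algebra is already graded, lifting is vacuous, and what fails when $p\mid d$ is the maximal rank of $\bar\ell^{\,d}:G_0\to G_d$; the upper bound $\mathrm{Jord}(\times\ell)\trianglelefteq P(H)$ holds in all characteristics via weak normal bases \cite[Theorem 4.3]{I1}.
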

These results were announced in the talk notes \cite{I4}, and have been used by T. Ko{\v{s}}ir and
P.~Oblak in their proof that $Q(P)$ is itself stable \cite{KO}. We state their result in Theorem \ref{Gorensteinthm}.
\subsection{Stable partitions $P$}
We denote by $P=(p_1,\ldots ,p_t), p_1\ge
\cdots
\ge p_t\ge 1$ a partition $P$ with t parts (so the Jordan nilpotent matrix of partition P has rank $n-t$); we let $n(i)=\# $ parts of $P$ at least
$i$. Then the dual partition $\hat{P}$ (switch rows and columns in the Ferrers graph of $P$) satisfies $\hat{P}=(n(1),n(2) \ldots )$.
The following lemma is well known and motivates Definition \ref{decompdef}.
\begin{lemma}[Jordan blocks of ${J_P}^i$]\label{partpowlem}
Consider the $n\times n$ Jordan matrix $J_P$ of partition $P$. Then
\begin{enumerate}[i.]
\item\label{partpowlemi} For $P=[n]$, a single block, the partition of $(J_P)^i$ for $i\le n$ is
the unique partition of $n$ having
$i$ parts of sizes differing by at most 1. For $P=[n]$ and $i>n$ the partition of
$(J_P)^i$ has $n$ parts of size 1.
\item\label{partpowlemii}
For an arbitrary $P$, the Jordan partition of $(J_P)^i$ is the union of the partitions for
$(J_{[p_k]})^i, k=1,\cdots ,t$.
\item The rank of $(J_P)^i$ satisfies
\begin{equation}\label{rankeq}
\text { rank } (J_P)^i=n-\left( n(1)+\cdots + n(i)\right).
\end{equation}
\item\label{ranktoP} Let $A$ be nilpotent $n\times n$. The difference sequence $\Delta$ of $(n,rk(A^1),rk(A^2), \ldots )$
is the dual partition $\hat {P_A}$ to $P_A$, the Jordan partition of $A$.
\end{enumerate}
\end{lemma}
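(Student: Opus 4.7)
My plan is to handle the four parts in order, with part (i) as the computational core and the remaining parts following by direct manipulation.

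For part (\ref{partpowlemi}), I would work in the standard basis $e_1,\ldots,e_n$ of $K^n$ in which $J_{[n]} e_k = e_{k-1}$ (with $e_0 = 0$). Then $(J_{[n]})^i$ acts by $e_k \mapsto e_{k-i}$, so the orbits of the basis under $(J_{[n]})^i$ are exactly the chains through the residue classes modulo $i$ in $\{1,2,\ldots,n\}$, each chain giving one Jordan block. Writing $n = qi+s$ with $0\le s<i$, the residue classes yielding $s$ chains of length $q+1$ and $i-s$ chains of length $q$, which is the unique partition of $n$ with $i$ parts differing by at most one. For $i>n$ the matrix is zero, giving $n$ blocks of size $1$.

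For part (\ref{partpowlemii}), since $J_P$ is the block-diagonal direct sum of the $J_{[p_k]}$, so is $(J_P)^i$, and the Jordan partition of a direct sum is the union of the Jordan partitions of the summands.

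For part (iii), I would compute rank via kernel. By parts (\ref{partpowlemi})–(\ref{partpowlemii}), $\dim \ker (J_{[p_k]})^i = \min(i,p_k)$, so
\begin{equation*}
\dim \ker (J_P)^i = \sum_{k=1}^t \min(i,p_k) = \sum_{k=1}^t \#\{j : 1\le j\le i,\ p_k\ge j\} = \sum_{j=1}^i n(j),
\end{equation*}
after swapping the order of summation and using the definition of $n(j)$. Subtracting from $n$ yields the claimed formula.

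For part (\ref{ranktoP}), any nilpotent $A$ is conjugate to $J_{P_A}$, so $\rk A^i = \rk (J_{P_A})^i$ for all $i\ge 0$. Applying the formula from (iii) gives $\rk A^{i-1} - \rk A^i = n(i)$, where $n(i)$ is computed from $P_A$. Since $(n(1),n(2),\ldots)$ is precisely $\hat P_A$ by definition, the difference sequence of $(n,\rk A,\rk A^2,\ldots)$ equals $\hat P_A$. The only mild obstacle is the combinatorial bookkeeping in part (\ref{partpowlemi}), which the residue-class viewpoint handles cleanly; the rest is purely formal.
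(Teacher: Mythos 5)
Your proof is correct, and your derivation of part (iv) from the rank formula (iii) via conjugation invariance is exactly the one-line argument the paper gives ("(iv) follows from (iii)"). Parts (i)--(iii) are treated by the paper as well known and left unproved, so your residue-class and kernel-dimension computations simply supply the standard details consistently with the paper's statement; no discrepancy.
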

\begin{proof} Here \eqref{ranktoP} follows from \eqref{rankeq}.
\end{proof}\par
\begin{example} For $P=[7]$, $(J_P)^2$ has blocks $(4,3)$, $(J_P)^3$ has blocks $(3,2,2)$,
$(J_P)^4$ has blocks $(2,2,2,1)$.  
\end{example}
\begin{definition}\label{decompdef}
We term a partition $P$ whose largest and smallest part differ by at most one, a ``string''. Such a $P$ is termed
``almost rectangular'' in \cite{KO} , since its Ferrer's graph (Definition~\ref{diagdef}) is obtained by removing a portion of the last column from that
of a rectangular partition.
Each partition $P$ is the union $P=P(1)\cup\ldots \cup P(r)$ of strings $P(i)$.
We let $r_P$ be the minimum number $r$ of subpartitions $P(i)$ in any such decomposition of $P$.
\end{definition}
\begin{example} For $P=(5,4,4,3,2)$ we may subdivide $P=(5,4,4)\cup (3,2)$, which gives
$r_P=2$. For $P=(8,7,7,7,5,5,4,2,1), r_P=3$. The subdivision into $r_P$ strings need not
be unique: for $P=(5,4,3,2,1)= (5,4)\cup (3,2)\cup (1)$ or $(5,4)\cup ( 3)\cup (2,1)$, with
$r_P=3$.  
\end{example}
Recall that we denote by ${\mathcal N}_B$ the set of nilpotent elements of the centralizer $\mathcal C_B$, endowed with its natural structure as a scheme \cite{Bo}.
 R. Basili showed in \cite[Lemma 2.3]{Bas2} based on \cite{TA}, that the nilpotent commutator $\mathcal N_B$ of
a nilpotent matrix $B$ is an irreducible variety. For completeness we include a proof suggested by the referee of the following more general statement.
\begin{lemma}\label{nilpBirredlem} If $\mathfrak A$ is a finite dimensional algebra over an infinite field $K$, then the scheme $\mathcal N(\mathfrak A)$ of nilpotent elements of $\mathfrak A$ is an irreducible variety.
\end{lemma}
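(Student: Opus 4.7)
The plan is to realize $\mathcal{N}(\mathfrak{A})$ as the image of an irreducible variety under the conjugation action of the unit group $\mathfrak{A}^\times$, after reducing to the case of an algebraically closed base field. Setting $m=\dim_K\mathfrak{A}$, an element $a\in\mathfrak{A}$ is nilpotent if and only if $a^m=0$, so $\mathcal{N}(\mathfrak{A})$ is the closed subscheme of the affine space $\mathfrak{A}$ cut out by the polynomial coordinates of $a\mapsto a^m$. The formation of this scheme commutes with base change, and irreducibility descends under faithfully flat extension, so we may reduce to the case $K=\overline{K}$.

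Over algebraically closed $K$, let $R=\mathrm{rad}(\mathfrak{A})$, which is a nilpotent ideal, and let $\pi\colon \mathfrak{A}\to\mathfrak{A}/R\cong\prod_{i=1}^s M_{n_i}(K)$ be the Wedderburn--Artin quotient. First I would check that $a$ is nilpotent iff $\pi(a)$ is nilpotent: one direction is trivial, and conversely $\pi(a)^N=0$ gives $a^N\in R$, then $R^t=0$ for some $t$ yields $a^{Nt}=0$. Set $\mathfrak{N}=\prod_i\mathfrak{n}_i$, where $\mathfrak{n}_i$ is the strictly upper-triangular matrices in $M_{n_i}(K)$, and let $\widetilde{\mathfrak{N}}=\pi^{-1}(\mathfrak{N})$; this is an affine subspace of $\mathfrak{A}$ contained in $\mathcal{N}(\mathfrak{A})$.

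The key step is to show that every nilpotent element is $\mathfrak{A}^\times$-conjugate to one in $\widetilde{\mathfrak{N}}$. By Jordan normal form in each factor $M_{n_i}(K)$, there is $g\in(\mathfrak{A}/R)^\times=\prod\mathrm{GL}_{n_i}(K)$ with $g\,\pi(a)\,g^{-1}\in\mathfrak{N}$. I would next verify that any preimage $\tilde g\in\mathfrak{A}$ of $g$ is automatically a unit: lifting an inverse of $g$ to some $\tilde h$ gives $\tilde g\tilde h=1+r$ with $r\in R$ nilpotent, so $1+r$ is invertible (and symmetrically on the left). Then $\tilde g^{-1}a\tilde g\in\widetilde{\mathfrak{N}}$, so $\mathcal{N}(\mathfrak{A})$ is the image of the morphism $f\colon \mathfrak{A}^\times\times\widetilde{\mathfrak{N}}\to\mathfrak{A}$, $(g,n)\mapsto gng^{-1}$. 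Since $\mathfrak{A}^\times$ is a principal open of $\mathfrak{A}$ (the nonvanishing locus of $\det L_a$) and $\widetilde{\mathfrak{N}}$ is an affine subspace, the source is irreducible; hence the image has irreducible closure, which coincides with the closed set $\mathcal{N}(\mathfrak{A})$.

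The main subtlety I expect is the lift of a Jordan-normal-form conjugator from $(\mathfrak{A}/R)^\times$ up to a genuine unit of $\mathfrak{A}$: without it $\mathcal{N}(\mathfrak{A})$ would threaten to decompose into components indexed by the partition types in the factors $M_{n_i}(K)$. The remaining ingredients — base change, the nilpotence-mod-$R$ criterion, and irreducibility of the image of an irreducible variety under a morphism — are routine once this setup is in place.
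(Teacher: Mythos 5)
Your proof is correct, but it takes a genuinely different route from the paper's. Both arguments first pass to $\overline{K}$ and both hinge on the fact that nilpotence in $\mathfrak A$ is detected in the semisimple quotient $\mathfrak A/\mathfrak J$, where $\mathfrak J$ is the radical. The paper then invokes the Wedderburn principal theorem to \emph{split} the quotient map, writing $\mathfrak A=\mathfrak L\oplus\mathfrak J$ with $\mathfrak L$ a semisimple subalgebra; this identifies $\mathcal N(\mathfrak A)$ with the product $\mathcal N(\mathfrak L)\times\mathfrak J$ of irreducible varieties, quoting the irreducibility of the nilpotent cone of a product of matrix algebras (dense orbit of the unit group). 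You avoid the splitting entirely: you work only with the quotient map $\pi$, lift a Jordan-form conjugator to a genuine unit of $\mathfrak A$ via the elementary fact that units lift along surjections with nilpotent kernel, and exhibit $\mathcal N(\mathfrak A)$ as the image of the irreducible variety $\mathfrak A^\times\times\pi^{-1}(\mathfrak N)$ under conjugation. Your version trades the (somewhat deeper) Wedderburn--Malcev splitting for unit-lifting plus a sweep-by-conjugation, so it is more self-contained and simultaneously reproves the matrix-algebra case rather than citing it; the paper's version, once the splitting is granted, is shorter and yields the cleaner structural statement $\mathcal N(\mathfrak A)\cong\mathcal N(\mathfrak L)\times\mathfrak J$. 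Two cosmetic points only: the direction of conjugation flips between $g\,\pi(a)\,g^{-1}\in\mathfrak N$ and $\tilde g^{-1}a\tilde g\in\widetilde{\mathfrak N}$ (harmless; replace $g$ by $g^{-1}$), and the bound $a^m=0$ with $m=\dim_K\mathfrak A$ uses that $\mathfrak A$ is unital (otherwise take $a^{m+1}=0$), which is the setting of the paper in any case.
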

\begin{proof} Since irreducibility is a geometric property, we may make a base change and assume that $K$ is algebraically closed. Let $\mathfrak J$ be 
the Jacobson radical of $\mathfrak A$. Then Wedderburn's theorem yields a semisimple subalgebra $\mathfrak L\subset \mathfrak A$ such that ${\mathfrak A}=
{\mathfrak L}\oplus \mathfrak J$, an internal direct sum as vector spaces; and the restriction of the natural projection $p: {\mathfrak A} \to {\mathfrak A}/\mathfrak J$ gives
an isomorphism
\begin{equation} p_{\mid \mathfrak L}: {\mathfrak L}\to {\mathfrak A}/\mathfrak J
\end{equation}
Now $\mathcal J$ is a nilpotent ideal, and ${\mathfrak L}\cong {\mathfrak A}/\mathfrak J$ is a (split) semisimple algebra over $K$. Thus $\mathfrak L$ is a direct
product of matrix algebras ${\mathrm{Mat}}_{r_u} (K)$ for certain $r_u$, by another theorem of Wedderburn. It is well known that the set of nilpotent elements
$\mathcal N({\mathfrak L})$ is irreducible, since the unit group $\mathfrak {L}^\ast$ of $\mathfrak L$ is a connected algebraic group, and has a dense orbit
on $\mathcal {\mathfrak L}$.\par
Now, an element $(\ell, j) \in {\mathfrak L}\oplus {\mathfrak J} = \mathfrak A$ is nilpotent when $\ell$ is nilpotent, and $\mathfrak J$ is an ideal, so as a 
variety is just a copy of an affine space, so is irreducible.  Thus the nilpotent commutator
$\mathcal N({\mathfrak L})\times \mathfrak J$ is the product of irreducible varieties, hence is irreducible.
\end{proof}
\par
Note that the proof that $\mathcal N_B$ is irreducible given in \cite[Lemma 2.3]{Bas2} is essentially an application of the above proof to 
the special case ${\mathfrak A}={\mathcal C}_B$, the centralizer of $B$.
 R. Basili uses there a specific parametrization of $\mathcal N_B$: certain matrices $\tilde{\mathcal A}_{u,u}$ 
appearing there for $\mathcal C_B$, whose nilpotence defines $\mathcal N_B$, are the elements of the 
matrix algebras $\mathrm{Mat}_{r_u}(K)$ in the above proof. Here $r_u$ is the multiplicity of the $u-$th distinct part of $P$.\smallskip
\par
Recall that, given a partition $P$, we denote by $B=J_P$ the Jordan nilpotent matrix of partition $P$.
It follows from Lemma \ref{nilpBirredlem}
that there is a unique partition $Q(P)$ that occurs for a generic element of $\mathcal N_B$. 
\par 
We recall the natural majorization partial order on the partitions $P$ (we assume $p_1\ge p_2\ge \cdots \ge p_t$).
\begin{equation}\label{poparteq}
P\ge P' \text{ if and only if for each } i, \sum_{1\le u\le i} p_i\ge \sum_{1\le
u\le i} p'_i.
\end{equation}
From Lemma \ref{partpowlem} it is easy to see that 
\begin{equation}
P\ge P' \Leftrightarrow  \forall i, \rk ({J_P}^i)\ge \rk ({J_{P'}}^i).
\end{equation}
We let $O_P$ denote the $Gl(n)$ orbit of $J_P$.
 We have \cite{H}
\begin{equation}  \overline{O_P}\supset O_{P'}\Leftrightarrow  P\ge P'.
\end{equation}
\begin{lemma}\label{QPlem}
The partition $Q(P)$ determined by the Jordan block sizes of a generic
element of
$\mathcal N_B$ satisfies $Q(P)\ge P_A$ for each $ A\in \mathcal N_B$.
\end{lemma}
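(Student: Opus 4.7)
The plan is to use the characterization of the majorization order via ranks of powers, together with the semicontinuity of the rank function and the irreducibility of $\mathcal N_B$.

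First I would recall that for a nilpotent matrix $A$, the Jordan partition $P_A$ is determined by the ranks $\rk(A^i)$ for $i=1,\dots,n$, as spelled out in Lemma~\ref{partpowlem}(\ref{ranktoP}), and that the displayed equivalence $P\ge P'\Leftrightarrow \forall i,\ \rk(J_P^i)\ge\rk(J_{P'}^i)$ reduces the problem to comparing these ranks. So it suffices to show that for a generic $A\in\mathcal N_B$ and any $A'\in\mathcal N_B$, one has $\rk(A^i)\ge\rk((A')^i)$ for every $i$.

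Next I would invoke lower semi-continuity: for each fixed $i$ and each integer $k$, the locus
\begin{equation*}
U_{i,k}=\{A\in\mathcal N_B \mid \rk(A^i)\ge k\}
\end{equation*}
is open in $\mathcal N_B$, because it is defined by the non-vanishing of some $k\times k$ minor of the polynomial map $A\mapsto A^i$. Consequently, on the irreducible variety $\mathcal N_B$ (Lemma~\ref{nilpBirredlem}) the integer-valued function $A\mapsto\rk(A^i)$ attains a (finite) maximum $m_i$, and the locus $U_i$ where $\rk(A^i)=m_i$ is a nonempty open subset. Since $A^n=0$ for every $A\in\mathcal N_B$, only finitely many indices $i\in\{1,\dots,n-1\}$ matter, so the intersection $U=\bigcap_i U_i$ is a finite intersection of nonempty open subsets of the irreducible variety $\mathcal N_B$, hence is itself nonempty and open.

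On $U$ all the ranks $\rk(A^i)$ take their maximum values simultaneously, so by Lemma~\ref{partpowlem}(\ref{ranktoP}) every $A\in U$ has the same Jordan partition; this is exactly the generic partition $Q(P)$. For any other $A'\in\mathcal N_B$, choosing $A\in U$, we obtain $\rk(A^i)\ge\rk((A')^i)$ for every $i$, which by the criterion recalled above is equivalent to $Q(P)\ge P_{A'}$. I do not anticipate a substantive obstacle here: the only subtlety is to articulate cleanly that taking a finite intersection of nonempty opens in the irreducible $\mathcal N_B$ yields a nonempty open set on which all ranks, and hence the Jordan partition, are simultaneously generic.
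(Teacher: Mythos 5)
Your argument is correct and is exactly the paper's proof, spelled out in more detail: the paper likewise deduces the lemma from the irreducibility of $\mathcal N_B$, the rank formula \eqref{rankeq} (equivalently Lemma \ref{partpowlem}\eqref{ranktoP}), and the semicontinuity of the ranks of powers, so that on a nonempty open subset all ranks are simultaneously maximal and the resulting partition dominates every $P_A$. No substantive differences.
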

\begin{proof} This follows from the irreducibility of $\mathcal N_B$, from \eqref{rankeq}, and the semicontinuity
of the ranks of powers of $A$.
\end{proof}\par
Before the present work was announced \cite{I4}, there were several results known about $Q(P)$. 
\begin{theorem}\label{partsthm}\cite[Proposition 2.4]{Bas2}
The rank of a generic element $A\in \mathcal N_B$ is $n-r_P$.
Equivalently, the partition $Q(P)$ has $r_P$ parts.
\end{theorem}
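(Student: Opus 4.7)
The plan is to deduce both assertions of the theorem simultaneously by computing $\max_{A \in \mathcal N_B}\mathrm{rk}(A)$. By Lemma~\ref{partpowlem}\eqref{ranktoP}, the number of parts of the Jordan partition $P_A$ of any nilpotent $A$ equals $n-\mathrm{rk}(A)$, so the two formulations of the theorem are equivalent. Since $\mathcal N_B$ is irreducible (Lemma~\ref{nilpBirredlem}) and $\{A:\mathrm{rk}(A)\ge k\}$ is open in $\mathcal N_B$ (cut out by non-vanishing of $k\times k$ minors), the rank of a generic element of $\mathcal N_B$ equals $\max_{A\in \mathcal N_B}\mathrm{rk}(A)$. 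It therefore suffices to show this maximum is $n-r_P$.

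For the lower bound $\max \mathrm{rk}\ge n-r_P$, I construct an explicit $A$. Fix a minimal decomposition $P=P(1)\cup\cdots\cup P(r_P)$ into strings, with $|P(i)|=n_i$, and split $K^n=V_1\oplus\cdots\oplus V_{r_P}$ into $B$-invariant subspaces so that $B|_{V_i}$ has Jordan type $P(i)$. On each $V_i$, the almost-rectangular shape of $P(i)$ permits a nilpotent $A_i\in\mathcal C_{B|_{V_i}}$ whose Jordan form is a single block of size $n_i$: arranging a basis of $V_i$ along the Young diagram of $P(i)$ so that columns are $B|_{V_i}$-strings, one defines $A_i$ as a single $n_i$-cycle running through the cells in a staircase compatible with $B|_{V_i}$. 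The assembly $A=\bigoplus_i A_i$ lies in $\mathcal N_B$ and has $\mathrm{rk}(A)=\sum_i(n_i-1)=n-r_P$.

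The upper bound $\mathrm{rk}(A)\le n-r_P$ for every $A\in\mathcal N_B$ is the main obstacle. My approach is to exploit the parametrization of $\mathcal C_B$ used in the proof of Lemma~\ref{nilpBirredlem} and described in the remark following it. Writing the distinct parts of $P$ as $\lambda_1>\cdots>\lambda_m$ with multiplicities $r_u$, each $A\in\mathcal C_B$ is encoded by data $\tilde{\mathcal A}_{u,v}$ with diagonal components $\tilde{\mathcal A}_{u,u}\in\mathrm{Mat}_{r_u}(K)$, and $A\in\mathcal N_B$ if and only if every $\tilde{\mathcal A}_{u,u}$ is nilpotent (this is the image of $A$ in the Wedderburn quotient $\mathcal C_B/\mathcal J\cong \prod_u\mathrm{Mat}_{r_u}(K)$). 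Reading off rank through the filtration of $\mathcal C_B$ by distinct part-sizes, a nilpotent $\tilde{\mathcal A}_{u,u}$ contributes at most $r_u-1$ to a suitable ``rank count,'' and the interaction between consecutive blocks, mediated by the nilpotent off-diagonal components $\tilde{\mathcal A}_{u,v}$, is constrained by the size differences $\lambda_u-\lambda_v$.

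The combinatorial core of the upper bound is then to show that these per-block bounds, summed and adjusted for the allowed ``gluing'' between blocks of consecutive sizes, yield exactly $n-r_P$; equivalently, any assignment of nilpotent data produces at least one kernel dimension for each string in some decomposition of $P$. I expect to carry this out by induction on the number $m$ of distinct parts of $P$, peeling off the largest (or smallest) part-size and using that $r_P$ itself satisfies a matching subadditivity under this peeling. This matching between the algebraic rank count and the combinatorial string-minimum $r_P$ is the delicate step; once it is in place, combining with the explicit construction above completes the proof.
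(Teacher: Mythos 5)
Your reduction of the statement to computing $\max_{A\in\mathcal N_B}\mathrm{rk}(A)$ is correct (number of Jordan blocks $=\dim\ker A=n-\mathrm{rk}(A)$, and generic rank equals maximal rank on the irreducible variety $\mathcal N_B$ by semicontinuity), and your lower bound is sound: it is essentially the construction the paper itself uses in Lemma~\ref{qbiggerlem} and Theorem~\ref{qpthm}, namely that for a string $P(i)$ with $s_i$ parts summing to $n_i$, Lemma~\ref{partpowlem}\eqref{partpowlemi} shows $(J_{n_i})^{s_i}$ has Jordan type $P(i)$, so a conjugate of the single block $J_{n_i}$ lies in the centralizer of $J_{P(i)}$; summing over a minimal string decomposition gives an element of $\mathcal N_B$ of rank $n-r_P$. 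Note, for calibration, that the paper does not prove this theorem at all: it is quoted verbatim from Basili \cite[Proposition 2.4]{Bas2}, so the entire burden of a self-contained proof falls on the upper bound.

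And that upper bound, $\mathrm{rk}(A)\le n-r_P$ for \emph{every} $A\in\mathcal N_B$ (equivalently $\dim\ker A\ge r_P$), is exactly what your proposal does not establish. You correctly identify the parametrization of $\mathcal C_B$ by blocks $\tilde{\mathcal A}_{u,v}$ and the criterion that $A$ is nilpotent iff each diagonal block $\tilde{\mathcal A}_{u,u}\in\mathrm{Mat}_{r_u}(K)$ is nilpotent, but the ``combinatorial core'' you defer is the whole proof. In particular, the naive count in which each nilpotent diagonal block forces one kernel dimension would yield $\dim\ker A\ge m$, the number of \emph{distinct} parts of $P$; this is false in general (for $P=(2,1)$ one has $m=2$ but $Q(P)=(3)$, so $\dim\ker A=1$ generically), precisely because when consecutive distinct parts differ by exactly $1$ the off-diagonal blocks can absorb a kernel dimension --- which is also exactly when two distinct part-sizes can be merged into one string, lowering $r_P$ below $m$. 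Making the rank bookkeeping match the matching-type combinatorics that computes $r_P$ is the delicate content of Basili's Proposition 2.4, and your induction on the number of distinct parts is only announced, not carried out. As written, the proposal is a correct reduction plus half a proof; to complete it you would need to reproduce (or cite) Basili's block-rank estimate.
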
\noindent
Also, P. Oblak had determined the ``index'' or largest part of $Q(P)$ using graph theory \cite{Oblak}. We subsequently have given
another proof of Oblak's result (see \cite{BI1}).\par
We use the notation $\mid P\mid =n$, the integer partitioned by $P$.
\begin{definition}  
Let $\mathcal P=(P_1,\ldots, P_{r_P})$
be a decomposition of $P$ into $r_P$ non-overlapping strings:
\begin{equation}
\bigcup_i P_i=P, \text { and } P_i\cap P_j=\emptyset \text { if } i\not=j. 
\end{equation}
Given such a decomposition $\mathcal P$ of $P$, we
denote by $\tilde {\mathcal P}$ the partition $(\mid P_1\mid, \ldots ,\mid P_{r_P}\mid)$, rearranged in decreasing order. 
\end{definition}
For $P=(3,3,3,2,2,1)$ two such decompositions into strings are $\mathcal P=((3,3,3),(2,2,1)))$ and $\mathcal P'=((3,3,3,2,2),(1))$.
We have $\tilde {\mathcal P}=(9,5)$ and $\tilde {\mathcal P'}=(13,1)$. Here $r_P=2$.

\par\smallskip
\begin{lemma}\label{qbiggerlem} Suppose that the partition $P$ of $n$ contains two parts that are equal, or that
differ by one. Then $Q(P)>P$. 
\end{lemma}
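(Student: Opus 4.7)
My plan is to leverage the two tools that have just been established: Theorem~\ref{partsthm}, which counts the parts of $Q(P)$ as $r_P$, and Lemma~\ref{QPlem}, which gives $Q(P) \ge P_A$ for every $A \in \mathcal{N}_B$. The strategy is to show that the hypothesis forces $r_P$ to be strictly less than the number $t$ of parts of $P$, so that $Q(P)$ cannot equal $P$, and then combine this with $Q(P) \ge P$ to conclude $Q(P) > P$.

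First, I would use the hypothesis to produce an economical string-decomposition of $P$. By assumption, $P$ contains two parts $p_i, p_j$ with $|p_i - p_j| \le 1$; by Definition~\ref{decompdef}, these two parts together form a string (their largest and smallest values differ by at most one). Since every singleton is trivially a string, the remaining $t-2$ parts of $P$ can each be taken as a string on its own. This exhibits a decomposition of $P$ into at most $t-1$ strings, so $r_P \le t-1 < t$.

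Next I would apply Theorem~\ref{partsthm}: the partition $Q(P)$ has exactly $r_P$ parts. Combining with the previous step, $Q(P)$ has strictly fewer parts than $P$, so in particular $Q(P) \ne P$. Finally, since $J_P \in \mathcal{N}_{J_P}$ with Jordan partition $P$, Lemma~\ref{QPlem} gives $Q(P) \ge P$ in the dominance order of \eqref{poparteq}. Together with $Q(P) \ne P$, this yields the strict inequality $Q(P) > P$.

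I do not expect a real obstacle here: both facts I rely on have already been proved in the excerpt, and the only verification needed is the trivial combinatorial observation that two parts differing by at most one may be merged into a single string. The one point worth being careful about is making sure that ``string'' in Definition~\ref{decompdef} genuinely permits the two parts to be equal (it does, since a rectangular partition is a string), so that the case $p_i = p_j$ is handled by the same argument as the case $|p_i-p_j|=1$.
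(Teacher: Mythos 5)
Your argument is correct, but it follows a genuinely different route from the paper's. You get strictness by counting parts: the hypothesis lets you merge the two close parts into one string, so $r_P\le t-1$, and Theorem~\ref{partsthm} (Basili's Proposition~2.4, quoted in the paper) then says $Q(P)$ has $r_P<t$ parts, hence $Q(P)\neq P$; combined with Lemma~\ref{QPlem} applied to $A=B=J_P\in\mathcal N_B$, which gives $Q(P)\ge P$, you conclude $Q(P)>P$. The paper instead exhibits an explicit element of $\mathcal N_B$ whose partition strictly dominates $P$: it reduces to the case where $P$ has two parts differing by $0$ or $1$, notes by Lemma~\ref{partpowlem}\eqref{partpowlemi} that $(J_n)^2$ has partition $P$, so a conjugate $gJ_ng^{-1}$ lies in $\mathcal N_{J_P}$ with partition $P'=(|P|)>P$, and then invokes Lemma~\ref{QPlem} to get $Q(P)\ge P'>P$. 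Your version is shorter and avoids the square-root construction, but it leans on the external part-count theorem, whereas the paper's proof is constructive and essentially self-contained; more importantly, the constructive content of the paper's proof (the existence of $\tilde B\in\mathcal N_B$ with partition $\tilde{\mathcal P}$ for a string decomposition) is exactly what is reused in the proof of Theorem~\ref{qpthm}, so your argument, while valid for the lemma as stated, would not supply that later ingredient.
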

\begin{proof} Assume that $P$ has two parts that are the same
or that differ by one. Choose a decomposition $\mathcal P$ into $r_P$ strings $P_1,\ldots P_{r_P}$. We claim that some nilpotent matrix $\tilde{B}$ of partition $\tilde {\mathcal P}$ commutes with $J_P$. To show this, we may first reduce to the case that $P$ has two parts, which differ by $0$ or $1$. 
We have by Lemma \ref{partpowlem}\ref{partpowlemi} that the partition of $A=(J_{n})^2$ is $P$. Then $gAg^{-1}=J_P$ for some $g\in \mathrm{Gl}_n(K)$, so the nilpotent matrix
$g J_{n }g^{-1}$ centralizes $J_P$ and and has partition $P'=\mid P\mid$. This proves the claim.
Also
$P'$ is different from
$P$ since at least one string of $P$ has length greater than one, and $P'>P$. We have
by Lemma~\ref{QPlem} that $Q(P)\ge P'$, so
$Q(P)>P$.
\end{proof}\par
Note that when $P=(2,2)$, then $P'=(4)$, and $J_{P'}$ does not itself commute with $J_{P}$. \par\smallskip
We now determine the ``stable'' partitions $P$, for which $Q(P)=P$.
We need the following result of R. Basili. Given a partition $P$, let $s_P$ be the length of the longest string in $P$,
\begin{equation*}\label{sPeq}
s_P = \max \{i\mid \exists k\mid  (p_k\ge p_{k+1}\ge\cdots \ge p_{k+i-1})\subset P\text { and } p_k-p_{k+i-1}\le 1\}.
\end{equation*}
 For $P=(5,4,4,3,2)$ we have $s_P=3$. Note that $s_P=1$ iff the parts of $P$ differ by at least two.\par
The next theorem shows that the Jordan partition $P_{A^{s_P}}$ of the $s_P$ power of any element $A\in \mathcal N_B$ 
satisfies  $P_{A^{s_P}}\le P=P_B$.
\begin{theorem}\label{bas3.5thm} \cite[Proposition 3.5]{Bas2}
Let $B\cong J_P$ be nilpotent of Jordan partition $P$, and let $A\in \mathcal N_B$, the
nilpotent commutator of
$B$. Then
\begin{equation}\label{rank2eq}
rank ( A^{s_P})^m\le rank (B^m).
\end{equation}
\end{theorem}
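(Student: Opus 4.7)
The plan is to establish the stronger structural claim $A^{s_P} \in B \cdot \mathcal{C}_B$. Granted this, since $B$ is central in $\mathcal{C}_B$ (it commutes with every element, by the very definition of the centralizer), we may write $A^{s_P} = B \cdot C$ with $C \in \mathcal{C}_B$, and iterating gives $A^{s_P m} = (BC)^m = B^m C^m$, whence $\mathrm{rank}(A^{s_P m}) \le \mathrm{rank}(B^m)$ by submultiplicativity of rank.

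I would begin from the explicit block/Toeplitz parametrization of $\mathcal{C}_B$ outlined after Lemma \ref{nilpBirredlem}. In the basis adapted to $J_P = B$, every $A \in \mathcal{C}_B$ decomposes into blocks $A_{k\ell}$ of size $p_k \times p_\ell$, each a Toeplitz matrix supported on diagonals $d = j - i$ satisfying $d \ge \max(0, p_\ell - p_k)$ (this is what the commutation $[A,B]=0$ imposes). Passing to the induced endomorphism $\bar A$ of the quotient $V/BV$, a $K$-vector space of dimension $t$ (the number of parts of $P$), a direct computation shows that the $(k,\ell)$-entry of $\bar A$ is the Toeplitz coefficient $\alpha^{k,\ell}_{p_\ell - p_k}$, which can be nonzero only when $p_k - p_\ell \le 1$.

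The heart of the proof is then to show $\bar A^{\,s_P} = 0$, from which $A^{s_P}(V) \subset BV$ follows. For this I would view $\bar A$ as a weighted adjacency matrix of the directed graph $\Gamma$ on $\{1, \dots, t\}$ with edges $k \to \ell$ exactly when $p_k - p_\ell \le 1$; the nonzero entries of $\bar A^{\,s}$ correspond to walks of length $s$ in $\Gamma$. A combinatorial analysis, combined with the nilpotence of each same-size diagonal sub-block of $\bar A$ (forced by the overall nilpotence of $A$), shows that any walk which survives cancellation must in fact be confined to a single string of $P$, whose length is at most $s_P$; this gives $\bar A^{\,s_P} = 0$. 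The upgrade from $A^{s_P}(V) \subset BV$ to $A^{s_P} \in B \cdot \mathcal{C}_B$ then follows routinely from the Toeplitz shape of the blocks of $A^{s_P}$: after expanding block products, each block of $A^{s_P}$ acquires an extra shift that makes it literally divisible by $B$ inside $\mathcal{C}_B$, not merely inside $\mathrm{End}(V)$.

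The main obstacle I anticipate is the combinatorial/nilpotence step that bounds surviving walks in $\Gamma$ by $s_P$. The nonzero pattern of $\bar A$ is richer than the string decomposition (parts differing by exactly one belong to the same edge in $\Gamma$ even when they end up in different strings of a minimal decomposition), so the argument cannot simply block-diagonalize by strings. Instead one has to use the nilpotence of the same-size Wedderburn factors $\mathrm{Mat}_{r_u}(K)$ to rule out long walks that repeatedly revisit a single size class, and then exploit the fact that any walk making net progress across distinct sizes must travel downward by at most one unit per step and cannot jump by more than one unit at any step, which couples it to the string structure of $P$ and yields the sharp bound $s_P$.
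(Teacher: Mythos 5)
First, note that the paper does not actually prove this statement: it is quoted from R.~Basili's \cite[Proposition 3.5]{Bas2}, so there is no internal proof to compare with, and your argument must stand on its own. It does not: the structural claim on which the whole plan rests is false. You claim that $\bar A^{\,s_P}=0$ on $V/BV$, i.e.\ $A^{s_P}V\subseteq BV$, and then that $A^{s_P}\in B\,\mathcal C_B$. Take $P=(4,2)$, so $s_P=1$. With basis $e_1,\dots ,e_4,f_1,f_2$, $Be_i=e_{i-1}$, $Bf_2=f_1$, $Be_1=Bf_1=0$, define $A$ by $Ae_4=f_2$, $Ae_3=f_1$, and $A=0$ on $e_1,e_2,f_1,f_2$. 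One checks directly that $AB=BA$ and $A^2=0$, so $A\in\mathcal N_B$; yet $A^{s_P}=A$ sends $e_4$ to $f_2\notin BV=\langle e_1,e_2,e_3,f_1\rangle$. Hence $A^{s_P}V\not\subseteq BV$, and a fortiori $A^{s_P}\notin B\,\mathcal C_B$, since every element of $B\,\mathcal C_B$ (indeed of $B\cdot M_n(K)$) has image inside $BV$. The rank inequality of the theorem does hold for this $A$, but not by your mechanism.

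The source of the error is the support pattern you assign to $\bar A$. The $(k,\ell)$ entry of the induced map on $V/BV$ (the coefficient of the top generator of the $k$-th block in the image of the top generator of the $\ell$-th block) is the corner entry of the intertwining Toeplitz block, and it can be nonzero precisely when $p_k\le p_\ell$: it is forced to vanish when the target block is strictly longer, but there is \emph{no} constraint when the target block is shorter, no matter how large the size drop. So the relevant digraph has edges from long parts to arbitrarily much shorter parts; these entries are untouched by the nilpotence of the same-size Wedderburn factors $\mathrm{Mat}_{r_u}(K)$, and in the example above $\bar A$ is a single surviving off-diagonal entry with nothing to cancel it. Consequently ``surviving walks are confined to a single string'' is false, and the nilpotency index of $\bar A$ is governed by the number of distinct part sizes and their multiplicities, not by $s_P$. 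The dual variant $A^{s_P}(\ker B^{i+1})\subseteq \ker B^{i}$ fails just as easily (e.g.\ $P=(3,1)$, $Af_1=e_1$, all other basis vectors to $0$), so neither the image nor the kernel filtration of $B$ is shifted by $A^{s_P}$ in the way your reduction needs; a correct argument must compare the ranks of powers by a finer analysis of the block structure, as in \cite[Proposition 3.5]{Bas2}.
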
\noindent
\begin{theorem}\label{qpthm} Suppose that $P$ has a decomposition $\mathcal P$ into $r_P$ strings, each of length $s_P$.
Then $Q(P)=\tilde {\mathcal P}$.
\end{theorem}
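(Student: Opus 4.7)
The plan is to sandwich $Q(P)$ between $\tilde{\mathcal P}$ from both sides. I will exhibit an explicit $A \in \mathcal N_B$ with Jordan partition $\tilde{\mathcal P}$ to force $Q(P) \ge \tilde{\mathcal P}$ via Lemma~\ref{QPlem}, and separately use Theorem~\ref{bas3.5thm} to rule out any $A \in \mathcal N_B$ of strictly larger Jordan type. Since $Q(P)$ is by definition the Jordan partition of a generic element of $\mathcal N_B$, which is maximum in the dominance order by Lemma~\ref{QPlem}, the equality will follow from these two bounds.

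For the lower bound, I decompose $J_P = \bigoplus_{i=1}^{r_P} J_{P_i}$ along the string decomposition $\mathcal P$, so $J_{P_i}$ acts on a space of dimension $|P_i|$. Since $P_i$ is a string of length $s_P$ summing to $|P_i| \ge s_P$, Lemma~\ref{partpowlem}(\ref{partpowlemi}) identifies $P_i$ as the Jordan partition of $(J_{|P_i|})^{s_P}$. Choosing $g_i \in \mathrm{Gl}_{|P_i|}(K)$ with $g_i (J_{|P_i|})^{s_P} g_i^{-1} = J_{P_i}$ and setting $A_i = g_i J_{|P_i|} g_i^{-1}$, I obtain a single Jordan block $A_i$ that commutes with $J_{P_i}$. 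The direct sum $A = \bigoplus_i A_i$ then lies in $\mathcal N_B$ and has Jordan partition $\tilde{\mathcal P}$, giving $Q(P) \ge \tilde{\mathcal P}$ by Lemma~\ref{QPlem}.

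For the upper bound, let $A \in \mathcal N_B$ be generic, so by Theorem~\ref{partsthm} its Jordan partition $Q := P_A = (q_1, \ldots, q_{r_P})$ has exactly $r_P$ parts, and by Theorem~\ref{bas3.5thm} we have $P_{A^{s_P}} \le P$. By Lemma~\ref{partpowlem}(\ref{partpowlemii}), $P_{A^{s_P}}$ is the union over $i$ of the length-$s_P$ strings of sum $q_i$ (truncated when $q_i < s_P$), while by hypothesis $P$ is the union of length-$s_P$ strings of sums $|P_i|$. The combinatorial implication needed is that a dominance inequality between two unions of equal-length strings of the same total descends to a dominance inequality between the underlying sequences of string-sums; applied here, this yields $(q_1, \ldots, q_{r_P}) \le (|P_1|, \ldots, |P_{r_P}|)$, which is $Q \le \tilde{\mathcal P}$.

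The main obstacle is this last combinatorial step. I would verify it by evaluating partial sums of the two sorted union partitions at the indices $k s_P$ for $1 \le k \le r_P$, where (after reordering the $P_i$ by decreasing sum) these partial sums recover $\sum_{i \le k} q_i$ and $\sum_{i \le k} |P_i|$ up to boundary contributions from string-parts that straddle the cut. A secondary complication is the degenerate case in which some $q_i < s_P$ and the formula for $P_{A^{s_P}}$ must be adjusted; this case does not ultimately arise in the generic situation, since the inequality $Q \le \tilde{\mathcal P}$ being proved forces $q_{r_P} \ge |P_{r_P}| \ge s_P$, so a separate quick argument disposes of it.
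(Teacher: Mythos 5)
Your lower bound is correct and is the same construction the paper uses (via the proof of Lemma \ref{qbiggerlem}): the block-diagonal $A=\oplus_i A_i$ with $A_i^{s_P}$ conjugate to $J_{P_i}$ lies in $\mathcal N_B$ and has partition $\tilde{\mathcal P}$, so $Q(P)\ge\tilde{\mathcal P}$ by Lemma \ref{QPlem}. The gap is in the upper bound. The combinatorial principle you invoke --- that a dominance inequality between two unions of equal-length strings with the same total descends to a dominance inequality between the underlying sequences of string-sums --- is false in the generality in which you state it: for $s=3$, the sums $(6,3)$ and $(5,4)$ give unions $(2,2,2)\cup(1,1,1)=(2,2,2,1,1,1)$ and $(2,2,1)\cup(2,1,1)=(2,2,2,1,1,1)$, which are equal (hence comparable), yet $(6,3)\not\le(5,4)$. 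So ``evaluating partial sums at the indices $ks_P$ up to boundary contributions'' cannot succeed as described; the boundary terms are precisely where the statement breaks. What rescues the implication here is the hypothesis you never use in this step: the strings $P_i$ have length $s_P$, the \emph{maximal} string length of $P$ (equivalently, the decomposition is unique --- the fact the paper's proof explicitly leans on). Maximality forces each consecutive block of $s_P$ parts of $P$, in sorted order, to be a string (if the $j$-th block had spread at least two, the strings meeting its top part would contribute at least $js_P$ parts lying within one of that value, which is too many), so the top $ks_P$ parts of $P$ sum to exactly $|P_1|+\cdots+|P_k|$; since the top $ks_P$ parts of $P_{A^{s_P}}$ sum to at least $q_1+\cdots+q_k$ (with no case distinction needed when some $q_i<s_P$, which also removes the circular appeal to the conclusion in your final paragraph), Theorem \ref{bas3.5thm} then yields $Q\le\tilde{\mathcal P}$.

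With that lemma repaired, your sandwich argument does prove the theorem, by a mechanism somewhat different from the paper's: the paper does not work with the inequality $P_{A^{s_P}}\le P$ alone, but notes that the constructed $\tilde B$ attains equality in \eqref{rank2eq}, so by semicontinuity the generic $A\in\mathcal N_B$ satisfies $P_{A^{s_P}}=P$ exactly; since such $A$ has $r_P$ Jordan blocks (Theorem \ref{partsthm}) and the decomposition of $P$ into $r_P$ strings is unique, its partition can only be $\tilde{\mathcal P}$. As submitted, however, your upper-bound step rests on an unproved combinatorial claim that is false as stated, so the argument is incomplete.
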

\begin{proof} The assumption is equivalent to there being a unique decomposition of $P$ into $r_P$ strings, and also that these strings have equal length. Let $B=J_P$ and $s=s_P$. The proof of Lemma \ref{qbiggerlem} 
 implies there exists $\tilde {B}\in N_B$ of
 partition $\tilde {\mathcal P}$. By Lemma \ref{partpowlem} \eqref{partpowlemii} ${\tilde {B}}^s$ has Jordan blocks given by the partition $P$, so for $A=\tilde{B}$ there is equality in
\eqref{rank2eq} of Theorem~\ref{bas3.5thm}. Hence, by semicontinuity of rank, for an open dense subset of $A\in\mathcal N_B$, the Jordan partition of ${A^s}$ is $P$. By the unicity of the 
decomposition of $P$ into $r_P$ strings, and using Theorem~\ref{partsthm} we conclude that the partition of $A$ is constant on that subset, implying $Q(P)=\tilde {\mathcal P}$.   
\end{proof}\par
Thus, for $P=(5,4,2,2)$, we have $Q(P)=(9,4)$; for $P=(8,7,7,5,5,4,2,2,2), Q(P)=(22,14,6)$.\par
Given a positive integer $c$, we denote by $cP$ the partition obtained by repeating $c$ times each part of $P$. For $P=(3,1,1)$, $2P=(3,3,1,1,1,1)$.
\begin{theorem}[Stable partitions]\label{stablecor} The following are equivalent.
\begin{enumerate}[i.]
\item The parts of the partition $P$ differ pairwise by at least two:
 \begin{equation} 
 P=(p_1,\ldots ,p_t), p_1\ge \ldots \ge p_t, \text { and for } 1\le u\le t-1 , p_u-p_{u+1}\ge 2.
\end{equation}
\item  $Q(P)=P;$ 
\item For some positive integer $c$,
$Q(cP)=(cp_1,cp_2,\ldots ,cp_t)$. 
\end{enumerate}
\end{theorem}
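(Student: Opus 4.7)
The plan is to establish the cycle (i)$\Rightarrow$(iii)$\Rightarrow$(i) together with (i)$\Rightarrow$(ii)$\Rightarrow$(i), leaning on Theorem~\ref{qpthm} for the forward directions and on Lemma~\ref{qbiggerlem} and Theorem~\ref{partsthm} for the contrapositive converses. Since (i)$\Rightarrow$(ii) is the $c=1$ case of (i)$\Rightarrow$(iii), there are really three independent implications to check.

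For (i)$\Rightarrow$(iii), I would assume the parts of $P$ are pairwise separated by at least two. Then $cP$, obtained by repeating each $p_i$ exactly $c$ times in decreasing order, still has consecutive distinct values separated by at least two. Consequently no string of $cP$ can span two different values, and the minimum-length string decomposition of $cP$ is the unique one that groups each block of $c$ equal copies together: it consists of $r_{cP}=t$ strings, all of common length $s_{cP}=c$. Theorem~\ref{qpthm} then identifies $Q(cP)=\tilde{\mathcal P}=(cp_1,\ldots,cp_t)$, and specialising to $c=1$ gives (i)$\Rightarrow$(ii).

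For (ii)$\Rightarrow$(i) I would use the contrapositive: if $P$ has two parts that are equal or differ by one, Lemma~\ref{qbiggerlem} yields $Q(P)>P$, contradicting (ii).

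The one implication with any content beyond invoking earlier results is (iii)$\Rightarrow$(i), which I would also do contrapositively, and this is the step I expect to be the crux. Suppose a consecutive pair $p_u,p_{u+1}$ in $P$ satisfies $p_u-p_{u+1}\leq 1$. For any $c$, the $c$ copies of $p_u$ followed by the $c$ copies of $p_{u+1}$ appear as consecutive entries of $cP$ with all pairwise differences at most one, hence form a single string of length $2c$. Merging those two adjacent blocks into one string while treating each of the remaining $t-2$ blocks as its own string gives a valid string decomposition of $cP$ using at most $t-1$ strings, so $r_{cP}\leq t-1$. By Theorem~\ref{partsthm}, $Q(cP)$ then has at most $t-1$ parts, while $(cp_1,\ldots,cp_t)$ has $t$ positive parts. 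Hence $Q(cP)\neq(cp_1,\ldots,cp_t)$ for every $c\geq 1$, and (iii) fails. The only subtlety here is confirming that the merged block and the remaining singleton-value blocks really do constitute a legitimate decomposition, but each block of $c$ equal copies is trivially a string on its own, so nothing more is needed.
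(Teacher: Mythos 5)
Your proposal is correct, and the forward implications and (ii)$\Rightarrow$(i) follow the paper exactly: (i)$\Rightarrow$(ii),(iii) via Theorem~\ref{qpthm} applied to the unique decomposition of $cP$ into $t$ strings of common length $c$, and (ii)$\Rightarrow$(i) by the contrapositive from Lemma~\ref{qbiggerlem}. Where you genuinely diverge is (iii)$\Rightarrow$(i). The paper argues by contradiction, reduces to the case $P=(p_1\ge p_2)$ with $p_1-p_2\le 1$, and uses Lemma~\ref{partpowlem}, in the manner of the proof of Lemma~\ref{qbiggerlem}, to exhibit a nilpotent matrix $A$ commuting with $J_{cP}$ whose $2c$-th power is $J_{cP}$ and whose partition is the single part $c(p_1+p_2)$; by Lemma~\ref{QPlem} this forces $Q(cP)>(cp_1,cp_2)$, so equality fails. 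You instead count parts: merging the $c$ copies of $p_u$ with the $c$ copies of $p_{u+1}$ into one string shows $r_{cP}\le t-1$, and Theorem~\ref{partsthm} says $Q(cP)$ has exactly $r_{cP}$ parts, so it cannot equal the $t$-part partition $(cp_1,\ldots,cp_t)$. Your route is somewhat cleaner in that it avoids both the explicit matrix construction and the ``it suffices to assume two parts'' reduction, using only results already quoted (Basili's Theorem~\ref{partsthm}); the paper's construction, on the other hand, yields the slightly stronger conclusion that $Q(cP)$ strictly dominates $(cp_1,\ldots,cp_t)$ in the order \eqref{poparteq}, mirroring Lemma~\ref{qbiggerlem}. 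Both arguments are valid, and your string-decomposition bookkeeping (each block of $c$ equal copies is trivially a string, the merged block is a string because $p_u-p_{u+1}\le 1$) is sound.
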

\begin{proof} Theorem \ref{qpthm} shows (i)$\Rightarrow$ (ii), and (i)$\Rightarrow$ (iii). (ii)$\Rightarrow $ (i) is from Lemma \ref{qbiggerlem}. To show (iii) $\Rightarrow$ (i), suppose, by 
way of contradiction, that $P$ has two parts that differ by at most one: it suffices to assume $P=(p_1\ge p_2)$. For any $c\ge 1$ one can use Lemma \ref{partpowlem}\eqref{partpowlemi} in a way analogous
to the proof of Lemma \ref{qbiggerlem} to find a nilpotent matrix $A$ whose $2c$-th power is $J_{cP}$ and whose partition has the single part $c(p_1+p_2)$; but then $Q(cP)\not=(cp_1,cp_2)$. \par
\end{proof}\par
We note that D. I. Panyushev has recently determined the ``self-large'' (what we call ``stable'') nilpotent orbits in a quite general context of the Lie algebra
of a connected simple algebraic group over an algebraically closed field $K$ of characteristic zero
\cite[Theorem~2.1]{Pan}. When the Lie algebra $\mathfrak g$ of $G$ is $ {sl}(V)$
his result restricts to Theorem \ref{stablecor} above for such $K$ (ibid. Example 2.5 1.(a)).\par\smallskip
One of our goals is to show links between the study of nilpotent commuting matrices, and that of the punctual Hilbert scheme. Thus, we have taken pains to refer to some of the relevant literature that we were aware of. Nevertheless,
 the inclusion of proofs of Lemmas \ref{nilpBirredlem} and \ref{barnsthm}
 make the main results of the paper relatively self contained. We do use in an essential way the standard bases for ideals in $K\{x,y\}$ from \cite{Br,I1}, in the proof of
Theorem \ref{genericlem}.
\section{Pair of nilpotent matrices and the Hilbert scheme}\label{hilbsec}
We denote by $R=K\{ x,y\}$ the power series ring,
i.e. the completed local ring at
$(0,0)$ of the polynomial ring $K[x,y]$, over a field $K$. We assume throughout Section \ref{hilbsec} that $K$ is an algebraically
closed field, unless we state otherwise for a particular result. We denote by $M=(x,y)$ the
maximal ideal of $R$, and by $V$ the $n$-dimensional vector space over the field $K$ upon which
$M_n(K)$ acts. 
\par\noindent
\begin{definition} We denote by $\mathcal N(n,K)$ the set of nilpotent matrices in $M_n(K)$, with its natural structure as irreducible variety. We define $\mathcal H(n,K)$
\begin{equation*}
\mathcal H(n,K)=\{ (A,B)\mid A,B\in \mathcal{N}(n,K) \text { and } AB-BA=0\}.
\end{equation*}
Given an
element $(A,B)\in \mathcal H(n,K)$,
we denote by $\mathcal A_{A,B}\cong K[A,B]$ the Artinian quotient of $R$,  
\begin{align*}
\mathcal A&={\mathcal A}_{A,B}=R/I,\, I=I_{A,B}=\ker (\theta ),\\ \, \theta:& R\to k[A,B], \,\,\theta (x)=A, \theta (y)=B.
\end{align*}
We let
 $\mathcal U(n,K)\subset \mathcal H(n,K)$ be the open subset such that
$\dim_K(\mathcal A_{A,B})=n$.\par
\end{definition}
 The Hilbert scheme $A^{[n]}=\Hilb^n(\mathbb A^2)$ parametrizes length-$n$ subschemes of $\mathbb A^2$, and is a desingularization of 
the symmetric product $A^{(n)}=\Sym^n(\mathbb A^2)$. Given a point $s\in \mathbb A^2$, we denote by $H^{[n]}$ the fibre of $A^{[n]}$ over
the point $(ns)$ of $ A^{(n)}$: roughly speaking, the local punctual Hilbert scheme $H^{[n]}$ parametrizes the length-$n$ Artinian quotients of $R$.\footnote{Work of R. Skjelnes et al 
shows that this rough viewpoint is inaccurate, see \cite{LST}; the fibre definition is accurate.}
J. Brian\c{c}on and subsequently M. Granger of the Nice school, showed that
the scheme
$H^{[n]}$ is irreducible in characteristic zero
\cite{Br,Gra};
it was a slight extension to show $H^{[n]}$  is irreducible for $\cha K> n$
\cite{I1}, but further progress awaited a connection to $\mathcal H(n,K)$.\par
 V. Baranovsky, R. Basili, and A. Premet related this problem of irreducibility to that of the
irreducibility of ${\mathcal H}(n,K)$ \cite{Bar,Bas2,Prem}.
Following H. Nakajima and V.~Baranovsky, we set 
\begin{equation}\label{Ueqn}\mathfrak W\subset \mathcal H(n,k)\times V: \{ (B,A,v)\in \mathcal H(n,K)\times V\mid v \text { is a cyclic vector
 for }(B,A).\}
\end{equation}
That is, $(B,A,v)\in \mathfrak W$ if any $(B,A)$-invariant subspace of $V$ containing $v$ is all of $V$.
The group $Gl(V)$ acts on $H(n,K)\times V$ by conjugation of the matrices, and action on the vector. 
\begin{lemma}\label{freeactlem} (\cite[Theorem 1.9]{Nak}, \cite[Lemma 6]{Bar} The action of $Gl(V)$ on $\mathfrak W$ is free, and, taking
$x\to A, y \to B$, $x,y$ local parameters at $s\in \mathbb A^2$ we have a morphism,
\begin{equation}  
\pi: \mathfrak W \to H^{[n]},
\end{equation}
whose fibers are the $Gl(V)$ orbits in $\mathfrak W$.
\end{lemma}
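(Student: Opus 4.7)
The plan is to establish the three claims of the lemma in turn: construction of the morphism $\pi$, freeness of the $Gl(V)$ action, and identification of its fibres with $Gl(V)$ orbits.

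For the morphism, I would first describe $\pi$ on points: associate to $(B,A,v)\in \mathfrak W$ the ideal $I_{A,B}\subset R$ given as the kernel of the evaluation map $\psi: R \to V$, $f(x,y)\mapsto f(A,B)v$. Cyclicity of $v$ makes $\psi$ surjective; since $A$ and $B$ are commuting nilpotent matrices, the maximal ideal $M=(x,y)$ acts nilpotently on $V$, so $R/I_{A,B}$ is a local Artinian $K$-algebra of length $n$ concentrated at $s$, hence a point of $H^{[n]}$. (Note that under cyclicity the surjection $K[A,B]\to V$, $p\mapsto pv$, is an isomorphism, so this $I_{A,B}$ coincides with the one defined earlier as $\ker \theta$.) To promote this to a morphism of schemes, I would exhibit the tautological family over $\mathfrak W$: the universal sections $A,B,v$ endow $\mathcal O_{\mathfrak W}\otimes V$ with the structure of an $\mathcal O_{\mathfrak W}[x,y]$-module, producing a surjection $\mathcal O_{\mathfrak W}[x,y]\to \mathcal O_{\mathfrak W}\otimes V$ whose kernel $\mathcal I$ cuts out a flat, length-$n$ closed subscheme of $\mathbb A^2\times \mathfrak W$ supported along $\{s\}\times \mathfrak W$; the universal property of $H^{[n]}$ then yields $\pi$.

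For freeness, suppose $g\in Gl(V)$ satisfies $gAg^{-1}=A$, $gBg^{-1}=B$, and $gv=v$. Then $g$ commutes with every polynomial in $A,B$, so $g\cdot p(A,B)v=p(A,B)\cdot gv=p(A,B)v$ for all $p\in R$. Cyclicity guarantees that every vector of $V$ has this form, forcing $g=\mathrm{id}$.

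For the fibre identification, it is clear from the definition that $Gl(V)$-conjugation preserves $I_{A,B}$, so each orbit lies inside a single fibre. Conversely, if $(B,A,v)$ and $(B',A',v')$ both map to the same ideal $I$, the induced $R$-linear isomorphisms $R/I\xrightarrow{\sim} V$ (sending $\bar 1$ to $v$ and to $v'$ respectively) can be composed to produce a unique $g\in Gl(V)$ satisfying $gv=v'$, $gAg^{-1}=A'$, and $gBg^{-1}=B'$. Together with freeness, each fibre is exactly one orbit. I expect the main (mild) obstacle to be the algebraicity of $\pi$, i.e.\ verifying that the set-theoretic recipe above is genuinely a morphism of schemes via the universal property of $H^{[n]}$; the remaining arguments reduce to elementary linear algebra once cyclicity is exploited.
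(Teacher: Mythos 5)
Your proposal is correct, and it is essentially the argument of the sources the paper cites: the paper itself gives no proof of this lemma, referring to Nakajima's Theorem 1.9 and Baranovsky's Lemma 6, and your writeup reproduces exactly that standard route -- the tautological locally free rank-$n$ quotient $\mathcal O_{\mathfrak W}[x,y]\to \mathcal O_{\mathfrak W}\otimes V$ yields the classifying morphism to $\Hilb^n(\mathbb A^2)$ landing in the fibre $H^{[n]}$ (since $A,B$ nilpotent forces support at $s$), cyclicity gives freeness, and composing the two isomorphisms $R/I\xrightarrow{\sim} V$ identifies each fibre with a single orbit. The only detail worth making explicit is the injectivity of $K[A,B]\to V$, $p\mapsto pv$ (if $p(A,B)v=0$ then $p(A,B)$ annihilates $K[A,B]v=V$ by commutativity), which justifies your parenthetical identification of $\ker\psi$ with the paper's $I_{A,B}=\ker\theta$.
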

\begin{theorem} \cite[Theorem 4]{Bar}\label{Barthm}\footnote{V. Baranovsky communicates in the MathSciNet review MR 1825165 of \cite{Bar} that a parenthetical remark
in the proof of Lemma 3, in (a) "i.e. $B_1$ has Jordan canonical form in this basis" is incorrect.}
The subset $\mathfrak W\subset \mathcal H(n,K)\times V$ is dense.
\end{theorem}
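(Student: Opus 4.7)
The strategy is to verify that $\mathfrak W$ is a nonempty Zariski-open subset of $\mathcal H(n,K)\times V$, and then to upgrade openness to density using the fiber bundle $\pi$ of Lemma~\ref{freeactlem} together with the known irreducibility of $H^{[n]}$.

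Openness of $\mathfrak W$ is straightforward: the cyclicity of $v$ for $(A,B)$ is equivalent, by Cayley--Hamilton, to the non-vanishing of at least one $n\times n$ minor of the $n\times n^2$ matrix with columns $\{A^iB^jv:0\le i,j\le n-1\}$. The entries of this matrix are polynomial in the coordinates of $(A,B,v)$, so $\mathfrak W$ is Zariski open in $\mathcal H(n,K)\times V$. For nonemptiness, I would exhibit the triple $(A_0,B_0,v_0)=(0,J_{[n]},e_1)$ in the convention $J_{[n]}e_i=e_{i-1}$; then $\{e_1,J_{[n]}e_1,\ldots,J_{[n]}^{n-1}e_1\}$ is a basis of $V$, so $v_0$ is cyclic for $(A_0,B_0)$ and $\mathfrak W\neq\emptyset$.

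For density, I would transport irreducibility through $\pi$: by Lemma~\ref{freeactlem} the fibers of $\pi:\mathfrak W\to H^{[n]}$ are free $\mathrm{Gl}(V)$-orbits of dimension $n^2$, and $H^{[n]}$ is irreducible (Brian\c con--Granger in characteristic $0$, Iarrobino for $\cha K>n$), so $\mathfrak W$ is itself irreducible of dimension $(n-1)+n^2=n^2+n-1$. This matches $\dim\mathcal H(n,K)+\dim V=(n^2-1)+n$ from Basili's calculation, forcing the closure $\overline{\mathfrak W}$ to be a top-dimensional irreducible component of $\mathcal H(n,K)\times V$.

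The main obstacle is showing $\overline{\mathfrak W}=\mathcal H(n,K)\times V$, equivalently that $\mathcal H(n,K)\times V$ has no irreducible component disjoint from $\mathfrak W$. I would close this gap by a deformation argument on commuting pairs: given any $(A,B)\in\mathcal H(n,K)$, conjugate $B$ to Jordan form $J_P$ and use irreducibility of the nilpotent commutator $\mathcal N_{J_P}$ (Lemma~\ref{nilpBirredlem}) together with Basili's structural results on a generic $A'\in\mathcal N_{J_P}$ to produce a small deformation of $(A,B)$ within $\mathcal H(n,K)$ admitting a cyclic vector. This last step, essentially equivalent to the global irreducibility of $\mathcal H(n,K)$ itself, is the deep content of the theorem and is precisely why it sits at the heart of Baranovsky's analysis relating $\mathcal H(n,K)$ to the punctual Hilbert scheme.
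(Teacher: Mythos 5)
Your openness and nonemptiness arguments are fine, but the density step has a genuine gap, and the route you sketch for closing it is the wrong one. Knowing that $\mathfrak W$ is irreducible of dimension $n^2+n-1$ and that this equals $\dim(\mathcal H(n,K)\times V)$ only shows that $\overline{\mathfrak W}$ is one top-dimensional component; it says nothing about components of $\mathcal H(n,K)$ disjoint from the cyclic locus, and whether such components exist is exactly the question. Worse, this detour is logically backwards and imports hypotheses absent from the statement: in Baranovsky's development the density of $\mathfrak W$ is the bridge used, via Lemma~\ref{freeactlem}, to \emph{transfer} irreducibility between $H^{[n]}$ and $\mathcal H(n,K)$, so one cannot feed Brian\c{c}on--Granger--Iarrobino irreducibility of $H^{[n]}$ (which also carries restrictions on $\cha K$) into the proof of Theorem~\ref{Barthm}, which holds with no such restriction.

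The step you leave as a gesture is the actual content, and it is \emph{not} ``essentially equivalent to the global irreducibility of $\mathcal H(n,K)$'': since $\mathfrak W$ is open, density only requires that every irreducible component of $\mathcal H(n,K)$ meet the cyclic locus, which is much weaker than there being a single component. The paper's route (Baranovsky's Lemma 3, reproduced as Lemma~\ref{barnsthm}\eqref{barnsthmii}) supplies precisely this: given any $(C,B)$ with $B=J_P$, one conjugates $C$ by a unit of $\mathcal C_B$ into a maximal nilpotent subalgebra $S_B$ (Basili's \cite[Lemma 2.3]{Bas2}) and chooses $A$ generic in $S_B$, so that $(A,B)$ admits a cyclic vector while the entire pencil $A+tC$ stays inside $\mathcal N_B$. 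The pencil is an irreducible curve in $\mathcal H(n,K)$ joining $(C,B)$ to a point of the open cyclic locus; taking $(C,B)$ generic in a given component, that component therefore meets $\mathfrak W$, and density follows. A ``small deformation admitting a cyclic vector,'' as you propose, is not enough by itself, because without an irreducible family connecting the two points the deformed pair could a priori lie in a different component.
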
\noindent
See also Lemma \ref{barnsthm} ff. As a consequence of Lemma \ref{freeactlem} and Theorem \ref{Barthm}, the irreducibility of $H(n,K)$ is equivalent to that of $H^{[n]}$. \par
 V. Baranovsky used this and Brian\c{c}on's Theorem to prove the irreducibility of $\mathcal H(n,K)$,
for $\cha K=0$ and $\cha K>n$. R.
Basili gave a direct ``elementary'' proof of the irreducibility of
$\mathcal H(n,K)$, that is valid also for $\cha K\ge n/2$. A. Premet later gave a Lie algebra
proof of the irreducibility of $\mathcal{H}(n,K)$ that is valid in all 
characteristics. The Basili and Premet results gave new (and different) proofs of the
irreducibility of $H^{[n]}$ when $K$ is algebraically closed, for $\cha K>n/2$ (R. Basili) or arbitrary characteristic (A. Premet).
 Note that the space of $\mathbb R$ (real) points of $\Hilb^n(R)$ has at least $\lfloor n/2\rfloor$ components
\cite[\S 5B]{I1}). These results showed that there is a strong connection between $\mathcal H(n,K)$ and $H^{[n]}$.
\subsection{Hilbert function strata:} 
Let $\mathcal A=R/I$ be an Artinian quotient of $R=K\{x,y\}$ of length $\dim_K(\mathcal A)=n\ge 1$,  and recall
that $M=(x,y)$ denotes the maximum ideal. The \emph {associated
graded algebra} $\mathcal A^*=Gr_M({\mathcal A})=\oplus_0^j {\mathcal A}_i$ of $\mathcal A$
satisfies (here $j=$ socle degree $\mathcal A:\,  \mathcal
A_j\not=0, {\mathcal A}_{j+1}=0$)
\begin{equation*}
{\mathcal A}_i=\langle M^i\cap I+M^{i+1}\rangle/M^{i+1}.
\end{equation*}
The
\emph{Hilbert function} $H(\mathcal A)$ is the sequence
\begin{equation*}
H(\mathcal A)=(h_0,\ldots ,h_j),\quad  h_i=\dim_K {\mathcal A}_i.
\end{equation*}
We denote by $n=|H|=\sum_i h_i$ the length of $H$, satisfying  $n=\dim_K (\mathcal A)$. 
\begin{example} Let $\mathcal A=R/I, I=(y^2+x^4,xy+
x^4)$. Then
\begin{equation}
{\mathcal A}^*=R/(y^2,xy,x^5),\text { and } H({\mathcal
A})=(1,2,1,1,1),
\end{equation} 
since
$x(\underline{y^2+x^4})-(y-x^3)(\underline{xy+x^4})=x^5+x^7\in
I\Rightarrow x^5\in I$.
\end{example}
 Let
$H$ be a fixed Hilbert function sequence of length $n$. We now study 
the connection between the Hilbert function strata
$Z_H=\Hilb^H(R)\subset H^{[n]}$, parametrizing all Artinian quotients of $R$ having
Hilbert function $H$, and the analogous subscheme of commuting pairs of matrices,
\begin{equation*}
{\mathcal H}^H(n,K)=\pi^{-1}(Z_H) =\{
\text {pairs }
(A,B) \mid
H(\mathcal A_{A,B})=H\}.
\end{equation*} Here $Z_H$ is locally closed in $H^{[n]}$ \cite[Proposition 1.6]{I1}, and likewise so is ${\mathcal H}^H(n,K)$ in $\mathcal U(n,K)$.  We have the projection
\begin{equation*}
 \tau : Z_H \to G_H ,  \mathcal A\to \mathcal A^*
\end{equation*}
to the irreducible projective variety
$G_H$ parametrizing graded quotients of $R$ having Hilbert function $H$. Each of $Z_H,G_H$ have covers by opens in affine spaces
 of known dimension \cite{Br,I1}; also $\tau $ makes $Z_H$ a locally trivial bundle over $G_H$ with fibres opens in an affine space, and having a 
global section \cite[Theorems 3.13, 3.14]{I1}, but $Z_H$ is not in general a vector bundle over $G_H$ \cite{I1.5}. When $\cha K = 0$ or $\cha K= p>n$
the fiber is an affine space and the covers are by affine spaces \cite[Theorems 2.9, 2.11]{I1}.
 The Nice school studied specializations of
$Z_H$, see work of M.~Granger \cite{Gra} and J.~Yam\'{e}ogo \cite{Yama, Yamb}, but the problem of
understanding the intersection $\overline{Z_H}\cap Z_{H'}$ is in general difficult and
quite unsolved (see \cite{Gu,NV} for some recent progress). Let $Z_{\nu ,n}$ parametrize order $\nu$ colength $n$
ideals $I$ in $R=K\{ x,y\}$:
that is 
\begin{equation*}
Z_{\nu ,n}=\{I\mid M^\nu \supset I, 
M^{\nu+1}\nsupseteq I,{\text { and }} \dim_K R/I=n\}.
\end{equation*}
 J.~Brian\c{c}on's irreducibility result can be stated, denoting by $\overline X$ the Zariski closure of $X$, 
\begin{equation*}
H^{[n]}=\overline{Z_{1,n}}.
\end{equation*}
M. Granger showed, more generally
\begin{theorem}\label{Grangerthm}\cite{Gra} For $\nu\ge 1$ we have
\begin{equation}\label{Grangereq}
\overline{Z_{\nu ,n}}\supset Z_{{\nu+1},n} .
\end{equation}
\end{theorem}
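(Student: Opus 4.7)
The strategy is to construct, for each $I \in Z_{\nu+1,n}$, an explicit one-parameter flat family $\{\widetilde I_t\}_{t \in \mathbb A^1}$ of colength-$n$ ideals in $R = K\{x,y\}$ such that $\widetilde I_0 = I$ and $\widetilde I_t \in Z_{\nu,n}$ for $t \neq 0$. Such a family directly places $I$ in $\overline{Z_{\nu,n}}$, which is the content of \eqref{Grangereq}.

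Fix $I \in Z_{\nu+1,n}$. I would first use the standard basis theory for ideals in $R$ developed in \cite{Br,I1} to pick generators $\{f_1,\ldots,f_r\}$ of $I$ with orders $d_1 \leq \cdots \leq d_r$, all at least $\nu+1$, whose leading forms $f_i^* \in K[x,y]$ generate the initial ideal $I^* = \mathrm{gr}_M(I)$. Since $I \subset M^{\nu+1}$, the space $M^\nu/M^{\nu+1}$ injects into $R/I$, so I may pick $g \in M^\nu \setminus M^{\nu+1}$, with leading form $g^*$ of degree $\nu$, chosen \emph{generically} so that the modified graded ideal $(g^*, f_2^*, \ldots, f_r^*) \subset K[x,y]$ has colength exactly $n$. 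Such a generic choice exists because in two variables one has enough freedom to replace a degree-$(\nu+1)$ minimal generator of $I^*$ by a degree-$\nu$ generator while arranging the new quotient's Hilbert function to be obtained from $H(R/I)$ by moving one box of the bar graph from row $\nu+1$ down to an allowable lower row. I then define the family
\[
\widetilde I \;=\; (f_1 - tg,\ f_2,\ \ldots,\ f_r) \;\subset\; R[t].
\]
Setting $t=0$ recovers $I$, and for $t \neq 0$ the element $f_1 - tg$ has order $\nu$, so $\widetilde I|_t$ has order at most $\nu$; once colength preservation is established, the order is exactly $\nu$ and $\widetilde I|_t \in Z_{\nu,n}$.

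The main obstacle is verifying flatness of $R[t]/\widetilde I$ over $K[t]$, equivalently $\dim_K R/\widetilde I|_t = n$ for each $t \neq 0$. Semicontinuity gives the inequality $\leq n$; the reverse requires the generic choice of $g$. The concrete plan is to show that $\{f_1 - tg, f_2, \ldots, f_r\}$ is itself a standard basis of $\widetilde I|_t$ in $R$ for $t \neq 0$, so that its initial ideal equals $(g^*, f_2^*, \ldots, f_r^*)$ and hence the colength of $\widetilde I|_t$ equals that of $K[x,y]/(g^*, f_2^*, \ldots, f_r^*)$, which is $n$ by construction. The pathology to rule out is, for instance, $g$ dividing $f_1$ in $R$, in which case $f_1 - tg = g\cdot(h - t)$ with $(h-t)$ a unit in $R$ and the ideal collapsing; genericity of $g$ relative to the standard basis of $I$ excludes exactly this kind of degeneration. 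Carrying out the syzygy analysis in $R[t]$ to show no new initial forms of order below $\nu$ appear, thereby pinning down $\mathrm{in}_M(\widetilde I|_t)$ and confirming constancy of the Hilbert function for $t \neq 0$, is the technical heart of the argument.
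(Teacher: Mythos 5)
You are proposing a proof of a statement the paper itself does not prove: Theorem \ref{Grangerthm} is quoted from Granger's memoir \cite{Gra}, where it is the main result of a long induction with standard bases extending Brian\c{c}on's methods in \cite{Br}. Measured against that, your plan has a genuine gap at its central claim: that a \emph{generic} $g\in M^{\nu}\setminus M^{\nu+1}$ can be chosen so that the replaced graded ideal $(g^*,f_2^*,\ldots,f_r^*)$ has colength exactly $n$, and that $(f_1-tg,f_2,\ldots,f_r)$ is then a standard basis of the generic fibre with that initial ideal. Already the first case $\nu=1$, $I=M^2=(x^2,xy,y^2)$, $n=3$, defeats this with the (minimal) standard basis: whichever degree-$2$ generator you delete and whichever linear form $g^*$ you insert, the ideal $(g^*,f_2^*,f_3^*)$ has colength $2$, or infinite colength when $g^*$ divides the two remaining generators -- never $3$. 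Lowering the order of one generator while freezing the others can only lose colength, and genericity of $g$ \emph{minimizes} the colength rather than preserving it. A flat family exhibiting $M^2\in\overline{Z_{1,3}}$ of course exists, e.g. $\widetilde I_t=(y^2-tx,\; xy,\; x^2)$, but for $t\neq 0$ its initial ideal is $(x,y^3)$: the syzygy $y\,(y^2-tx)+t\,(xy)=y^3$ creates a \emph{new} initial form of higher degree, so the deformed generators are not a standard basis, the initial ideal is not the naive replacement, and the generic Hilbert function $(1,1,1)$ is not produced by your ``move one box'' recipe applied to $(1,2)$.

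The example also shows what a repair would require and why it is not a routine verification. One can succeed here only by adjoining extra, higher-order elements to the frozen part of the generating set (in effect $x^3$, so that the graded replacement becomes $(y,x^3)$ of colength $3$) and by choosing $g$ \emph{non-generically} and in a way correlated with the whole standard basis; your recipe supplies no mechanism for selecting these redundant generators or the special $g$, and its stated justification (``enough freedom in two variables'') is exactly what fails. Proving that such adapted deformation data exist for an arbitrary ideal of order $\nu+1$, while controlling all the new initial forms produced by syzygies so that the colength stays exactly $n$ for $t\neq0$, is precisely the content of Granger's proof, which proceeds by chains of carefully constructed deformations (in the spirit of Brian\c{c}on's vertical and horizontal deformations) rather than by a single generic pencil on one generator. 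As written, the step you defer as the ``technical heart'' is not a technical check but the theorem itself, and the specific mechanism proposed is false in the simplest instances.
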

We let $\mathcal U_{\nu ,n} =\pi^{-1}(Z_{\nu,n} )$.
\begin{corollary}\label{closureUnucor} Fix $n$.  Then for $\nu\ge 1$ we have
\begin{equation}
\overline{\mathcal U_{\nu ,n}}\supset \mathcal U_{\nu+1,n}.
\end{equation}
\end{corollary}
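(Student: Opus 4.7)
The plan is to derive the corollary directly from Granger's Theorem \ref{Grangerthm} by pulling back the inclusion $\overline{Z_{\nu,n}}\supset Z_{\nu+1,n}$ under the morphism $\pi:\mathfrak W\to H^{[n]}$ of Lemma \ref{freeactlem}. Since taking preimages preserves inclusions, we get at once $\pi^{-1}(\overline{Z_{\nu,n}})\supset\pi^{-1}(Z_{\nu+1,n})=\mathcal U_{\nu+1,n}$, so the only substantive step is to identify $\pi^{-1}(\overline{Z_{\nu,n}})$ with $\overline{\pi^{-1}(Z_{\nu,n})}=\overline{\mathcal U_{\nu,n}}$.

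For that identification I would exploit the fact that by Lemma \ref{freeactlem} the free $Gl(V)$-action on $\mathfrak W$ makes $\pi$ a geometric (in fact principal-bundle) quotient, hence a flat surjective morphism of finite presentation, and in particular an open map. For any continuous open map $f:Y\to Z$ and any subset $X\subset Z$ one has $f^{-1}(\overline{X})=\overline{f^{-1}(X)}$: the inclusion $\supset$ is automatic from continuity, while for $\subset$, if $y\in f^{-1}(\overline{X})$ and $U$ is any open neighbourhood of $y$, then $f(U)$ is an open neighbourhood of $f(y)\in\overline{X}$, hence meets $X$, so $U$ meets $f^{-1}(X)$. Applying this with $f=\pi$ and $X=Z_{\nu,n}$ gives the desired equality as closed subsets of $\mathfrak W$.

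Putting the two ingredients together,
\begin{equation*}
\overline{\mathcal U_{\nu,n}}=\overline{\pi^{-1}(Z_{\nu,n})}=\pi^{-1}(\overline{Z_{\nu,n}})\supset\pi^{-1}(Z_{\nu+1,n})=\mathcal U_{\nu+1,n},
\end{equation*}
which is exactly the statement of the corollary.

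The only real obstacle, and a mild one, is to verify that $\pi$ is genuinely an open morphism: the free $Gl(V)$-action must be known to endow $\pi$ with the structure of a principal bundle (at least étale-locally), not just that of a set-theoretic orbit map. This is precisely what Lemma \ref{freeactlem} (combined with Theorem \ref{Barthm} for surjectivity onto $H^{[n]}$) records, so once this is invoked the rest of the argument is formal.
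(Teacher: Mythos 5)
Your proof is correct and follows exactly the paper's route: the authors also deduce the corollary as "an immediate consequence of Granger's theorem and Lemma \ref{freeactlem}," i.e.\ by pulling back $\overline{Z_{\nu,n}}\supset Z_{\nu+1,n}$ along $\pi$. You merely make explicit the point the paper leaves tacit, namely that $\pi^{-1}(\overline{Z_{\nu,n}})=\overline{\pi^{-1}(Z_{\nu,n})}$ because the $Gl(V)$-quotient map $\pi$ is open (one could equally use that it is a topological quotient map with saturated invariant subsets), which is a correct and adequate justification.
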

\begin{proof} This is an immediate consequence of Granger's theorem and Lemma \ref{freeactlem}.
\end{proof}
\par\noindent Recall that when an Artinian algebra $A$ has embedding dimension at most two ($h_1\le 2$) its  Hilbert function $H(A)$ 
satisfies, (see \cite{Mac1, Br,I1})
\begin{equation}\label{cod2eq}
H=(1,2,\ldots
,\nu,h_\nu,\ldots ,h_j), \nu\ge h_\nu\ge
\ldots
\ge h_j>0,
\end{equation}
Here, writing $A=R/I, R=K\{ x,y\}$, we have that $\nu$ is the \emph{order} $\nu (I)$ of the ideal $I$, namely the smallest initial degree of any element of $I$. (When $\nu (I)=1$,  $H=(1,1,\ldots
,1)$: we regard this as also a sequence satisfying \eqref{cod2eq}.)\par
 Henceforth, by \emph{Hilbert function} we will mean one of codimension at most two, so a 
sequence satisfying \eqref{cod2eq}. The \emph{length} of a Hilbert function is $n=\sum_0^j h_i$. The \emph{socle degree} of $H$ is the integer $j$ from \eqref{cod2eq}, and it
is also the socle degree -- maximum nonzero power of the maximal ideal -- of any Artinian algebra of Hilbert function $H$.
\begin{definition}\label{diagdef} Recall that we arrange the Ferrer's graph (Young diagram) of the partition $P=(p_1\ge \cdots \ge p_t)$ with 
the largest row of length $p_1$ at the top. The \emph{diagonal lengths} $H_P$ of a
partition $P$ are the lengths of the lower left to upper right diagonals of the
Ferrer's graph of $P$. The \emph{dual partition} $\hat{P}$ to $P$ is obtained by switching rows and columns in the Ferrer's graph: 
\begin{equation} \hat{P}_i=\#\{p_k\in P\mid p_k\ge i\} .\end{equation}  \par Given a Hilbert function $H$ as in \eqref{cod2eq}, we denote by
$P(H)$ the unique partition having diagonal lengths $H$ and $\nu$ strictly decreasing parts.  It satisfies
$P(H)=(p_1,\ldots )$ with $p_i$ the length of the
$i$-th row of the bar graph of $H$. In other words, were the sequence $H$ rearranged in descending order, then
$P(H)$ would be the dual partition to $H$.
\end{definition}
\begin{example}
  For $H=(1,2,3,2,1),\,
P(H)=(5,3,1)$. The partitions $P=(4,2,1,1,1)$ and $P'=(3,3,3)$ also have diagonal lengths $H=(1,2,3,2,1)$, but are incomparable in the partial order \eqref{poparteq}.
We show below that $P(H)$ is maximum among the partitions of diagonal lengths $H$.
\end{example}
\begin{remark} Fixing a Hilbert function $H$, the elements of the set $\mathcal P(H)$ of partitions having diagonal lengths $H$ with a certain grading (by the number of difference-one hooks), correspond bijectively to the
cells in a cellular decomposition of the projective variety $G_H$, graded by dimension (see \cite{IY}). The Hilbert function $H$ determines a certain product $B(H)$ of rectangular partitions; and the elements of
 $\mathcal P(H)$ correspond bijectively to sequences of subpartitons of $B(H)$, in what is termed a ``hook code'' in \cite[Section 3D]{IY}. Thus, $\mathcal P(H)$ is enumerated by a certain product of 
binomial coefficients \cite[Theorem 3.30]{IY}.
\end{remark}\par\smallskip
 There is a natural partial order on the set $\mathcal {H}(n)$ of
Hilbert functions of codimension at most two, having length
$n$ (see \eqref{cod2eq}), given by 
\begin{equation}\label{poshilbeq}
H\le H' \Leftrightarrow \forall u, 0\le u<n, \sum_{k\le u} H_k\le \sum_{k\le u} H'_k.
\end{equation}
For example, $(1,1,1,1,1)< (1,2,1,1)<(1,2,2)$. \par
 The maximality of $P(H)$ in the next Lemma \ref{PHlem}\eqref{PHlemB} follows from the irreducibility of $G_H$ and considering the cells corresponding to each partition $P$ of diagonal lengths $H$ in $G_H$
(see \cite[Theorem 3.12ff]{IY}).  We include a simple direct proof of \eqref{PHlemB}.
\begin{lemma}\label{PHlem}\begin{enumerate}[i.]
\item\label{PHlemA} The assignment $H\to P(H)$ determines an order-reversing bijection between the partially ordered set (POS) of Hilbert functions of length $n$ (see
\eqref{poshilbeq}), and the POS of partitions of $n$ having
decreasing parts (see \eqref{poparteq}).
\item\label{PHlemB} Let $P$ have diagonal lengths $H$. Then $P(H)\ge P$ in the partial order \eqref{poparteq}.
\end{enumerate}
\end{lemma}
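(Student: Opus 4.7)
I would prove (i) by making the bijection explicit via $P(H)_i = \#\{k : H_k \ge i\}$, so that $P(H)$ is literally the dual of the descending rearrangement of $H$, and then deduce the order-reversing property from the standard dominance duality $P \ge P' \Leftrightarrow \hat{P} \le \hat{P}'$ combined with a comparison of the non-increasing tails. For (ii) I would use a short counting argument about boxes on diagonals.

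For the bijection in (i): from $P(H)_i := \#\{k : H_k \ge i\}$ the difference $P(H)_i - P(H)_{i+1} = \#\{k : H_k = i\}$ is at least $1$ for $1 \le i \le \nu - 1$, because the staircase $(1,2,\ldots,\nu)$ of $H$ hits each such value exactly once. Hence $P(H)$ has $\nu$ strictly decreasing positive parts. Conversely, a direct computation of diagonal lengths of a strict partition $(p_1 > \cdots > p_\nu > 0)$ yields a sequence of the form \eqref{cod2eq}, giving the inverse. For the order reversal, observe
\begin{equation*}
\sum_{i=1}^{m} P(H)_i \;=\; \sum_k \min(H_k, m),
\end{equation*}
so $P(H) \ge P(H')$ in the dominance order \eqref{poparteq} is equivalent to $\sum_k \min(H_k, m) \ge \sum_k \min(H'_k, m)$ for every $m$. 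I would then split each $H$ into its staircase $(1,2,\ldots,\nu)$ and its non-increasing tail $T(H) = (h_\nu,\ldots,h_j)$ (a partition). Writing $P(H)_i = \max(\nu - i + 1, 0) + \widehat{T(H)}_i$, the case $\nu(H) = \nu(H')$ is clean: the staircase contributions cancel from both sides, and dominance of $P(H)$ over $P(H')$ becomes dominance of $\widehat{T(H)}$ over $\widehat{T(H')}$, equivalently (by dominance duality on partitions) $T(H) \le T(H')$ in partition dominance, which is exactly the restriction of the positional order \eqref{poshilbeq} to the tail region (positions $\ge \nu$); outside that region the partial sums agree automatically. The case $\nu(H) \ne \nu(H')$ would be handled by absorbing the excess staircase of one Hilbert function into the tail region, reducing again to a partition dominance inequality.

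For (ii), let $P$ be any partition whose diagonal lengths are $H$. The partial sum $\sum_{i \le m} P_i$ equals the total number of boxes in the top $m$ rows of the Ferrer's graph of $P$. Each diagonal $d$ of $P$ contains exactly $H_{d-1}$ boxes, and at most $\min(H_{d-1}, m)$ of these can sit in the top $m$ rows (since any single diagonal meets each row in at most one box). Summing over diagonals,
\begin{equation*}
\sum_{i \le m} P_i \;\le\; \sum_d \min(H_{d-1}, m) \;=\; \sum_{i \le m} P(H)_i,
\end{equation*}
which gives $P \le P(H)$ in dominance. Equality is achieved by $P(H)$ itself because its $\nu$ strictly decreasing parts force the boxes of each diagonal $d$ to occupy precisely the top $H_{d-1}$ rows.

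The main obstacle I anticipate is the case $\nu(H) \ne \nu(H')$ in the order-reversing step of (i): aligning the positional partial-sum order on $H$ against the multiset-based dominance of the rearrangement requires careful bookkeeping across the boundary where the two staircases differ in length, although in each sub-case the argument reduces to an ordinary dominance inequality between partitions, so no new combinatorial input is needed.
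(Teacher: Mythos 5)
Your part (ii) is correct and takes a genuinely different route from the paper's: you bound, antidiagonal by antidiagonal, how many of the $H_{d-1}$ boxes of the $d$-th diagonal can lie in the top $m$ rows (at most $\min(H_{d-1},m)$, since a diagonal meets each row at most once), and compare with the identity $\sum_{i\le m}P(H)_i=\sum_d \min(H_{d-1},m)$; the paper instead truncates $P$ to its first $u$ rows, staggers them into an adjusted Ferrers graph and pushes boxes up the columns, comparing only the $u$-th partial sums. Your count is shorter and gives all the dominance inequalities \eqref{poparteq} at once. The explicit formula $P(H)_i=\#\{k: H_k\ge i\}$ and the bijection claim in (i) are also fine and agree in substance with the paper, which phrases the same construction through the dual partition $\hat P$ and a rearrangement of its parts.

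The gap is exactly where you flag it, and it is not routine bookkeeping: the order-reversing statement in (i) is an equivalence, and you prove it only when $\nu(H)=\nu(H')$. Since both $H\le H'$ and $P(H)\ge P(H')$ force $\nu(H)\le\nu(H')$, the asymmetric case $\nu(H)<\nu(H')$ is unavoidable, and it is where the real content lies. ``Absorbing the excess staircase into the tail'' does not reduce formally to a single partition-dominance inequality: after cancelling the common staircase $(1,\ldots,\nu)$ you must compare prefix sums of the non-increasing tail $T(H)$ with those of the sequence $(\nu+1,\ldots,\nu',h'_{\nu'},\ldots,h'_{j'})$, and dominance of the sorted rearrangements alone does not yield this, because the sorted prefix sums on the $H'$-side are \emph{larger} than the positional ones (the surplus entries $\nu+1,\ldots,\nu'$ need not be the largest elements of that multiset), so the implication runs the wrong way. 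One must additionally use that every part of $T(H)$ is at most $\nu$, hence strictly smaller than each surplus entry $\nu+1,\ldots,\nu'$ --- this is precisely the extra observation the paper's proof invokes after deleting the common parts $1,\ldots,\nu$ from $\hat P$ and $\hat{P'}$ (using that dominance in \eqref{poparteq} is preserved under removal of a common part) --- and both implications of the equivalence must then be checked. Alternatively you could finish within your own framework via $\sum_{i\le m}P(H)_i=\sum_k\min(H_k,m)$: the direction $H\le H'\Rightarrow P(H)\ge P(H')$ falls out easily, but the converse requires, for each position $u$, a choice of threshold $m$ adapted to $H'$ (e.g.\ $m=h'_{u+1}$ when this is at most $\nu$, with the remaining positions handled by termwise comparison). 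As it stands, part (i) is incomplete.
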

\begin{proof}  We first show  \eqref{PHlemA}. It is well known that the correspondence taking $P$ to $\hat{P}$
is an order reversing involution on the POS of partitions of $n$ \cite[Lemma 6.3.1]{CoM}. It takes a partition $P=(p_1,p_2, \ldots ,p_v)$ having $\nu$ decreasing parts, to a partition $Q=\hat{P}$
 having no gaps among the integers $(1,2,\ldots ,\nu )$. Given $P$ we let $H_P=(1,2,\ldots v, h_v,\ldots ,h_j)$, be the sequence obtained by rearranging  $\hat{P}$, so that
$H_P$ begins $(1,2 \dots ,\nu)$, and ends with the rest of the parts of $\hat{P}$ in non-increasing order. 
Then $H_P$ satisfies \eqref{cod2eq}, and $P(H_P)=P$. Evidently, $P\to H_P$ is a bijection as stated in \eqref{PHlemA}. \par It remains to show that if two partitions $Q=\hat{P}$, and $Q\, '=\hat{P\,'}$ with maximum parts $\nu,\nu'$ respectively, and having no gaps among
$(1,2,\ldots ,\nu )$ and $(1,2,\ldots ,\nu\, ')$, respectively, satisfy $Q\le Q\,'$ then the rearranged sequences $H$, $H'$ satisfy $H\le H'$ 
in the order \eqref{poshilbeq}, and vice-versa. It is well known that the partial order between two partitions of $n$ is preserved
by the operation of either removing (or, respectively, adding) a common part $a$ to each, forming partitions of $n-a$ (or, respectively $n+a$).  Removing in this way the
parts $(1,2, \ldots ,\nu)$, and placing those parts first leaves remainder partitions  $\alpha(Q)\le \alpha (Q\,')$. Now the first sequence is $H=(1,2,\ldots, \nu , \alpha (Q))$ and we 
have $H\le (1,2,\ldots \nu, \alpha (Q'))$ in the partial order obtained by formally extending that of \eqref{poshilbeq} to arbitrary sequences.
Finally, rearranging the parts $(\nu +1,\ldots ,\nu\,')$ (if any) of $\alpha(Q')$ first, (so just after $\nu$), puts the second sequence in the form $H'$; we have $H\le H'$  since each of $\nu+1,\ldots \nu'$ are larger
 than every part of $\alpha (Q)$. This argument reverses, showing that the mapping $H\to P(H)$ inverts the partial order. This completes the proof of \eqref{PHlemA}.

\par
To show \eqref{PHlemB}, let $P: p_1\ge ... \ge p_s$ have diagonal lengths $H$ and consider the partition $P(u)=(p_1,\ldots ,p_u)$ comprised of the first $u$ rows of $P$. Rearranging the rows of the Ferrer's graph of $P(u)$ in staggered fashion,
by advancing the $v$-th row from the top (longest) by 
$v-1$, and forming an adjusted Ferrer's graph $\AFG (P(u))$ we see that the sequence $H(u)$ given by the diagonal lengths of $P(u)$ is given by 
\begin{equation}
H(u)_i= \text { the length of the $i$-th column of } \AFG (P(u)).
\end{equation}
The partition $P(H(u))$ is obtained by pushing all squares in $\AFG (P(u))$ upward, so that in each column there are no gaps. Thus, $P(H(u))$ partitions the same number $\mid P(u)\mid$ as
$P(u)$. Since $H_i\ge H(u)_i$ for each $i$, the Ferrer's graph of the partition $P(H)$ includes that of $P(H(u))$ (strictly if $p_{u+1}\not=0$). Thus for each $u$
\begin{equation}
\sum_{k\le u} p_k = \sum_{k\le u} P(H(u))_k=\mid P(u)\mid \,\le \sum_{k\le u} P(H)_k,
\end{equation}
showing that $P\le P(H)$ in the partial order \eqref{poparteq}.
\end{proof} \par
\begin{example}\label{posexample} The partitions $P=(6,4,3), P'=(6,4,2,1)$ with decreasing parts satisfy $P\ge P'$, so their duals $\hat{P}=(3,3,3,2,1,1), \hat P'=(4,3,2,2,1,1)$
satisfy $Q=\hat P\le Q\,'=\hat P\,'$. Since $\nu=3$ this implies that $\alpha (Q)=(3,3,1)\le \alpha (Q\,')=(4,2,1)$ in the POS of equation \eqref{poparteq},
 implying $H_P=(1,2,3,3,3,1)\le \, H_{P'}=(1,2,3,4,2,1)$ in the POS of \eqref{poshilbeq}.
\end{example}
Let $I$ be an ideal of colength $n$ in $R=K\{x,y\}$ and let $H=H({\mathcal A}),
{\mathcal A}=R/I$. Recall $\nu=$ order of
$I $; so $  M^\nu\supset I, M^{\nu +1}\nsupseteq I, $ where $M=(x,y).$ Consider the deg lex partial order, 
\begin{equation*}
1<y<x<y^2<yx<x^2\cdots
\end{equation*}
 and 
denote by $E=E(I)$ the monomial initial ideal of $I$ in this order.  The monomial cobasis $E(I)^c =\mathbf{N}^2-E(I)$ may be
seen as the Ferrer's graph of a partition $P=P(E)$ of diagonal lengths
$H$. Conversely, given a partition $P=(k_0,\ldots ,k_{\nu-1})$ with $\nu$ nonzero parts (the notation is
from the standard bases introduced just below in Definition \ref{stdbasisdef}), we define the monomial ideal $E_P$
\begin{equation}\label{mpeq}
E_P=(x^{k_0}, yx^{k_1}, y^2x^{k_2}, \ldots, y^{\nu-1}x^{k_{\nu-1}}, y^\nu), 
\end{equation}
whose cobasis $E_P^c$ is the complementary set, of monomials $E_P^c=\mathbf{N}^2-E_P$ (where the pair of non-negative integers $(a,b)\in \mathbf{N}^2$ denotes $x^ay^b$).
\begin{definition}\label{stdbasisdef} The ideal $I\subset R=K\{ x,y\}$ has 
\emph{standard basis} $(f_\nu,\ldots ,f_0)$ in the direction $x$ if $I$ has a (not necessarily minimal)
generating set $(f_0,\ldots ,f_\nu)$ of the following form.  
\begin{align}\label{stdbasiseq}
&(f_\nu=g_\nu, f_{\nu -1}=x^{k_{\nu -1}}g_{\nu -1},\ldots ,f_0=x^{k_0}g_0),
\text { where }\\ 
 g_i&=y^i+
h_i,\,\, h_i\in M^i\cap k[x]\langle y^{i-1},\ldots, y, 1\rangle\notag
\end{align}
and $k_0\ge k_1\ge \ldots \ge k_{\nu -1}$ \cite[Definition 3.9ff]{IY}. We term the basis \emph{normal} if $k_0 > k_1> \ldots > k_{\nu -1}$ \cite{Br,I1}.
We will sometimes refer to these as ``standard generators', or ``normal generators'', respectively.
\end{definition}
Then we have $E=E(I)$ is the monomial ideal of \eqref{mpeq} and $E^c$ is the set of monomials
\begin{equation}\label{cobasiseq}
E^c=\langle 1,x,\ldots x^{k_0-1}; y,yx,\ldots yx^{k_1-1}; \ldots ;y^{\nu-1}, \ldots ,y^{\nu-1}x^{k_{\nu-1}-1}\rangle .
\end{equation}
The existence of a normal basis in the direction $x$ does not depend on 
the choice of $y\in R_1$, such that $\langle y,x\rangle=R_1$.
Note also that for a normal basis the decreasing sequence $P=(k_0,k_1,\ldots ,k_{\nu-1})$ satisfies $P=P(H)$, where
$H=H(R/I)$ is the Hilbert function of ${\mathcal A}=R/I$. \par
The following result is standard, see for example \cite[Lemma 1.4]{I1}. We denote by $\langle E^c\rangle$ the $K$-vector space spanned by $E^c$.
\begin{lemma}\label{standbasiseqlem} The condition \eqref{stdbasiseq}
 is equivalent to 
\begin{equation}\label{complementeq}
\forall i\ge 0, \langle E^c\rangle\cap M^i\oplus I\cap M^i=M^i, \text { an internal direct sum}.
\end{equation}
\end{lemma}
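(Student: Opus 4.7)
The lemma asserts the equivalence between an ideal $I$ having a standard basis of the form \eqref{stdbasiseq} (with decreasing $k_0 \ge \ldots \ge k_{\nu-1}$) and the filtered direct sum decomposition \eqref{complementeq}. My plan is to prove each direction separately, using iterated division on one side and a careful analysis of the monomials appearing in the complement on the other.

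For the forward direction (\eqref{stdbasiseq} $\Rightarrow$ \eqref{complementeq}), given the standard generators $f_\nu, \ldots, f_0$, I would apply iterative normal form reduction. For any $m \in M^i$, expand $m$ as a $K$-linear combination of monomials. Each monomial $y^a x^b \in E$ can be rewritten: for $a < \nu$ with $b \ge k_a$, from $f_a = x^{k_a}(y^a + h_a)$ we get $y^a x^b = x^{b-k_a} f_a - x^b h_a$, so $y^a x^b \equiv -x^b h_a \pmod{I}$, where $-x^b h_a$ is a combination of monomials of strictly smaller $y$-degree; for $a \ge \nu$ use $f_\nu$ similarly. Since $f_a \in M^{a+k_a}$, the correction $x^{b-k_a}f_a$ lies in $M^{a+b} \subseteq M^i$, so the reduction preserves $M^i$-membership. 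Descending induction on $y$-degree terminates the process with $m = e + f$ where $e \in \langle E^c\rangle \cap M^i$ and $f \in I \cap M^i$. Directness of the sum follows because $|E^c| = \sum k_a = n = \dim_K R/I$, so the surjection $\langle E^c\rangle \twoheadrightarrow R/I$ coming from the sum at level $i=0$ forces an isomorphism, hence $\langle E^c\rangle \cap I = 0$, and the filtered version inherits directness.

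For the reverse direction (\eqref{complementeq} $\Rightarrow$ \eqref{stdbasiseq}), my plan is to construct the standard generators directly from the decomposition. For each $a \in \{0, \ldots, \nu\}$ (with convention $k_\nu = 0$), the element $y^a x^{k_a}$ lies in $M^{a+k_a}$, so by \eqref{complementeq} decomposes as $y^a x^{k_a} = e_a + f_a$ with $e_a \in \langle E^c\rangle \cap M^{a+k_a}$ and $f_a \in I \cap M^{a+k_a}$. I then verify $f_a$ has the standard form by analyzing the constraints on monomials $y^c x^d$ that can appear in $e_a$: cobasis membership requires $c < \nu$ and $d < k_c$, while $e_a \in M^{a+k_a}$ requires $c + d \ge a + k_a$. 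The case $c = a$ forces $d \ge k_a$, contradicting $d < k_a$, and the case $c > a$ combined with the decreasing $k_i$ (giving $k_c \le k_a - (c-a)$ for strict decrease) yields $k_c + c \le k_a + a$, contradicting the requirement $c + d \ge a + k_a$ with $d \le k_c - 1$. Hence $e_a$ has only monomials of $y$-degree $< a$, and these satisfy $d \ge a + k_a - c > k_a - 1$, making $e_a$ divisible by $x^{k_a}$. Setting $h_a = -e_a/x^{k_a}$ gives $f_a = x^{k_a}(y^a + h_a)$ with $h_a \in M^a \cap k[x]\langle y^{a-1}, \ldots, 1\rangle$ as required.

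The main obstacle is the constraint analysis in the reverse direction: the decreasing property of the $k_i$'s is essential for ruling out cobasis monomials of $y$-degree $\ge a$ in $e_a$, and only the strict case goes through by a purely combinatorial inequality. For the general (weakly decreasing) standard-basis case, the boundary situations where $k_c = k_a$ require either a secondary reduction step using the previously constructed $f_b$ for $b > a$, or the observation that for a given $I$ the direct sum condition pins down the admissible $k_i$'s, ensuring compatibility with the analysis.
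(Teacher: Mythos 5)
The paper gives no argument for this lemma (it is quoted as standard, with a citation to \cite[Lemma 1.4]{I1}), so your proposal has to stand on its own; as written it has gaps in both directions. In the forward direction your reduction argument for the \emph{spanning} part is fine, but the directness step is circular: you assert $\sum_a k_a = n = \dim_K R/I$, yet this equality is not a hypothesis of \eqref{stdbasiseq} --- it is equivalent to the statement that the $f_a$ form a standard basis in the Gr\"obner sense, i.e.\ that $E(I)=E_P$, which is exactly the nontrivial content you are trying to establish. (If one reads the $k_a$ as arbitrary data of the form \eqref{stdbasiseq}, the count is even false: for $f_0=x^5$, $f_1=x(y+x)$, $f_2=xy^2$, $f_3=y^3$ one gets $x^3\in I$, so the colength is $5$, not $7$.) The gap is repairable, but by a different and simpler observation: once $E=E(I)$ is the initial monomial ideal, directness is immediate, since a nonzero element of $\langle E^c\rangle$ has its initial monomial in $E^c$, whereas every nonzero element of $I$ has initial monomial in $E$; so $\langle E^c\rangle\cap I=0$ at every level. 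You should replace the dimension count by this argument rather than lean on an identity that presupposes the conclusion.

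The reverse direction is where the real trouble lies, and you have correctly located it but not resolved it. Your constraint analysis excludes cobasis monomials $y^cx^d$ in $e_a$ with $c=a$, and with $c>a$ only when the $k_i$ strictly decrease; in the weakly decreasing case monomials $y^cx^d$ with $c>a$, $d<k_c$ and $c+d>a+k_a$ are not excluded by degree considerations, and neither of your proposed patches works: you cannot reduce such a term by the previously constructed $f_c$, because its lead monomial is $y^cx^{k_c}$ and $d<k_c$ would force a negative power of $x$, and the direct-sum condition does not ``pin down'' the $k_i$ in a way that removes the boundary cases. The phenomenon is genuine: for $I=(x^2-xy^2,\,y^3)$ one has $E(I)=(x^2,y^3)$, $k=(2,2,2)$, the filtered decomposition \eqref{complementeq} can be checked level by level, yet the decomposition of $x^{2}=x^{k_0}$ is $xy^2+(x^2-xy^2)$, so $e_0$ contains the higher-degree cobasis monomial $xy^2$ of larger $y$-degree, and no generator of the form $x^{k_0}g_0$, $x^{k_1}g_1$ or $y^2+h_2$ of order two lies in $I$ at all. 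So the weakly decreasing case cannot be finished by the argument you sketch; it requires either the more carefully formulated statement and proof of \cite[Lemma 1.4]{I1} (where the pattern $E$ and the degree bookkeeping are set up so that such boundary terms are controlled), or a genuinely new idea, and your last paragraph amounts to acknowledging rather than closing this gap.
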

This notion of standard basis is stronger than just ``$E^c$ is a complementary
basis to $I$ in $R\,$'', used in \cite{BH,NS}.\par The following Lemma is well known, for
example \cite[Lemma 3]{Bar} shows that for a generic $A$ in $\mathcal N_B$ the pair $(A,B)$ has a cyclic vector, and by
\cite{NS} this implies $\dim_K K[A,B]=n$. We thank A.~Sethuranam and T. Ko\v{s}ir for discussions of these topics that led to our
proof below.
\begin{lemma}\label{barnsthm}\begin{enumerate}[i.]
\item\label{barnsthmi} Let $B$ be an $n\times n$ nilpotent Jordan
matrix of partition $P$ and let
$A$ be generic in
$\mathcal N_B$. Let $K$ be an infinite field. Then
\begin{equation*}
\dim_K K[A,B]=n.
\end{equation*}
\item\label{barnsthmii}\cite[Lemma 3]{Bar}. Let $B $ be nilpotent, and $C\in \mathcal N_B$ and assume $K$ is algebraically closed. Then there exists $A\in \mathcal N_B$ such that the pencil $A+tC\subset \mathcal N_B$, and the pair $(A,B)$ has
a cyclic vector. 
\end{enumerate}
\end{lemma}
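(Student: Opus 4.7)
The plan is to establish part (ii), essentially due to Baranovsky, and then deduce part (i) via an openness and irreducibility argument combined with the Neubauer--Saltman equivalence \cite{NS}.

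For part (ii), I would exploit the Wedderburn decomposition $\mathcal C_B = \mathcal L \oplus \mathcal J$ from the proof of Lemma \ref{nilpBirredlem}, where $\mathcal L \cong \prod_u \mathrm{Mat}_{r_u}(K)$ is the semisimple subalgebra indexed by the distinct parts $p_u$ of $P$ of multiplicity $r_u$, and $\mathcal J$ is the Jacobson radical (a nil ideal). An element $X = (\ell, j) \in \mathcal C_B$ is nilpotent exactly when $\ell$ is nilpotent in $\mathcal L$. Writing $C = (\ell_C, j_C)$, I would choose a maximal flag in each factor $K^{r_u}$ that makes $\ell_C$ strictly upper triangular. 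Then any $\ell_A$ that is strictly upper triangular in the same flag satisfies $\ell_A + t\ell_C$ strictly upper triangular, hence nilpotent, for every $t \in K$. I would then construct $\ell_A$ concretely as a \emph{cascade} shift between Jordan blocks of $B$ ordered by decreasing size (compatibly with the chosen flag), so that the top basis vector of the largest Jordan block serves as a cyclic vector for the pair $(A,B)$, where $A = (\ell_A, 0) \in \mathcal N_B$: iterated application of $B$ sweeps through a single block, while $A$ passes to the next block.

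For part (i), define
\[
U = \{\, A \in \mathcal N_B : \dim_K K[A,B] = n \,\} \subset \mathcal N_B.
\]
By Gerstenhaber's bound $\dim_K K[A,B] \le n$ for any commuting pair, so the condition defining $U$ is the non-vanishing of some $n \times n$ minor of the matrix whose rows are the matrices $A^i B^j$ for $0 \le i + j \le n-1$; this is a Zariski open condition on $A$. Part (ii), applied with $C = 0$ over the algebraic closure $\overline{K}$, produces an $A_0 \in \mathcal N_B(\overline{K})$ such that $(A_0, B)$ admits a cyclic vector; by \cite{NS} this is equivalent to $\dim_{\overline{K}} \overline{K}[A_0, B] = n$, so $U$ is nonempty. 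Since $\mathcal N_B$ is irreducible (Lemma \ref{nilpBirredlem}), $U$ is nonempty open and hence dense in $\mathcal N_B$. When $K$ is infinite, dense open subsets of $\mathcal N_B$ contain $K$-rational points (the Wedderburn parametrization exhibits $\mathcal N_B$ as birational to a product of the nilpotent cones of the $\mathrm{Mat}_{r_u}(K)$ with the affine space $\mathcal J$, and each nilpotent cone has Zariski-dense $K$-points over an infinite $K$). Hence for generic $A \in \mathcal N_B(K)$ we have $\dim_K K[A,B] = n$.

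The main obstacle lies in part (ii), where two conditions -- pencil nilpotency and existence of a cyclic vector -- must be met simultaneously by a single element $A$. Pencil nilpotency is enforced by placing $\ell_A$ in the same Borel subalgebra of $\mathcal L$ as $\ell_C$, while cyclicity requires a specific cross-block action reflecting the Jordan structure of $B$; these two constraints must be reconciled by a careful choice of flag. Once (ii) is in hand, the deduction of (i) is the standard argument that a nonempty open subset of an irreducible variety is dense, combined with Gerstenhaber's upper bound and the Neubauer--Saltman equivalence.
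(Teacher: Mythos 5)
Your strategy is, in substance, the paper's own, with the logical order of the two parts reversed. The paper proves (i) first, by exhibiting the explicit pair coming from the monomial ideal $E_P$ of \eqref{mpeq}: $B=\times x$ and $A=\times y$ acting on the cobasis \eqref{cobasiseq} form a commuting nilpotent pair with $\dim_K K[A,B]=n$ (this is exactly your ``cascade''), and then combines Gerstenhaber's bound with upper semicontinuity of $\dim_K K[A,B]$ on the irreducible variety $\mathcal N_B$ (Lemma \ref{nilpBirredlem}); it then deduces (ii) by using \cite[Lemma 2.3]{Bas2} to conjugate $C$ by an element of $\mathcal C_B^{\ast}$ into a fixed maximal nilpotent subalgebra $S_B$ of $\mathcal C_B$ and taking $A$ conjugate to a general element of $S_B$, so the pencil stays in $\mathcal N_B$ and (i) applies. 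Your route---prove (ii) directly via a flag argument plus an explicit cyclic cascade, then deduce (i) by openness, irreducibility, and density of $K$-points---uses the same ingredients: your simultaneous strict triangularization of the semisimple components is precisely Basili's conjugation into $S_B$, and your deduction of (i) is the paper's semicontinuity argument. Note that you only need, and only correctly have, the implication ``cyclic vector $\Rightarrow \dim_K K[A,B]=n$'' from \cite{NS}; the converse is false in general (e.g.\ $K[E_{12},E_{13}]\subset M_3(K)$ has dimension $3$ and no cyclic vector), so it should not be called an equivalence, though this does not affect your use of it.

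The one genuine misstep is the assertion that the cyclic element can be taken of the form $A=(\ell_A,0)$, i.e.\ with zero component in the radical $\mathcal J$. Every element of the Wedderburn complement $\mathcal L\cong\prod_u \mathrm{Mat}_{r_u}(K)$ preserves the decomposition $V=\oplus_u V_u$ into the sums of the Jordan blocks of a fixed size $p_u$; hence if $A\in\mathcal L$ then each $V_u$ is $(A,B)$-invariant, and $(A,B)$ has no cyclic vector as soon as $P$ has two distinct part sizes (in the extreme case of pairwise distinct parts, $\mathcal L$ is a product of fields, so $\ell_A$ nilpotent forces $A=0$). The pieces of the cascade that jump between Jordan blocks of different sizes necessarily lie in $\mathcal J$. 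This is repairable inside your own framework: take $A=(\ell_A,j_A)$ with $\ell_A$ the within-size-class shifts, made strictly triangular for the flag triangularizing $\ell_C$ (which also requires choosing the Jordan-block decomposition of each $V_u$ adapted to that flag, i.e.\ conjugating by an element of $\mathcal C_B^{\ast}$ as in \cite[Lemma 2.3]{Bas2}), and $j_A$ the between-class transition maps. Nilpotency of the pencil is unaffected, since it is detected on the $\mathcal L$-component, and the repaired $A$ is conjugate to the paper's monomial pair, hence cyclic with cyclic vector the generator of the longest Jordan string. With that correction both parts go through.
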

\begin{proof} For \eqref{barnsthmi} consider the monomial ideal $E_P$; then the matrix of
$B=\times x$ acting on the basis $E_P^c$ of \eqref{cobasiseq} is the Jordan matrix of partition $P$; the matrix of $A=\times y$ has the conjugate Jordan
 partition $\hat{P}$, and
$
\dim_K K[A,B]=n$. Now $\dim_K K[A,B]$ is upper semicontinuous on $A\in \mathcal N_B$, an irreducible variety (Lemma \ref{nilpBirredlem}), and the dimension of the algebra generated by any two commuting
$n\times n$ matrices is less or equal $n$ (\cite{Ge}, see also \cite{Gur,GurSe}).\par
For \eqref{barnsthmii}, since $K$ is closed, wolog we may assume that $B$ is in Jordan form. By \cite[Lemma 2.3]{Bas2} there is an element $g\in \mathcal {C_B}^*$ such that $gCg^{-1}\in S_B$, where $S_B$ is a maximal nilpotent
subalgebra of $\mathcal N_B$. Let $A'$ be a general enough element of $\mathcal S_B$, and take $A=g^{-1}A'g$. Then the pencil $A+tC\subset g^{-1}\mathcal S_Bg\subset \mathcal N_B$; and \eqref{barnsthmi} implies $\dim_K K[A,B]=n$.
\end{proof}\par 
V. Baranovsky uses \eqref{barnsthmii} to show that the subset $\mathfrak W$ of $\mathcal H(n,K)\times V$ consisting of triples $(A,B,v)$ for which $v$ is a cyclic vector for the pair $(A,B)$ is dense (Theorem \ref{Barthm}).
\subsection{Pencil of matrices and Jordan form}\label{pencilsec} 
\par  We first give an example illustrating the connection between Hilbert
function strata $Z_H$  of Artinian algebras and those of commuting nilpotent matrices.
Here are some features.
Assume $K[A,B]\in {\mathcal H}^H(n,K)$. Then
\begin{enumerate}[i.] 
\item The ideals that occur in writing $K[A,B]\cong R/I$ are in general non-graded.
\item The partition $P$ need not have diagonal lengths $H=H(K[A,B])$, and $P(H)\ge P$ in \eqref{poparteq}.
\item The partition $P_\lambda$ arising from the action of $A+\lambda B$
satisfies $P_\lambda = P(H)$ for a generic $\lambda$, all but a finite number (Theorem \ref{genericlem}).
\item The closure of the orbit of $P$ includes a partition of diagonal lengths $P(H)$ (Theorem \ref{hilbfuncthm}).
\end{enumerate}
\begin{example}[Pencil and specialization]\label{specializeex}
Take for
$B$ the Jordan matrix of partition $(3,1,1)$. 
It is easy to see that for $P=(3,1,1)$ we have $Q(P)=(4,1)$, Also by \cite[Lemma 2.3]{Bas2}, up to conjugation by an element of the centralizer $\mathcal C_B$, any element
$A\in
\mathcal N_B$ satisfies
\large
\begin{equation*}
B=\left( 
\begin{array}{ccc|cc}
0&1&0&0&0\\
0&0&1&0&0\\
0&0&0&0&0\\
\hline
0&0&0&0&0\\
0&0&0&0&0\\
\end{array}
\right),
\quad A=\left( 
\begin{array}{ccc|cc}
0&a&b&f&g\\
0&0&a&0&0\\
0&0&0&0&0\\
\hline
0&0&e&0&c\\
0&0&d&0&0\\
\end{array}
\right).
\end{equation*}
\normalsize
We send $x\to A, y \to B$, and let the ideal $I=\Ker (R\to K[A,B])$. We now assume that $acdf\not=0$ so that $A$ be general enough to have Jordan block partition $Q(P)$. Let $\beta=1/(cdf)$,
and let
$$ g_2=y^2-\beta x^3, g_1=y-a\beta x^2, g_0=1.
$$
Then $I$ has a normal basis in the $x$ direction (Definition  \ref{stdbasisdef}): we have 
$$
\mathcal A=\mathcal A_{A,B}=K[A,B]\cong R/I, I  = (g_2,xg_1,x^4g_0).
$$
with $k_0=4, k_1=1$ in \eqref{stdbasiseq}, and the Hilbert function $H(\mathcal A)=(1,2,1,1)$. The multiplication action $A=m_x$ of $x$
on the classes
$\langle 1,x,x^2,x^3;g_1 \rangle$ in $\mathcal A$ has Jordan blocks given by the
partition
$(4,1)$ having
 diagonal lengths $H(\mathcal A)$.\par 
We have in the $y$-direction $I=(x^3-\beta^{-1} y^2, xy-ay^2,y^3)$: the non-homogeneous generator $x^3-\beta^{-1} y^2$ with lead term $x^3$ prevents
$I$ from having a standard basis in the direction $y$. The
action of
$B=m_y$ on the classes of $\langle 1,y,\beta x^3;x-a y, y^2 \rangle$ in
$\mathcal A$ verifies that
$P_B =(3,1,1)$ of diagonal lengths $(1,2,2)$, which is \emph{not}
$H(\mathcal A)$. 
\par
 Now
consider the associated graded algebra $\mathcal A^*=R/I^*$: here
$I^*=(y^2,xy,x^4)$.  The standard generators in the $y$ direction (switch $y,x$ in the Definition \ref{stdbasisdef}) are
$(x^4,x^3y, x^2y,xy,y^2)$.
The action of $m_y$ on the $K$-basis $\langle 
1,y; x,x^2,x^3\rangle$ of $\mathcal A^*$ has Jordan partition $P'=(2,1,1,1)$ of diagonal 
lengths
$H({\mathcal A})=(1,2,1,1)$ (Lemma \ref{gradpartlem}). (In the $x$ direction
$I^*$ has normal generators $(y^2,xy,x^4)$ of partition (4,1), the same partition as for $I$.)
  Also, holding $a$ constant, we have
\begin{equation*}
I^*=\lim _{\beta\to 0}
I,
\end{equation*}
so $P'=(2,1,1,1)$ is in the closure of the orbit of $B$ (Theorem \ref{hilbfuncthm}\eqref{hilbfuncthmii}.)\par
Here $\dim G_H=1$: a graded ideal of Hilbert function $H$ must satisfy 
\begin{equation*}
\exists L\in
R_1\mid I=(xL,yL,M^4),
\end{equation*}
 so
$G_H\cong \mathbb P^1$, and $I\in G_H$ is determined by the choice of the linear form $L$,
here $L=y$. The fibre of $Z_H$ over a point of $G_H$ is determined here by the choice of
$a, \beta$, so has dimension two.
\end{example}
\begin{theorem}\label{genericlem} Assume $A,B$ are commuting $n\times n$ nilpotent matrices
with
$B$ in Jordan form and suppose $\dim_K K[A,B]=n$. Let $H= H(K(A,B))$ be the Hilbert function. Let 
$K $ be an algebraically closed field of characteristic zero, or of characteristic $p>j$ the socle degree of 
$H$. Then for a generic $\lambda\in \mathbb P^1$, the Jordan block sizes of the
action of 
$A+\lambda B$ both on $K[A,B]\cong R/I$ and on the associated 
graded algebra $Gr_M K[A,B]\cong Gr_M(R/I)$, are given by the parts of $P(H)$. We have $P(H)\ge P$ in the POS of \eqref{poparteq}.
\end{theorem}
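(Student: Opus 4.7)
The plan is to locate, for generic $\lambda \in \mathbb{P}^1$, a normal standard basis of $I$ in the direction $\ell_\lambda = x + \lambda y$, and then read the Jordan partition of $m_{\ell_\lambda}$ directly from this basis. Since $\mathcal A = K[A,B] \cong R/I$ with $A = m_x$ and $B = m_y$, we have $A + \lambda B = m_{\ell_\lambda}$ acting on $\mathcal A$. After the linear change of coordinates $(x,y) \mapsto (\ell_\lambda, w)$ for any complement $w$, the standard-basis theory of \cite{Br, I1} gives a nonempty Zariski open subset of directions $\ell \in \mathbb{P}(R_1) \cong \mathbb{P}^1$ for which $I$ admits a \emph{normal} basis in direction $\ell$. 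The pencil $\{\ell_\lambda\}$ traces out all of $\mathbb{P}(R_1)$, so for all but finitely many $\lambda$ we obtain a normal basis
\begin{equation*}
(f_\nu = g_\nu, \; f_{\nu-1} = \ell_\lambda^{k_{\nu-1}} g_{\nu-1}, \; \ldots, \; f_0 = \ell_\lambda^{k_0} g_0)
\end{equation*}
of $I$ in direction $\ell_\lambda$, as in Definition \ref{stdbasisdef}, with $k_0 > k_1 > \cdots > k_{\nu-1}$; by the remark following Definition \ref{stdbasisdef} this forces $(k_0, \ldots, k_{\nu-1}) = P(H)$. The characteristic hypothesis $\mathrm{char}\,K = 0$ or $p > j$ enters here, since bringing a standard basis into normal form proceeds by inductive cancellation of tail terms in degrees up to the socle degree $j$ and needs the relevant integer coefficients to be invertible.

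Next I would exhibit a Jordan basis for $m_{\ell_\lambda}$ on $\mathcal A$ by taking the union of chains
\begin{equation*}
\bigsqcup_{i=0}^{\nu-1} C_i, \qquad C_i = \{g_i, \, \ell_\lambda g_i, \, \ldots, \, \ell_\lambda^{k_i-1} g_i\}.
\end{equation*}
This set has $k_0 + \cdots + k_{\nu-1} = n$ elements. Because each $g_i = w^i + h_i$ with $h_i$ of $w$-degree strictly less than $i$, and because standard-basis reduction modulo $I$ never raises $w$-degree, the transition from the monomial cobasis \eqref{cobasiseq} to $\bigsqcup C_i$ is unitriangular in the ordering by $w$-degree; hence $\bigsqcup C_i$ is a $K$-basis of $\mathcal A$. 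Multiplication by $\ell_\lambda$ shifts each $C_i$ up by one step, and the top element satisfies $\ell_\lambda \cdot \ell_\lambda^{k_i-1} g_i = f_i \in I$. Thus $m_{\ell_\lambda}$ acts in block-diagonal Jordan form with blocks of sizes $(k_0, \ldots, k_{\nu-1}) = P(H)$.

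The identical argument with $I^*$ in place of $I$, taking each $g_i$ homogeneous of degree $i$, gives the statement for the associated graded algebra $\mathrm{Gr}_M(R/I)$. Finally, $P(H) \ge P$ follows by semicontinuity: for each $k$, $\mathrm{rank}((A + \lambda B)^k)$ is lower-semicontinuous in $\lambda \in \mathbb{P}^1$, and by Lemma \ref{partpowlem}\eqref{ranktoP} together with the equivalence $P \ge P' \Leftrightarrow \mathrm{rk}(J_P^k) \ge \mathrm{rk}(J_{P'}^k)$ for all $k$, the Jordan partition of $m_{A+\lambda B}$ can only decrease in the order \eqref{poparteq} upon specialization. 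Specializing $\lambda \to \infty$ recovers $m_y = B$ of partition $P$, so $P \le P(H)$.

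The main obstacle I expect is the first step: carefully verifying that $I$ admits a normal basis in all but finitely many directions of the pencil, and tracking exactly where and how the hypothesis $p > j$ is used in the passage from a standard basis to a normal one. This amounts to an application of the standard-basis machinery of \cite{Br, I1} with $I$ held fixed while the direction varies across $\mathbb{P}(R_1)$.
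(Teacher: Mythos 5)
Your proposal is correct and takes essentially the same route as the paper: one cites Brian\c{c}on/Iarrobino for a \emph{normal} standard basis of $I$ in a generic direction $x+\lambda y$ of the pencil (which is exactly where the hypothesis $\mathrm{char}\,K=0$ or $p>j$ enters), reads the Jordan blocks of $m_{x+\lambda y}$ off the cyclic $K[x+\lambda y]$-submodules generated by $1,g_1,\ldots,g_{\nu-1}$ to get $(k_0,\ldots,k_{\nu-1})=P(H)$, and gets $P(H)\ge P$ by specialization/semicontinuity as in Lemma \ref{comparelem}. The only cosmetic difference is in the graded case, where the paper takes initial forms $In(f_i)$ of the same standard basis to get a standard basis of $I^*$, rather than rerunning the argument for $I^*$ with homogeneous $g_i$ as you do.
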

\begin{proof} By \cite{Br} in the case $\cha K=0$ or \cite{I1} when $\cha K=p>j$, there is an open dense set of $\lambda \in \mathbb A^1$, such that 
the ideal $I$ has normal
basis in the direction $x'=x+\lambda y$.  Replacing $x$ in \eqref{stdbasiseq} by $x'$, so considering the standard basis $f_0,\ldots ,f_{\nu -1}$ there, and considering the action of
$m_{x'}=\times x'$ on the cyclic $K[x']$ subspaces of $R/I$ generated by $1,g_1,\dots g_{\nu
-1}$, we see that the Jordan partition of 
$m_{x'}$ is just $P(m_{x'})=(k_0,\ldots ,k_{\nu -1})$. This is $P(H)$ since the basis is normal.\par
The standard basis for the associated graded ideal is given by the initial ideal $In I$,
satisfying
\begin{equation*}
In I=(In(f_\nu),\ldots ,In(f_1),f_0),
\end{equation*}
where here  $In f$ denotes the lowest degree graded summend of $f$. 
 So the Jordan partition for the action of
$m_x$ on $R/I^*$ is also $P(H)$.
\end{proof}\par
We thank G. McNinch for comments and a discussion that led to the following corollary. The corollary implies the special case of his result [McN, Theorem 26]
where $ K[A,B] $ is assumed cyclic, and also $K$ is algebraically closed of suitable characteristic.
\begin{corollary}\label{GMcor} Assume that $A,B$ and the field $K$ satisfy the hypotheses of Theorem \ref{genericlem}. Then for generic $t$, $A$ and $B$ are in the Jacobson radical of $\mathcal C_t$, the commutator algebra of $A+tB$.
\end{corollary}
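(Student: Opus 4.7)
My plan is to combine Theorem \ref{genericlem} with the Wedderburn decomposition of the centralizer $\mathcal C_t$ already used in the proof of Lemma \ref{nilpBirredlem}. Set $N_t := A+tB$. By Theorem \ref{genericlem}, for generic $t\in K$ the matrix $N_t$ has Jordan partition $P(H)$, whose parts are strictly decreasing. In particular, each distinct part of the Jordan partition of $N_t$ occurs with multiplicity one.

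I would then apply Wedderburn's theorem to the finite-dimensional $K$-algebra $\mathcal C_t$ to obtain an internal direct sum $\mathcal C_t = \mathfrak L\oplus \mathfrak J$, where $\mathfrak J$ is the Jacobson radical and $\mathfrak L\cong \prod_u \mathrm{Mat}_{r_u}(K)$ is a semisimple complement, with $r_u$ the multiplicity of the $u$-th distinct part of the Jordan partition of $N_t$ (this is the identification recorded just after Lemma \ref{nilpBirredlem}). By the preceding step every $r_u = 1$, so $\mathfrak L\cong K^\nu$ is a commutative semisimple algebra containing no nonzero nilpotents.

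To conclude, observe that $A$ and $B$ commute with each other and are nilpotent, so they are nilpotent elements of $\mathcal C_t$. Applying the quotient projection $\pi\colon \mathcal C_t\twoheadrightarrow \mathcal C_t/\mathfrak J\cong \mathfrak L$, the images $\pi(A)$ and $\pi(B)$ are nilpotent in $K^\nu$ and hence vanish, so $A,B \in \ker\pi = \mathfrak J$, as required.

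I do not foresee a genuine obstacle here: Theorem \ref{genericlem} supplies the essential input by pinning down the generic Jordan type of the pencil, and the conclusion that the nilpotents $A,B$ sit in the Jacobson radical of $\mathcal C_t$ is then forced by the absence of nonzero nilpotents in $\mathfrak L\cong K^\nu$. The only point requiring a little care is the identification of the Wedderburn multiplicities $r_u$ with the multiplicities of the parts of $P(H)$, but this is precisely the remark following Lemma \ref{nilpBirredlem}.
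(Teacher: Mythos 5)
Your proposal is correct and takes essentially the same route as the paper's proof: from Theorem \ref{genericlem} the generic Jordan partition $P(H)$ of $A+tB$ has distinct parts, so the semisimple (Wedderburn) quotient $\mathcal C_t/\mathfrak J_t$ is a product of copies of $K$ --- the paper cites \cite[Lemma 2.3]{Bas2} for exactly the multiplicity identification you spell out --- and the nilpotent elements $A,B$ must then map to zero in that quotient, hence lie in $\mathfrak J_t$.
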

\begin{proof} The Jordan partition $P_t$ given by the blocks of $A+tB$ for $t$ generic is strictly
decreasing, as it has the form $P(H)$. That the partition $P_t$ has distinct parts is equivalent to the semisimple quotient $\mathcal C_t/\mathfrak{J}_t$ of the commutator algebra $\mathcal C_t\subset \mathrm{End}V$ of $A+tB$
satisfying $\mathcal C_t/{\mathfrak J}_T$ being an \'{e}tale algebra -- a product
of fields $K$, one copy for each distinct part of $P_t$ \cite[Lemma 2.3]{Bas2}. Thus $A$ and $B$, being nilpotent, are in the Jacobson radical
of $\mathcal C_t$. 
\end{proof}\par
 The following example communicated to us by G. McNinch shows that the restriction on $\cha K$ in Theorem~\ref{genericlem} is sharp.
\begin{example}\label{McNex}
Let $d$ be a positive integer. Let $V_1$ be a $d$-dimensional $K$-vector space, let $V_2$ be a $ 2$-dimensional $K$-vector space, and let $V$ be the tensor product $V= V_1 \otimes V_2$.
Let $A = J_d \otimes I_2$, where $J_d$ is a Jordan block of size $d$ in the $V_1$ factor and $I_2$ the identity. So the partition of $A$ is $(d,d)$. And let $B = I_d \otimes J_2$.
Then $A$ and $B$ commute. The algebra $K[A,B]$ is isomorphic to $K[x,y]/(x^d,y^2)$, and has vector space dimension $n=2d$. Its Hilbert function is
$H = (1,2,2,...,2,2,1)$ of socle degree $d$,  so $ P(H) = (d+1,d-1)$. (This is the answer one expects from the rule for computing tensor products
of representations of the Lie algebra $sl(2))$). If the integer $d$ is invertible in K, the partition of $A + tB $ is indeed $(d+1,d-1)$
for all $t$ not zero. But in characteristic $p$ dividing $d$, the
Jordan block partition of $A + tB$ is $(d,d)$ for all $t$ \cite[Example 22]{McN}. 
\end{example}
We isolate a result that can be concluded simply from \cite[Definition 3.9,Theorem 3.12]{IY} or from Gr{\"o}bner basis theory. We use the notation
from Definition \ref{stdbasisdef}.
\begin{lemma}\label{gradpartlem} Let $\mathcal A$ be a graded Artinian algebra quotient $\mathcal A=R/I$ of $R$, so $\mathcal A=\oplus_0^j \mathcal A_i$. Then we have
\begin{enumerate}[i.]
\item \label{gradpartlemi} Let $x\in \mathcal{ A}_1$. Then $I$ has a
standard basis in the direction $x$.
\item\label{gradpartlemii}  The partition $P'$ given
by the Jordan blocks of the action of $x$ on $\mathcal A$ satisfies $P'=(k_0,\ldots k_{\nu -1})$ from \eqref{stdbasiseq}, and has diagonal lengths $H=H(\mathcal A)$.
\end{enumerate}
\end{lemma}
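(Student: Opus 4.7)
The plan is to prove both parts by exploiting the graded $K[x]$-module structure of $\mathcal{A}$. First I observe that $\mathcal{A}/x\mathcal{A}$, being a graded quotient of $R/(x) \cong K[y]$, must equal $K[y]/(y^\nu)$ for a unique $\nu \ge 1$; this fixes the integer $\nu$ appearing in \eqref{stdbasiseq}. For $0 \le b < \nu$ I take the natural homogeneous lift $g_b := y^b$ of the degree-$b$ generator of $\mathcal{A}/x\mathcal{A}$, and set $k_b$ to be the minimal non-negative integer with $x^{k_b}y^b \in I$ (which exists as $\mathcal{A}$ is Artinian). The required monotonicity $k_0 \ge k_1 \ge \cdots \ge k_{\nu-1}$ is then immediate from the ideal property: $x^{k_b}y^b \in I$ forces $x^{k_b}y^{b+1} = y \cdot x^{k_b}y^b \in I$, whence $k_{b+1} \le k_b$. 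For the top generator, since $y^\nu \in I + xR$, the degree-$\nu$ homogeneous part of this relation reads $y^\nu = p + xu$ with $p \in I_\nu$ and $u \in R_{\nu-1}$; set $g_\nu := y^\nu - xu = p \in I$, so that $h_\nu := -xu$ is homogeneous of degree $\nu$ with $y$-degree at most $\nu-1$, hence in $M^\nu \cap K[x]\langle y^{\nu-1},\dots,1\rangle$.

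The remaining task for (i) is to verify that the $f_b := x^{k_b} g_b$ (with $f_\nu := g_\nu$) actually generate $I$, or equivalently by Lemma~\ref{standbasiseqlem} that $R = \langle E^c\rangle \oplus I$, where $E^c = \{x^a y^b : b < \nu,\ a < k_b\}$. Spanning is a direct monomial-by-monomial reduction: if $b < \nu$ and $a \ge k_b$, then $x^a y^b = x^{a-k_b} f_b \in I$; if $b \ge \nu$, the congruence $y^\nu \equiv -h_\nu$ modulo $I$ strictly decreases the $y$-degree and iterates. For direct-sum-ness I invoke the graded structure theorem for finitely generated modules over the PID $K[x]$: since $\mathcal{A}/x\mathcal{A} \cong K[y]/(y^\nu)$ has one-dimensional graded pieces in degrees $0, 1, \dots, \nu-1$, the theorem forces $\mathcal{A} \cong \bigoplus_{b=0}^{\nu-1} K[x]/(x^{k_b})(-b)$ with each summand generated by the image of $y^b$. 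This yields $\dim_K \mathcal{A} = \sum_b k_b = |E^c|$, which combined with spanning gives $\langle E^c\rangle \cap I = 0$.

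For (ii), I read the Jordan form of multiplication by $x$ off the basis $E^c$: $x \cdot (x^a y^b) = x^{a+1}y^b$, which stays in $E^c$ when $a+1 < k_b$ and vanishes in $\mathcal{A}$ when $a+1 = k_b$ (as $x^{k_b}y^b = f_b \in I$). Thus each row $\{x^a y^b : 0 \le a < k_b\}$ of the Ferrer graph of $P := (k_0,\dots,k_{\nu-1})$ is a single cyclic $K[x]$-summand of dimension $k_b$, giving one Jordan block of size $k_b$, so the partition of the $x$-action on $\mathcal{A}$ is exactly $P$. The diagonal-lengths claim reduces to the counting identity $h_d = \dim_K \mathcal{A}_d = \#\{(a,b) \in E^c : a+b = d\}$, whose right-hand side is by definition the length of the $d$-th diagonal of the Ferrer graph of $P$.

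The chief obstacle is the direct-sum step in (i); the rest is essentially bookkeeping once the lifts $g_b$ and exponents $k_b$ are in place. An alternative route bypassing the graded structure theorem would argue directly via Gr\"obner basis theory in the specified deg-lex order: since $I$ is homogeneous, one can show in each graded degree that the initial ideal of $I$ matches the monomial ideal $E_P$ of \eqref{mpeq} by matching dimensions, thereby identifying $E^c$ as the cobasis --- but this ultimately runs on the same dimension counts.
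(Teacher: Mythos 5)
The central step of your argument is wrong, and the error propagates. You define $k_b$ as the minimal $k$ with $x^k y^b \in I$ and then claim that the graded structure theorem over $K[x]$ forces $\mathcal{A} \cong \bigoplus_b K[x]/(x^{k_b})(-b)$ with each summand generated by the image of $y^b$. The structure theorem does give a decomposition with one cyclic summand in each degree $0,\dots,\nu-1$, but the generator of the degree-$b$ summand need only be \emph{some} homogeneous lift of a basis vector of $(\mathcal{A}/x\mathcal{A})_b$ --- there is no reason it can be taken to be $y^b$ itself. The correction term $h_b$ in \eqref{stdbasiseq} is genuinely needed, and your $k_b$ (the annihilator exponent of $y^b$ in $\mathcal{A}$) is not the $k_b$ of the standard basis, which is governed by the initial ideal $E(I)$ rather than by $I$.

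A concrete counterexample: take $I = (y^2,\ x^2 - xy)$, so $H(\mathcal{A}) = (1,2,1)$, $\dim_K \mathcal{A} = 4$, $\nu = 2$. Your $k_0 = 3$ (since $x^3 \in I$, $x^2 \notin I$) and your $k_1 = 2$ (since $xy \equiv x^2 \not\equiv 0$, while $x^2 y = xy^2 = 0$), giving $k_0 + k_1 = 5 \neq 4$, so the claimed isomorphism is impossible and $\langle E^c \rangle \cap I \neq 0$. Indeed the submodules $K[x]\cdot 1$ and $K[x]\cdot y$ overlap in $xy = x^2$. Moreover the $f_b := x^{k_b} y^b = (x^3, x^2 y, y^2)$ form a monomial ideal that does not contain $x^2 - xy$, so they fail to generate $I$. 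The correct standard basis here is $(y^2,\ x(y - x),\ x^3)$ with $(k_0,k_1) = (3,1)$: the generator in degree $1$ is $g_1 = y - x$, not $y$, and the Jordan partition of $x$ is $(3,1)$, matching $P(H)$ for $H=(1,2,1)$. Once part (i) is wrong the derivation of (ii) collapses with it. Your closing remark that an alternative via the initial ideal ``runs on the same dimension counts'' is also off: the initial-ideal route (which is what the paper uses) defines the $k_b$ by $E(I)$ and replaces your structure-theorem step by an elementary elimination argument showing the $k_b$ are non-increasing; that route does not have the gap you introduced by insisting $g_b = y^b$.
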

\begin{proof} The initial monomial ideal $E(I)$ in the $x$-direction certainly has a basis as in \eqref{stdbasiseq}, for some sequence of integers $k_0, \ldots ,k_{\nu -1}$: to show a standard
basis we must show that the sequence is non-increasing. However, if $k_u>k_{u-1}$ then multiples of $yf_{u-1}$ could be used to eliminate $y^ux^{k_u}$ from the initial ideal $E(I)$.\par
Then \eqref{gradpartlemii} follows from \eqref{stdbasiseq}, since $\mathcal A$ is the internal direct sum of the $k[X]$ modules generated by  $1,g_1,\ldots ,g_{\nu -1}\in\mathcal A$.
  
\end{proof}\par

Recall that $\mathcal U_B$ is the open dense subset of $\mathcal N_B$ for which
$\dim_K K[A,B]=n$. Now using the connection between
$Z_H$ and $\mathcal H^H(n,K)$ we have
\begin{theorem}\label{hilbfuncthm} Let $B$ be nilpotent with Jordan partition $P$, let $A\in \mathcal U_B$, and let $H=H(K[A,B])$. Suppose that $K$ is as in Theorem~\ref{genericlem}. Then 
\begin{enumerate}[i.]
\item\label{hilbfuncthmi} For generic $\lambda\in \mathbb
P^1$ the Jordan block sizes of the action of $A+\lambda B$ on $K[A,B]$ are given by the parts of $P(H)$.
\item\label{hilbfuncthmii} The closure of the $\mathrm {Gl}_n$ orbit of $B$ contains a
nilpotent matrix having partition $P'$ whose diagonal lengths are given by $H$.
\end{enumerate}
\end{theorem}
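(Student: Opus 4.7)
Part (i) is immediate from Theorem \ref{genericlem}: the assumption $A \in \mathcal{U}_B$ says $\dim_K K[A,B] = n$, and the hypothesis on $K$ matches that of the theorem, so for generic $\lambda \in \mathbb{P}^1$ the endomorphism $\times (A+\lambda B)$ on $K[A,B]$ has Jordan block sizes given by the parts of $P(H)$.

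For part (ii), the plan is to produce a matrix of the required type as multiplication by $y$ on the associated graded algebra $\mathcal{A}^* = Gr_M K[A,B]$, and then to exhibit it in the orbit closure of $B$ via a standard Gr\"obner degeneration. Writing $I = I_{A,B}$ and $I^* = \mathrm{in}(I) \subset R$ for the initial (associated graded) ideal, so that $R/I^* \cong \mathcal{A}^*$, the quotient $\mathcal{A}^*$ is a graded Artinian algebra with Hilbert function $H$. Lemma \ref{gradpartlem}(\ref{gradpartlemii}) applied to $y \in \mathcal{A}^*_1$ then yields directly that the operator $B'$ of multiplication by $y$ on $\mathcal{A}^*$ has Jordan partition $P'$ with diagonal lengths $H$.

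Next I would place $B$ and $B'$ into a single continuous family by rescaling. Using the automorphism $\rho_t: R \to R$, $x \mapsto tx$, $y \mapsto ty$, and a standard basis $(f_0, \ldots, f_\nu)$ of $I$ (Definition \ref{stdbasisdef}, existence by \cite{Br,I1}), the rescaled generators $g_i(t) = t^{-\mathrm{ord}(f_i)} \rho_t(f_i)$ define a family of ideals $\{J_t\}_{t \in \mathbb{A}^1}$ with $J_1 = I$, $J_0 = I^*$, and $J_t = \rho_t(I)$ for $t \neq 0$. One checks that the $g_i(t)$ preserve the internal direct sum complementarity of Lemma \ref{standbasiseqlem} relative to the monomial cobasis $E^c$ of $E = E(I^*)$ for every $t$, so $H(R/J_t) = H$ uniformly and $E^c$ serves as a fixed $K$-basis of every $R/J_t$. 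Writing multiplication by $y$ in this basis yields a morphism $t \mapsto B_t \in M_n(K)$ with $B_0 = B'$; for $t \neq 0$, the isomorphism $\bar\rho_t: R/I \to R/J_t$ intertwines $\times y$ on $R/J_t$ with $\times(t^{-1}y)$ on $R/I$, so $B_t$ is conjugate to $t^{-1}B$ and thus has Jordan partition $P$. Passing to the limit gives $B' = \lim_{t \to 0} B_t \in \overline{\mathrm{Gl}_n \cdot B}$, providing the required matrix of partition $P'$ with diagonal lengths $H$ in the closure of the orbit of $B$. Example \ref{specializeex} executes this degeneration concretely with $\beta \to 0$ playing the role of $t \to 0$; the main technical point is the uniform constancy of the Hilbert function along $J_t$, which is essentially routine given the standard-basis apparatus already invoked in the proof of Theorem \ref{genericlem}.
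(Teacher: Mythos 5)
Your proposal is correct and follows essentially the same route as the paper: part (i) is quoted directly from Theorem \ref{genericlem}, and part (ii) combines Lemma \ref{gradpartlem}\eqref{gradpartlemii} (the partition of $m_y$ on $\mathcal A^*$ has diagonal lengths $H$) with the fact that $m_y$ on $\mathcal A$ degenerates to $m_y$ on $\mathcal A^*$, which the paper simply invokes as ``$\mathcal A$ is a deformation of $\mathcal A^*$'' while you spell it out via the rescaling $\rho_t$ and the standard-basis/Gr\"obner family $J_t$. The extra detail is a faithful elaboration of the paper's deformation step (and, like the paper, tacitly identifies the action of $B$ on $V$ with $m_y$ on $K[A,B]$), so no substantive difference in approach.
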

\begin{proof} We may assume that $B$ is in Jordan form. It follows from the assumptions and Theorem \ref{genericlem} that
$C_\lambda=A+\lambda B$ for
$\lambda$ generic satisfies, $P(C_\lambda)=P(H)$. Since the algebra $\mathcal A=\mathcal
A_{A,B}=K[A,B]$ is a deformation of the associated graded algebra $\mathcal A^*$, the 
multiplication $m_y$ on $\mathcal A$ is a deformation of the action $m_y$ on $\mathcal A^*$,
so the orbit $P'$ of the latter is in the closure of the orbit of $P$. By Lemma~\ref{gradpartlem}~\eqref{gradpartlemii} $P'$ has
diagonal lengths $H$.
\end{proof}
\begin{theorem}\label{decreasebthm}  Let $B$ be nilpotent of partition $P$, and denote by $Q(P)$
the partition giving the Jordan block decomposition for a generic element $A\in
\mathcal N_B$. Suppose that $K$ is algebraically closed and that $\cha K=0$ or $\cha K >n$. Then
$Q(P)$ has decreasing parts and is the greatest $P(H)$ that occurs for Hilbert functions
of length $n$ algebras $\mathcal A=K[A,B]$, with $A\in \mathcal N_B$:
\begin{equation*}
Q(P)=\sup \{ P(H) \mid \exists A\in \mathcal U_B, H=H(K[A,B])\}.
\end{equation*}
\end{theorem}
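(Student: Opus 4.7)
The plan is to take $A$ generic in $\mathcal{U}_B$, exploit the fact that the pencil $\{A + \lambda B : \lambda \in K\}$ is entirely contained in $\mathcal{N}_B$ while generating the same algebra $K[A,B]$, and combine this with Theorem \ref{hilbfuncthm} together with the openness of the stratum $\{P_C = Q(P)\} \subset \mathcal{N}_B$.

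First I would select $A$ generic in $\mathcal{U}_B$: since $\mathcal{U}_B$ is open dense in the irreducible variety $\mathcal{N}_B$ (Lemma \ref{nilpBirredlem} and Lemma \ref{barnsthm}\eqref{barnsthmi}), such an $A$ is also generic in $\mathcal{N}_B$, hence has Jordan partition $Q(P)$. Set $H = H(K[A,B])$. Observe that each $C_\lambda = A + \lambda B$ lies in $\mathcal{N}_B$: it commutes with $B$ and is nilpotent, being a $K$-linear combination of commuting nilpotent elements of the finite-dimensional algebra $K[A,B]$. Moreover $K[C_\lambda, B] = K[A,B]$ for every $\lambda$, so the Hilbert function $H$ is independent of $\lambda$.

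Next I would combine two cofinite conditions on $\lambda \in \mathbb{A}^1$. The first: by Lemma \ref{QPlem} the stratum $\{C \in \mathcal{N}_B : P_C = Q(P)\}$ is the locus where the ranks of the powers of $C$ attain their minima, hence is open in $\mathcal{N}_B$; its preimage in the $\lambda$-line contains $\lambda = 0$ and is therefore a nonempty, hence cofinite, open subset of $\mathbb{A}^1$. The second: by Theorem~\ref{hilbfuncthm}\eqref{hilbfuncthmi}, the partition $P_{C_\lambda}$ equals $P(H)$ for $\lambda$ in a cofinite open subset of $\mathbb{A}^1$. These two cofinite subsets meet in a cofinite set, so there is a $\lambda$ for which $P_{C_\lambda}$ is simultaneously $Q(P)$ and $P(H)$. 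Thus $Q(P) = P(H)$. Since $P(H)$ has strictly decreasing parts by Definition~\ref{diagdef}, so does $Q(P)$.

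Finally, for the supremum claim: for any $A' \in \mathcal{U}_B$ with $H' = H(K[A',B])$, Theorem \ref{hilbfuncthm}\eqref{hilbfuncthmi} produces elements of $\mathcal{N}_B$ with Jordan partition $P(H')$, and Lemma \ref{QPlem} then forces $P(H') \le Q(P)$. Combined with the equality $Q(P) = P(H)$ established above for generic $A$, this gives $Q(P) = \sup\{P(H) : A \in \mathcal{U}_B, \; H = H(K[A,B])\}$. The only delicate point is ensuring that the two open conditions on $\lambda$ are both nonempty, which is handled by the genericity of $A$ in $\mathcal{U}_B$ and Theorem \ref{hilbfuncthm}; once that is secured, the intersection of two cofinite sets in $\mathbb{A}^1$ closes the argument.
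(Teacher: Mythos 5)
Your proposal is correct and follows essentially the same route as the paper's own proof: the paper likewise obtains $Q(P)=P(H)$ for a generic $A$ by applying Theorem \ref{hilbfuncthm} to the pencil $A+\lambda B$ inside $\mathcal N_B$, and gets the maximality over all $P(H')$ from the irreducibility of $\mathcal N_B$ (your appeal to Lemma \ref{QPlem} is the same mechanism, and your explicit intersection of the two cofinite conditions on $\lambda$ is the specialization argument the paper records separately in Lemma \ref{comparelem}\eqref{comparelemii}). One small slip to fix: the open stratum $\{C\in\mathcal N_B \mid P_C=Q(P)\}$ is the locus where the ranks of the powers of $C$ attain their \emph{maxima}, not their minima; openness then follows because each condition $\rank(C^i)\ge r_i$ is open.
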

\begin{proof} By the irreducibility of
$\mathcal N_B$ (Lemma \ref{nilpBirredlem}), there is an orbit $Q(P)$ whose closure contains each other orbit occuring in $\mathcal U_B$. By Theorem \ref{hilbfuncthm}
$Q(P)$ has the form $P(H)$ for some $H$. Since the closure of its orbit contains each other orbit, this $P(H)=Q(P)$ is greater than every other $P(H')$
 for a sequence $H'$ among $\{ H(K(A,B)), A\in \mathcal U_B$\}. 
\end{proof}\par\bigskip
 We recall the natural order \eqref{poshilbeq} on the set $\mathcal {H}(n)$ of
Hilbert functions of length $m$.
 \par\noindent The openness on $\Hilb^n(R)$
of the condition 
\begin{equation*}
\dim_K I\cap M^{u+1}>s
\end{equation*}
  shows that 
\begin{equation}\label{hilbspecialeq}
\overline{Z_H}\cap Z_{H'}\not= \emptyset\, \Rightarrow H\le H'.
\end{equation}
We denote by $\mathfrak W_B$ the fibre over projection on the first factor of $\mathfrak W$ from \eqref{Ueqn} : thus $\mathfrak W_B$ is isomorphic to pairs $(A,v)$ with $v$ a cyclic vector
for $(B,A)$; and it is acted on by the units $\mathfrak G={\mathcal C_B}^*$ of the commutator $\mathcal C_B$ of $B$ by: $g\in \mathfrak G \Rightarrow
g(A,v)=(gAg^{-1}, g(v))$. Recall that $B=J_P$, and that $i(Q(P))$ is its largest part.
\begin{lemma}\label{comparelem} We have the following:
\begin{enumerate}[i.]
\item\label{comparelemi} Let $(A,v),(A',v')\in \mathfrak W_B$ satisfy, the closure of the $\mathfrak G$ orbit of $(A,v)$, contains that of $(A',v)$.  Then $H(K[A,B])\le H(K[A',B])$.
\item\label{comparelemii} Let $(A,v)\in \mathfrak W_B$ satisfy $P_A=Q(P)$, and let $K$ satisfy, $\cha K=0$ or $\cha K> n$. Then $H(K[A,B])=H_{Q(P)}$, the diagonal lengths of $Q(P)$.
\end{enumerate}
\end{lemma}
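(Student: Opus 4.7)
For part (i), my plan is to work through the morphism $\pi\colon\mathfrak W\to H^{[n]}$ of Lemma \ref{freeactlem}, whose fibers are $Gl(V)$-orbits. Restricting to $\mathfrak W_B$, the group $\mathfrak G=\mathcal C_B^{\ast}$ acts and $\pi$ collapses each $\mathfrak G$-orbit to the single point of $H^{[n]}$ corresponding to $K[A,B]$. Using the continuity of $\pi$, if $(A',v')$ lies in the closure of the $\mathfrak G$-orbit of $(A,v)$ then $K[A',B]=\pi(A',v')$ lies in $\overline{\{K[A,B]\}}\subseteq\overline{Z_H}$ where $H=H(K[A,B])$. Letting $H'=H(K[A',B])$, so that $K[A',B]\in Z_{H'}$, the resulting intersection $\overline{Z_H}\cap Z_{H'}\ne\emptyset$, combined with the semicontinuity statement \eqref{hilbspecialeq} of Hilbert function strata, yields $H\le H'$, as required.

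For part (ii), I would argue directly via the pencil $A+\lambda B$ rather than routing through (i). Since $v$ is a cyclic vector, the map $f\mapsto f(A,B)v$ identifies $V\cong K[A,B]$ as $R$-modules, so the Jordan type of $A+\lambda B$ acting on $V$ coincides with that of multiplication by $x+\lambda y$ on $K[A,B]$. At $\lambda=0$ this partition is $P_A=Q(P)$, while by Theorem \ref{hilbfuncthm}\eqref{hilbfuncthmi} (applicable since the socle degree $j$ of $H=H(K[A,B])$ satisfies $j\le n$, so the hypothesis $\cha K=0$ or $\cha K>n$ suffices) for generic $\lambda\in\mathbb P^1$ the partition is $P(H)$. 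Since $\lambda\mapsto\rk((A+\lambda B)^i)$ is lower semicontinuous on $\mathbb A^1$, specialization at $\lambda=0$ yields $P_A\le P(H)$ in the partial order \eqref{poparteq}, i.e.\ $Q(P)\le P(H)$. Combining with $P(H)\le Q(P)$ from Theorem \ref{decreasebthm} forces $P(H)=Q(P)$, and since $Q(P)$ has strictly decreasing parts this identifies $H$ uniquely as the sequence $H_{Q(P)}$ of diagonal lengths of $Q(P)$.

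The main delicate point is the continuity assertion for $\pi|_{\mathfrak W_B}$ in (i): one must confirm that a $\mathfrak G$-orbit closure maps into the closure in $H^{[n]}$ of the single point representing $K[A,B]$, so that \eqref{hilbspecialeq} applies. Once this is in hand, (i) follows immediately. In (ii), the semicontinuity argument along the pencil supplies only the inequality $Q(P)\le P(H)$; the reverse inequality is the content of Theorem \ref{decreasebthm}, so both ingredients are genuinely needed. The characteristic hypothesis in (ii) is comfortably strong enough to invoke Theorem \ref{hilbfuncthm} since $j\le n$.
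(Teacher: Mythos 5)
Your proof is correct and follows the paper's own argument: part (i) is precisely the paper's appeal to $\pi\colon\mathfrak W_B\to H^{[n]}$ being a morphism combined with \eqref{hilbspecialeq}, and part (ii) is the same pencil argument, using Theorem \ref{hilbfuncthm} for generic $\lambda$ and semicontinuity of $\rk((A+\lambda B)^i)$ at $\lambda=0$ to get $Q(P)\le P(H)$, then identifying $H=H_{Q(P)}$ via the decreasing parts of $Q(P)$ (the paper cites Lemma \ref{PHlem}\eqref{PHlemA} here). The only, harmless, difference is that for the reverse inequality $P(H)\le Q(P)$ you cite Theorem \ref{decreasebthm}, whereas the paper argues directly that $Q(P)$, being the partition of the generic element of the irreducible $\mathcal N_B$, dominates $P_{A+\lambda B}=P(H)$; since Theorem \ref{decreasebthm} precedes this lemma and does not depend on it, your citation is not circular.
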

\begin{proof} The claim \eqref{comparelemi} follows from $\pi: \mathfrak W_B\to H^{[n]}$ being a morphism, and \eqref{hilbspecialeq}. Concerning \eqref{comparelemii}, let $A$ have partition $Q(P)$,
and let $H=H(K[A,B])$. By Theorem \ref{hilbfuncthm}, for generic $\lambda\in \mathbb
P^1$ the Jordan block sizes of the action of $A+\lambda B$ on $K[A,B]$ are given by the parts of $P(H)$. Since $m_{A+\lambda B}$ specializes to $m_A$, we have $Q(P)\le P(H)$. But $Q(P)$ is the
partition of the generic element $A\in \mathcal N_B$, which is irreducible, so $Q(P)\ge P(H)$, implying equality. By Lemma \ref{PHlem} \eqref{PHlemA} $H=H_{Q(P)}$.
\end{proof}
\par
Note that an analogous result to Lemma \ref{comparelem}\eqref{comparelemii} would hold for any irreducible subset $\mathcal N$ of $\mathcal N_B$, satisfying $A \in \mathcal N \Rightarrow $ the pencil $A+tB\subset  N, t\in K$.
\begin{theorem}\label{qphilb}
Let $B$ be Jordan of partition $P$ and let $\cha K=0$ or $\cha K> n$. Then
\begin{equation*}
Q(P)=P(H_{min}(P)),\text { where } H_{min}(P)=\min \{ H\mid \exists A\in \mathcal U_B \mid H(K[A,B])=H\}.
\end{equation*}
\end{theorem}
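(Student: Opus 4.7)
The plan is to derive Theorem \ref{qphilb} as an immediate corollary of Theorem \ref{decreasebthm} combined with the order-reversing bijection of Lemma \ref{PHlem}\eqref{PHlemA}. First I would invoke Theorem \ref{decreasebthm}: under the hypothesis on $\cha K$, we have
\begin{equation*}
Q(P) \;=\; \sup\{P(H) \mid \exists\, A \in \mathcal U_B, \, H = H(K[A,B])\},
\end{equation*}
and $Q(P)$ has strictly decreasing parts. Moreover this supremum is attained: since $\mathcal N_B$ is irreducible (Lemma \ref{nilpBirredlem}) and $\mathcal U_B \subset \mathcal N_B$ is open dense (by Lemma \ref{barnsthm}\eqref{barnsthmi}), a generic $A\in \mathcal U_B$ has $P_A = Q(P)$.

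Next, I would apply Lemma \ref{PHlem}\eqref{PHlemA}: the assignment $H \mapsto P(H)$ is an order-reversing bijection between the POS $\mathcal H(n)$ of length-$n$ Hilbert functions of codimension $\le 2$ (ordered by \eqref{poshilbeq}) and the POS of partitions of $n$ with strictly decreasing parts (ordered by \eqref{poparteq}). Consequently, suprema of $\{P(H)\}$ correspond to infima of $\{H\}$, so the attained supremum $Q(P)$ on the partition side corresponds to an attained infimum on the Hilbert-function side. Hence the set $\{H(K[A,B]) : A \in \mathcal U_B\}$ has a minimum element $H_{\min}(P)$, and $Q(P) = P(H_{\min}(P))$.

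As a concrete realization of the minimum, I would note that by Lemma \ref{comparelem}\eqref{comparelemii}, any $A \in \mathcal U_B$ (guaranteed by Lemma \ref{barnsthm}) with $P_A = Q(P)$ satisfies $H(K[A,B]) = H_{Q(P)}$, the diagonal lengths of $Q(P)$. Then the bijection $H_{Q(P)} \leftrightarrow P(H_{Q(P)}) = Q(P)$ identifies this specific $H_{Q(P)}$ as $H_{\min}(P)$, closing the argument.

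I do not foresee a substantive obstacle: everything needed has already been proved. The only point requiring a moment's care is distinguishing the infimum from an attained minimum in the partial order $\mathcal H(n)$, and this is handled precisely by Lemma \ref{comparelem}\eqref{comparelemii}, which exhibits an explicit $A$ achieving $H_{Q(P)}$.
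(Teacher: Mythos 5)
Your proposal is correct and follows essentially the same route as the paper: the published proof likewise deduces Theorem \ref{qphilb} directly from Theorem \ref{decreasebthm} together with the order-reversing bijection of Lemma \ref{PHlem}\eqref{PHlemA}, and notes that Lemma \ref{comparelem} could be used in its place. Your extra step invoking Lemma \ref{comparelem}\eqref{comparelemii} to see that the infimum is actually attained (so that ``$\min$'' is justified) is a sound refinement of the same argument, not a different approach.
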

\begin{proof}  By Lemma \ref{PHlem} \eqref{PHlemA} the bijection $H\to P_H$ from Hilbert functions to
partitions with decreasing parts, is order-reversing. The assertion thus follows from Theorem \ref{decreasebthm}. 
\end{proof}\par\noindent
Note that Lemma \ref{comparelem}\eqref{comparelemi} may be used in place of Lemma \ref{PHlem} in the above proof.
\begin{remark} P. Oblak has shown a formula for the index $i(Q(P))$, which is the largest part of $Q(P)$. This was proven in \cite{Oblak} for $\cha K=0$,
but can be shown valid in all characteristics \cite{BI1}. For a Hilbert function $H$, the index $i(P(H))$ is by definition one greater 
than the socle degree $j$ of $H$. This suggests that Theorems \ref{decreasebthm} and \ref{qphilb} might hold for $\cha K\ge i(Q(P))$.
\end{remark}
  T. Ko{\v{s}}ir and P. Oblak have recently resolved the question we asked in \cite[p.3]{I4} whether $Q(P)$ is stable  (Theorem \ref{Gorensteinthm}). We give a short summary in order to comment on the relation of their
result to the Hilbert scheme. An insight they had was that the question about stability is closely related to the case $e=2$ of the following classical result about height two ideals.
\begin{lemma}\label{genboundlem} Let $K$ be an infinite field and $\mathcal A=R/I, R=K\{ x,y\}$ be an Artinian quotient.  \begin{enumerate}[i.]
\item\label{Gorci} Then $A$ satisfies $\dim_K (0:m)=e-1$ if and only if the ideal $I$ has $e$ generators in a minimal generating set.
\item\label{genii} Let $I$ have $e$ generators in a minimal generating set. Then the Hilbert function $H(A)$ satisfies,
\begin{equation*} i\ge \nu (I) \Rightarrow h_{i-1}-h_i\le e-1.
\end{equation*}
 In
particular, if $I$ is a complete intersection (e=2) then $h_{i-1}-h_i\le 1$.
\end{enumerate}
\end{lemma}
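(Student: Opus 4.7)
The plan is to treat both parts using the Hilbert--Burch structure theorem for codimension-two perfect ideals. Since $\mathcal A=R/I$ is Artinian and $R$ is a regular local ring of dimension two, $I$ is perfect of grade two, so its minimal free resolution has length exactly two:
$$0\to R^{e-1}\to R^e\to R\to R/I\to 0,\qquad e=\mu(I).$$

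For part \eqref{Gorci}, I would identify the socle $(0:m)_{\mathcal A}$ with the top Tor. Computing $\mathrm{Tor}^R_\bullet(\mathcal A,K)$ via the Koszul resolution of $K$ on a regular system of parameters of $R$, the top homology is precisely $\mathrm{Tor}^R_2(\mathcal A,K)=(0:m)_{\mathcal A}$. Computing the same Tor from the minimal free resolution displayed above yields $\dim_K\mathrm{Tor}^R_2(\mathcal A,K)=e-1$, since minimality places all matrix entries in $m$. Hence $\dim_K(0:m)=e-1$, giving both implications at once.

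For part \eqref{genii}, since $H(\mathcal A)=H(\mathcal A^*)$ with $\mathcal A^*=R^*/I^*$ the associated graded, the inequality $h_{i-1}-h_i\le e-1$ is equivalent to $\dim_K I^*_i-\dim_K I^*_{i-1}\le e$. The plan is to work with the minimal graded free resolution of $\mathcal A^*$,
$$0\to \bigoplus_j R^*(-d'_j)\to \bigoplus_k R^*(-d_k)\to R^*\to \mathcal A^*\to 0,$$
which yields the Hilbert series identity $(1-t)^2H_{\mathcal A^*}(t)=1-\sum_k t^{d_k}+\sum_j t^{d'_j}$. Extracting the first difference gives $h_N-h_{N-1}=1-|\{k:d_k\le N\}|+|\{j:d'_j\le N\}|$, so the desired bound reduces to $|\{k:d_k\le N\}|-|\{j:d'_j\le N\}|\le e=\mu(I)$. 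Although $\mu(I^*)$ may exceed $\mu(I)$, the degree structure of the Hilbert--Burch matrix of $\mathcal A^*$ (in particular the monotonicity $d'_j\ge d_j$ after sorting row and column degrees, together with the observation that any extra generator of $I^*$ arising from an unlifted syzygy of the initial ideal is paired with a compensating extra syzygy of $I^*$ in a higher degree) forces the tighter bound controlled by the local $e=\mu(I)$.

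The main obstacle is precisely this degree bookkeeping: tracking why $|\{k:d_k\le N\}|-|\{j:d'_j\le N\}|$ is controlled by the \emph{local} invariant $\mu(I)$ rather than by $\mu(I^*)$. Each additional minimal generator of $I^*$ beyond the initials of a minimal generating set of $I$ must be paired with an additional minimal syzygy in an appropriately higher degree, so that the excess in $\mu(I^*)$ is exactly cancelled in the telescoping sum, yielding the complete-intersection bound $h_{i-1}-h_i\le 1$ as the special case $e=2$.
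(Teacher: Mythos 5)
Part \eqref{Gorci} of your proposal is correct and essentially the route the paper points to: since $R=K\{x,y\}$ is regular local of dimension $2$ and $\mathcal A$ has depth $0$, Auslander--Buchsbaum gives a minimal free resolution $0\to R^{r}\to R^{e}\to R\to \mathcal A\to 0$ with $r=e-1$ by the rank count, and comparing $\mathrm{Tor}_2^R(\mathcal A,K)$ computed from this resolution with the computation from the Koszul complex on $x,y$ identifies $\dim_K(0:m)=e-1$. This is a complete argument and matches the Hilbert--Burch/minimal-resolution approach the paper cites for the $e=2$ (Gorenstein $\Rightarrow$ CI) case.

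Part \eqref{genii}, however, has a genuine gap. Your Hilbert-series identity correctly reduces the claim to the inequality $\#\{k: d_k\le N\}-\#\{j: d'_j\le N\}\le \mu(I)$ for the graded Betti degrees of $I^*$, but this inequality is literally equivalent to $h_{N-1}-h_N\le e-1$, so the reduction buys nothing until the ``degree bookkeeping'' is actually carried out --- and the mechanism you invoke does not close it. If each extra minimal generator of $I^*$ (beyond the $\mu(I)$ liftable ones) is compensated by an extra syzygy of \emph{strictly higher} degree, then for $N$ between the two degrees the extra generator is counted while its compensating syzygy is not, and the naive count gives only $\mu(I)$ plus a possibly positive excess. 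The paper's own running example shows this is not a phantom worry: for $I=(y^2+x^4,\,xy+x^4)$ one has $\mu(I)=2$ while $I^*=(y^2,xy,x^5)$ has generator degrees $(2,2,5)$ and syzygy degrees $(3,6)$; the extra generator $x^5$ is paired with the syzygy in degree $6$, and at $N=5$ your count is saved only by the unrelated degree-$3$ syzygy, not by the pairing. So the asserted cancellation pattern, even if true as a filtered-deformation statement, does not by itself yield the bound, and you acknowledge that this is exactly the missing step. The paper proves \eqref{genii} by a different and more direct route: it invokes standard (normal, or in arbitrary characteristic ``weak normal'') bases of $I$ in a generic direction, following Brian\c{c}on and \cite[Theorem 4.3]{I1}, where the drops $h_{i-1}-h_i$ for $i\ge \nu(I)$ are controlled directly by the number of generators; the graded complete intersection Hilbert function and the filtration of $\mathcal A^*$ by shifted CIs underlie the $e=2$ case. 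To rescue your approach you would need a precise filtered comparison between minimal generators and syzygies of $I$ and those of $I^*$, together with a genuinely new counting argument; as written, the key inequality is asserted rather than proved.
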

\begin{proof}[Comment on proof] The result \eqref{Gorci} is shown by F.H.S. Macaulay in \cite{Mac1} following earlier articles \cite{Mac0,Sc},
that were incomplete. The case $e=2$ is that an Artinian ring $A$ is Gorenstein of codimension two if and only if $A$ is a complete intersection (CI). The usual proof given now uses the Hilbert Burch theorem about minimal resolutions for $I$ 
(see \cite[Theorem 20.15ff]{E}). \par The result \eqref{genii} appears to be shown for at least $e=2$ in \cite{Mac1}. The general case follows when $\cha K=0$ or $\cha K=p>n$
from considering normal bases (\cite{Br,I1}), or in all characteristics from considering ``weak normal'' bases 
\cite[Theorem 4.3]{I1}.
Underlying the numerical result when $e=2$ is that a \emph{graded}
CI such as
$\,C=R/(x^a,y^b),a\le b$ has Hilbert function
\begin{equation*}
H(C)=(1,2,\ldots ,a,a,\ldots a,a-1,\ldots ,1).
\end{equation*}
When $\mathcal A$ is CI of codimension two then ${\mathcal A}^*$ has a unique filtration by graded
modules whose successive quotients are shifted CI's \cite{I3}.
\end{proof}
\begin{remark}\label{cirem} When $H(\mathcal A)$ satisfies $h_{i-1}-h_i\le
1$ for $i\ge \nu$, then
$P(H)$ has decreasing parts that differ pairwise by at least two. For example, when $H=(1,2,3,4,3,3,2,1), P(H)=(8,6,4,1)$
\end{remark}\par\noindent
The following is the main result of \cite{KO}. Recall that $B=J_P$, the Jordan nilpotent matrix of partition $P$. Recall that $K$ is algebraically closed.
\begin{theorem}(T. Ko{\v{s}}ir and P. Oblak)\label{Gorensteinthm}
Let $A$ be generic in $\mathcal N_B$. Then  $K[A,B]$ is Gorenstein. When $K $ is algebraically closed and $\cha K=0$ or $\cha K>n$ then $Q(P)$ is stable.
\end{theorem}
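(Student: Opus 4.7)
The proof naturally splits into two pieces: first that $K[A,B]$ is Gorenstein for generic $A\in\mathcal N_B$, and second that this forces $Q(P)$ to be stable. My plan is to reduce the second to the first using only results already in the excerpt, then to attack the Gorenstein property.

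Assuming that for generic $A$ the algebra $K[A,B]$ is Gorenstein and hence a codimension-two complete intersection (by Lemma \ref{genboundlem}\eqref{Gorci}), Lemma \ref{genboundlem}\eqref{genii} with $e=2$ yields that the Hilbert function $H=H(K[A,B])$ satisfies $h_{i-1}-h_i\le 1$ for $i\ge\nu(I)$. By Remark \ref{cirem} this makes the parts of $P(H)$ pairwise differ by at least two. For $A$ generic in $\mathcal U_B$, $H$ is the minimum Hilbert function $H_{\min}(P)$ (using the irreducibility of $\mathcal N_B$ from Lemma \ref{nilpBirredlem} together with the specialization rule \eqref{hilbspecialeq}), and Theorem \ref{qphilb} then identifies $P(H_{\min}(P))$ with $Q(P)$ under the hypothesis $\cha K=0$ or $\cha K>n$. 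Thus $Q(P)$ has decreasing parts differing pairwise by at least two, which by Theorem \ref{stablecor} is exactly stability.

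For the Gorenstein assertion, I would exploit that being Gorenstein is equivalent to socle dimension one, and that socle-dimension-$\le 1$ is an open condition on the irreducible variety $\mathcal U_B$. It therefore suffices to exhibit a single $A_0\in\mathcal U_B$ with $K[A_0,B]$ Gorenstein. My candidate comes from the almost-rectangular decomposition $P=P(1)\cup\cdots\cup P(r_P)$ used in Lemma \ref{qbiggerlem} and Theorem \ref{qpthm}: there one constructs $\tilde B\in\mathcal N_B$ of partition $\tilde{\mathcal P}$, and the resulting algebra $K[\tilde B,B]$ ought to decompose compatibly with the string structure into pieces that are visibly complete intersections, after which one checks $\dim_K K[\tilde B,B]=n$. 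An alternative route is via the morphism $\pi:\mathfrak W\to H^{[n]}$ of Lemma \ref{freeactlem} and the density of the CI locus in the irreducible scheme $H^{[n]}$: one would argue that the constructible image of $\mathfrak W_B$ in $H^{[n]}$ meets this open CI locus, so the generic algebra attached to $\mathcal U_B$ is CI.

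The main obstacle is exactly this Gorenstein step: verifying that some explicit $A_0$ simultaneously achieves $\dim_K K[A_0,B]=n$ and one-dimensional socle. The numerical CI condition on the Hilbert function, although suggestive, is only necessary and not sufficient, so genuine ring-theoretic input — either through the Hilbert--Burch structure theorem alluded to in Lemma \ref{genboundlem}, or through a direct analysis of how the string blocks of $B$ interact with a generic element of $\mathcal N_B$ — is required. This is where Ko\v{s}ir and Oblak's argument does work beyond the Hilbert-function analysis carried out earlier in the paper; once the Gorenstein property is in hand, the stability of $Q(P)$ falls out cleanly from the results already assembled above.
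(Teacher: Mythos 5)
Your reduction from ``$K[A,B]$ Gorenstein for generic $A$'' to ``$Q(P)$ stable'' is correct and matches the paper's sketch: codimension-two Gorenstein forces CI by Lemma~\ref{genboundlem}\eqref{Gorci}, CI forces $h_{i-1}-h_i\le 1$ by Lemma~\ref{genboundlem}\eqref{genii}, hence $P(H)$ has parts differing pairwise by at least two by Remark~\ref{cirem}, and via Theorem~\ref{qphilb} (equivalently Theorem~\ref{decreasebthm}) this $P(H)$ is $Q(P)$, so Theorem~\ref{stablecor} gives stability. You have also correctly located the substantive gap: it is the Gorenstein claim.

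However, neither of the two routes you float for that step would succeed. For Route A, the matrix $\tilde{B}$ of Theorem~\ref{qpthm} (chosen so that $\tilde{B}^{s}$ is conjugate to $B$, and then conjugated so that $\tilde{B}^{s}=B$) has $B\in K[\tilde B]$, so $K[\tilde B,B]=K[\tilde B]\cong K[x]/(x^{m})$ where $m$ is the largest part of $\tilde{\mathcal P}$. This is indeed Gorenstein, but $m<n$ whenever $\tilde{\mathcal P}$ has more than one part, so $\tilde B\notin \mathcal U_B$ and the openness argument does not engage. For Route B, the image $\pi(\mathfrak W_B)\subset H^{[n]}$ is an irreducible constructible subset that is in general far from dense (it is cut out by the condition that $m_y$ have Jordan type exactly $P$), so the density of the open CI locus in $H^{[n]}$ gives no information about whether $\pi(\mathfrak W_B)$ meets it; you would essentially be assuming what you want to prove. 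The Ko{\v{s}}ir--Oblak argument that the paper summarizes is structurally different from both: it extends Baranovsky's proof (Lemma~\ref{barnsthm}\eqref{barnsthmii}/Theorem~\ref{Barthm}) that the generic pair $(A,B)$ admits a cyclic vector to show that it also admits a \emph{co}cyclic vector, i.e.\ that the dual module is cyclic, which is the Gorenstein condition; that dual/cocyclicity argument is the genuine new input, and your proposal does not supply a substitute for it.
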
 
\begin{proof}[Proof idea] Their key step is to extend V. Baranovsky's result that $A$ generic in $\mathcal N_B$ implies $K[A,B]$ is
cyclic \cite[Lemma 3]{Bar}, to show that $K[A,B]$ is also cocyclic (Gorenstein). Since height two Gorenstein Artinian algebras are CI (\cite{Mac3}), it follows that
for $A$ generic in $\mathcal N_B$, that $P(H)$ for $H=H(K[A,B])$ has parts that differ pairwise by at least two. They conclude using Theorem~\ref{decreasebthm} and Theorem~\ref{stablecor} that $Q(P)=P(H)$ and is stable.
\end{proof}\par\noindent
\begin{remark} The Ko{\v{s}}ir-Oblak theorem gives an alternative route to the conclusion of the first step in J. Brian\c{c}on's proof of his irreducibility theorem,
in which he ``vertically'' deforms an ideal to a complete intersection (\cite{Br}, see also \cite[p. 81ff]{I1}). Conversely, J. Brian\c{c}on's proof joined with V. Baranovsky's cyclicity result appears to give,
for $K$ algebraically closed of $\cha K=0 $ or $\cha K = p>n$, an alternative, if indirect, approach to the cocylicity step of the Oblak-Ko{\v{s}}ir result, since
\begin{enumerate}[a.] 
\item the vertical deformation preserves the Jordan partition of $m_x$ \cite[(5.2) and \S 5AI.3]{I1};
\item a deformation of a complete intersection remains a CI, and $\mathcal N_B$ is irreducible (Lemma \ref{nilpBirredlem}).
\end{enumerate}
\par J. Brian\c{c}on's proof of the irreducibility of $H^{[n]}$ requires a specific step to deform the CI $(xy, x^p+y^q)$ to an order one ideal. It would be interesting
to know the order $\nu_{Q(P)}$ of $H(Q(P))$ (the diagonal lengths of $Q(P)$) in terms of $P$. This order of $H(Q)$ is just the largest $\nu$ such that 
$Q_i\ge \nu+1-i$ for each $i, 1\le i\le \nu$.  
\end{remark}\par\noindent
{\bf Question.}  What is the
closure of $\mathcal U_{\nu ,n}$ in $H(n,K)$?  (See Corollary \ref{closureUnucor}).
\begin{ack} We thank P.~Oblak and
T. Ko{\v{s}}ir for communicating their result that $Q(P)$ is stable \cite{KO}. We thank J.~Weyman
for helpful discussions.  We thank F. Bergeron and A.~Lauve of UQAM, and K. Dalili 
and S. Faridi of Dalhousie, who organized the January 2007 Fields Mini-Conference, ``Algebraic
Combinatorics Meets Inverse Systems'' at UQAM; this provided a congenial atmosphere, and the opportunity
to present and develop results. At the conference we had stimulating discussions with B.~Sethuraman, who 
also communicated to us the results \cite{Oblak,Oblak2} of P. Oblak. We benefited from the comments of 
G. McNinch, and include an example he provided. We are grateful to the referee for a careful reading and many helpful suggestions
that greatly improved, and sometimes corrected, our exposition.
\end{ack}

\noindent\bigskip
Author e-mail:\smallskip  \par robasili@alice.it\par
a.iarrrobino@neu.edu

\begin{thebibliography}{ACGHOMAA}
\renewcommand{\baselinestretch}{1.7}

\bibitem[Bar]{Bar}
V. Baranovsky: \emph{The variety of pairs of commuting nilpotent matrices is 
irreducible}, Transform. Groups  6  (2001),  no. 1, 3--8.

\bibitem[BaH]{BH}
J. Barria and P. Halmos: \emph{Vector bases for two commuting matrices}, Linear and
Multilinear Algebra {\bf 27} (1990), 147--157.


\bibitem[Bas1]{Bas1}
R. Basili: \emph{On the irreducibility of varieties of commuting matrices}, J. Pure
Appl. Algebra 149(2) (2000), 107--120.

\bibitem[Bas2]{Bas2}
\bysame : \emph{On the irreducibility of commuting varieties of nilpotent  matrices}. 
J.~Algebra  268  (2003),  no. 1, 58--80.

\bibitem[Bas3]{Bas3}
\bysame : Talk \emph{Commuting nilpotent matrices and pairs of partitions},
at the conference "Algebraic Combinatorics Meets Inverse Systems", Montreal, January 19-21, 2007. [Related talk at
conference ``Homological and Combinatorial Aspects in Commutative Algebra", Bustemi, Romania, July, 2007.]

\bibitem[Bas-I]{BI1}
\bysame and A. Iarrobino: \emph{An involution on $\mathcal N_B$, the nilpotent commutator of a nilpotent
Jordan matrix}, preprint, 2007.

\bibitem[Bo]{Bo}
A. Borel : \emph{Linear Algebraic Groups}, W. A. Benjamin, New York (1969).

\bibitem[Br]{Br}
J. Brian\c{c}on: \emph{Description de $\Hilb^n \mathbb C\{ x,y\} $}, Invent.
Math. 41 (1977), no. 1, 45--89.

\bibitem[Ch]{Ch}
J. Cheah: \emph{Cellular decompositions for nested Hilbert schemes of points} 
Pacific J. Math. 183 (1998), no. 1, 39--90. 

\bibitem[CoM]{CoM} 
D. Collingwood, W. McGovern, \emph{Nilpotent orbits in semisimple Lie algebras}, (1993) Van Nostrand Reinhold, New York xii+186.
\bibitem[DB]{NV}

K. DeNaeghel and M. Van den Bergh: \emph{On incidence between strata of the Hilbert scheme of points on 
$\mathbb P^2$}, Math. Z. 255 (2007), 897-922. (ArXiv Math.AG/0503731).

\bibitem[E]{E}
D. Eisenbud : \emph{Commutative algebra}, Grad. Text Math vol. 150 (1995) Springer-Verlag, New York, xvi+785.

\bibitem[ElS]{ElS}
G. Ellingsrud and S.A. Str{\o}mme: \emph{On a cell decomposition of the Hilbert scheme
 of points in the plane},  Invent. Math. 91, 365--370 (1988).

\bibitem[Ge]{Ge}
M. Gerstenhaber: \emph{On dominance and varieties of commuting matrices}, Ann. Math. 73 (1961), 324-348.

\bibitem[G\"{o}]{Go}
L. G\"{o}ttsche : \emph{Betti numbers for the Hilbert function strata of the punctual Hilbert scheme in two variables},
Manuscripta Math. 66 (1990), no. 3, 253--259.

\bibitem[Gr]{Gra}
M. Granger: \emph{G\'{e}om\'{e}trie des sch\'{e}mas de Hilbert ponctuels}  M\'{e}m. Soc.
Math. France (N.S.) No. 8, (1983), 84 pp.


\bibitem[Gu]{Gu}
F. Guerimand: \emph{Sur l'incidence des strates
 de Brill-Noether du sch\'{e}ma de Hilbert des points du plan projectif}, Ph.D. thesis, Universit\'{e} de Nice-Sophia Antipolis, 2002.

\bibitem[Gur]{Gur}
R. Guralnick: \emph{A note on commuting pairs of matrices}, Linear and Multilinear Algebra, 31 (1992), 71--75.

\bibitem[GurSe]{GurSe}
\bysame and B.A. Sethuranam: \emph{Commuting pairs and triples of matrices and related varieties}, Linear Algebra and its Applications 310 (2000),
139--148.

\bibitem[Hes]{H}
W. Hesselink: \emph{Singularities in the nilpotent scheme of a classical group}, Trans. A.M.S.
222 (1976), 1-32.

\bibitem[Hui]{Hui}
M. Huibregtse: \emph{On Ellingsrud and Str{\o}mme's cell decomposition of ${\rm Hilb}\sp n\sb {A\sp 2\sb C}$.}
Invent. Math. 160 (2005), no. 1, 165--172. 
 
\bibitem[I1]{I1.5}
A. Iarrobino: \emph{An algebraic fibre bundle over $P^1$ that is not a vector 
    bundle,} Topology 12 (1973), 229--232.

\bibitem[I2]{I1} 
\bysame:\emph{Punctual Hilbert Schemes}, 
Mem. Amer. Math. Soc. 10 (1977), no. 188, viii+112 pp.

\bibitem[I3]{I3}
\bysame : \emph{Associated Graded Algebra of a Gorenstein Artin Algebra}, Memoirs Amer.
Math. Society, Vol 107 \#514, (1994), Amer. Math. Soc., Providence.

\bibitem[I4]{I4}
\bysame : \emph{Pairs of commuting nilpotent matrices and Hilbert functions}, 48p. Extended notes
from a talk at January 07 Fields Workshop:  ``Algebraic combinatorics meets inverse systems''. (March, 2007).
[\emph{Describes work joint with R. Basili]. Posted at {http://www.math.neu.edu/$_{\text{\~{}}}$iarrobino/TalkMontreal.pdf }}.

\bibitem[IY]{IY}
\bysame and J. Yam\'{e}ogo: \emph{The family $G_T$ of graded
Artinian quotients $G_T$ of $k[x,y]$ of given Hilbert function}, Communications in
Algebra, Vol. 31 \# 8,  3863--3916 (2003). \quad
(ArXiv alg-geom/9709021)

\bibitem[KW]{KW}
A. King and C. Walter \emph{On Chow rings of fine moduli spaces of modules},
J. Reine Angew. Math. 461 (1995), 179--187.

\bibitem[KoOb]{KO}
T. Ko{\v{s}}ir and P. Oblak: \emph{A note on commuting pairs of nilpotent matrices},
preprint, 2007. ArXiv Math.AC/0712.2813.

\bibitem[LST]{LST}
D. Laksov, R. Skjelnes, and A. Thorup: \emph{Norms on rings and the Hilbert scheme of points on the line},
Quarterly J. Math., 56 (2005) no. 3, 367--375.

\bibitem[Mac1]{Mac0}
F.H.S. Macaulay: \emph{The Theorem of Residuation, being a general treatment of the Intersections of the Plane Curves at Multiple Points }
Proc. London Math. Soc. , vol. 31 (1899),
pp. 381--423.

\bibitem[Mac2]{Mac1}
\bysame : \emph{On a method of dealing with the intersections of plane curves}.
Trans. Amer. Math. Soc. 5 (1904), no. 4, 385--410.

\bibitem[Mac3]{Mac3}
\bysame : \emph{The Algebraic Theory of Modular Systems}, Cumbridge Univ. Press, Cambridge, U.K. (1916), reprinted with a foreword by P. Roberts,
Cambridge Univ. Press, London and New York (1994).

\bibitem[McN]{McN}
G. McNinch : \emph{On the centralizer of the sum of commuting nilpotent elements}, J.~Pure and Applied Alg. 206 (2006) \# 1-2, 123--140.

\bibitem[Nak]{Nak}
H. Nakajima; \emph{Lectures on Hilbert schemes of points on surfaces}, University Lecture Series, 18.
 American Mathematical Society, Providence, RI, 1999. xii+132 pp. ISBN: 0-8218-1956-9.
 
\bibitem[NeuSa]{NS}
M. Neubauer and D. Saltman: \emph{Two-generated commutative subalgebras of $M_n{F}$},
J.~Algebra {\bf 164} (1994), 545--562.

\bibitem[NeuSe]{NSe} 
M. Neubauer and B.A. Sethuraman: \emph{Commuting pairs in the centralizers of
2-regular matrices}, J. Algebra 214 (1999), 174--181.


\bibitem[Ob1]{Oblak}
P. Oblak: \emph{The upper bound for the index of nilpotency for a matrix commuting with a
given nilpotent matrix},  Linear and
Multilinear Algebra (electronically published 9/2007). (ArXiv: math.AC/0701561).

\bibitem[Ob2]{Oblak2}
\bysame : \emph{Jordan forms for mutually annihilating nilpotent pairs}, Linear Algebra and Applications, 428, no. 7 (2008), 1476--1491. (ArXiv: math.AC/0612300)

\bibitem[Pan]{Pan}
D. I. Panyushev: \emph{Two results on centralisers of nilpotent elements},
J. Pure and Applied Algebra, 212 no. 4 (2008), 774--779.

\bibitem[Prem]{Prem}
A. Premet: \emph{Nilpotent commuting varieties of reductive Lie algebras}, Invent.
Math. 154 (2003), no. 3, 653--683.


\bibitem[Scott]{Sc}
C.A. Scott: \emph{on a recent method for dealing with the intersections of plane
curves}, Trans. AMS 3 (1902), 216-283.


\bibitem[TuAi]{TA}
H.W. Turnbull, A.C. Aitken: \emph{An introduction to the theory of canonical matrices}
Dover, New York, 1961.


\bibitem[Yam1]{Yama}
J. Yam\'{e}ogo: \emph{D\'{e}composition cellulaire des vari\'{e}t\'{e}s param\'{e}trant
des id\'{e}aux homog\`{e}nes de ${\mathbb C}[[x,y]]$: Incidence des cellules I}, Compositio
Math. 90 (1994): 81--98.

\bibitem[Yam2]{Yamb}
\bysame : \emph{D\'{e}composition cellulaire des vari\'{e}t\'{e}s param\'{e}trant
des id\'{e}aux homog\`{e}nes de ${\mathbb C}[[x,y]]$: Incidence des cellules II}, 
J. Reine Angew. Math. 450 (1994), 123--137.

\end{thebibliography}
\end{document}